\documentclass[a4paper]{article}
\usepackage[margin=3.3cm]{geometry}

\usepackage[T1]{fontenc}
\usepackage[english]{babel}

\usepackage{amsmath}
\usepackage{amssymb}
\usepackage{amsthm}

\usepackage{bbm}
\usepackage{mathrsfs}

\usepackage[disable]{todonotes}

\setcounter{secnumdepth}{1}

\usepackage{titlesec}
\titleformat*{\subsection}{\bfseries}

\usepackage{mathtools}
\mathtoolsset{showonlyrefs}





\usepackage{enumitem}
\setenumerate{label=(\roman*)}

\usepackage{csquotes}

\usepackage{hyperref}

\usepackage{tikz-cd}



\newtheorem{lemma}{Lemma}[section]
\newtheorem{theorem}[lemma]{Theorem}
\newtheorem{proposition}[lemma]{Proposition}
\newtheorem{corollary}[lemma]{Corollary}
\theoremstyle{definition}
\newtheorem{definition}[lemma]{Definition}

\newtheorem{remark}[lemma]{Remark}

\newcommand{\integers}{\mathbb{Z}}
\newcommand{\reals}{\mathbb{R}}

\newcommand{\naturals}{\mathbb{N}}

\newcommand{\gL}{\mathrm{L}}
\DeclareMathOperator*{\essup}{\mathrm{ess-sup}}

\newcommand{\gC}{\mathrm{C}}


\newcommand{\Id}{\mathrm{Id}}
\newcommand{\clos}{\mathrm{clos}}

\newcommand{\skalarprodukt}[2]{\left\langle #1 \, {,} \, #2 \right\rangle}
\newcommand{\bild}{\text{Im}}
\newcommand{\kernn}{\text{Ker}}


\DeclareMathOperator{\spann}{\mathrm{span}}



\newcommand{\gd}{\mathrm{d}}







\newcommand{\dreistrichnorm}{|\kern-0.25ex|\kern-0.25ex|}



\newcommand{\labelirgendwo}[1]{\phantomsection\label{#1}}
\newcommand{\refirgendwo}[2]{\hyperref[#1]{#2}}

\definecolor{greenyellow}{HTML}{8CC53C}
\setuptodonotes{color=orange}


\newcommand{\kommentar}[1]{\todo[inline,color=greenyellow
	]{#1}}
\newcommand{\anmerkung}[1]{\todo[color=greenyellow,size=\tiny]{#1}}


\newcommand{\ada}[1]{\emph{(#1).}} 

\author{
	Michael Bitzer and Ingo Steinwart\\
	University of Stuttgart\\
	Faculty 8: Mathematics and Physics\\
	Institute for Stochastics and Applications\\
	D-70569 Stuttgart Germany \\
	\texttt{\small michael.bitzer@mathematik.uni-stuttgart.de}\\
	\texttt{\small ingo.steinwart@mathematik.uni-stuttgart.de}
} 

\title{Spectral representations of interpolation spaces of reproducing kernel Hilbert spaces}

\begin{document}
	\frenchspacing
	\maketitle

	\begin{abstract}
		In statistical learning theory, interpolation spaces of the form $[\gL^2,H]_{\theta,r}$, where $H$ is a reproducing kernel Hilbert space, are in widespread use. So far, however, they are only well understood for fine index $r=2$. We generalise existing results from $r=2$ to all possible values of $r$. In particular, we present a spectral decomposition of such spaces, analyse their embedding properties, and describe connections to the theory of Banach spaces of functions. 
		Additionally, we present example applications of our results to regularisation error estimation in statistical learning. 
	\end{abstract}
	
	\textbf{Mathematics Subject Classification (2020).} Primary 46E22; 
	Secondary 46B70, 
	47B34, 
	47G10, 
	47B32, 
	47A70, 
	68T05. 
	
	\textbf{Key Words.}  Reproducing Kernel Hilbert spaces, Interpolation spaces, Integral operators, Eigenvalues, Statistical learning theory.
	
	\section{Introduction}

\todo[inline,color=cyan]{Es gibt ein paar neue Absätze und neue Literatur in der Einleitung. Fehlt noch wichtige Literatur? }
Since their introduction, reproducing kernel Hilbert spaces (RKHS) have proven to be fundamental objects in various branches of mathematical theory and in applications. 
They possess many desirable properties, as they combine the geometry of a Hilbert space with the continuous evaluation functionals of a function space. 
Apart from applications in functional analysis and numerics, statistical learning theory takes advantage of this situation by considering a large variety of kernel-based algorithms. 

Recent years showed that in the statistical analysis of such learning algorithms, certain interpolation spaces play a key role. To be more precise, for an RKHS $H$, spaces of the form  
\begin{align} \label{formula interpolation space fine index 2}
	[\gL^2(\nu), H]_{\theta,2} 
\end{align}
are important for the analysis of kernel-based least squares regression, see e.g. \cite{sobolev-learning-rates}, for kernel interpolation, see e.g. \cite{tizian_superconvergence}, for estimations of the conditional mean embedding, see e.g. \cite{regularized-conditional-mean-embedding}, and for the analysis of learning curves, see e.g. \cite{learning_curves}. 
Similarly, these spaces occur naturally for certain questions related to Gaussian processes \cite{KLE} \cite{karvonen} \cite{Gauss_disintegration} \cite{GP_paths_RKHS}, 
which recently have drawn interest due to their connection to neural networks \cite{NTK}. Thereby, the associated kernels are strongly related to Sobolev spaces \cite{Max_NTK}, which are an important application of this paper. 

The main reason for choosing $2$ as the fine index in \eqref{formula interpolation space fine index 2} is the spectral representation of these interpolation spaces. More precisely, the spaces \eqref{formula interpolation space fine index 2} are given as the range of powers of the integral operator associated to the RKHS \cite[Section 4]{general_mercer}. Using eigenvalues and eigenfunctions of this operator hence yields many insights into interpolation spaces and is very helpful in statistical learning theory \cite{svm} \cite{GP_RKHS_survey}. 


Aside from these practical advantages of fine index $2$, considering other values turned out as beneficial. In particular, the following two cases are of special interest.  
\begin{itemize}
	\item In certain boundary cases, interpolation spaces of fine index $1$ enable embeddings into $\gL^\infty(\nu)$ that are not possible for other fine indexes. For examples and applications of such embeddings, see e.g. \cite[p. 4]{optimal-rates-least-squares}. 
	\item Interpolation spaces of fine index $\infty$ allow for a precise analysis of approximation errors, see e.g. \cite[Theorem 3.1]{approximation_error_interpolation_spaces}. 
\end{itemize}
For these reasons, we consider real interpolation spaces of the general form 
\begin{align} \label{formula interpolation space}
	[\gL^2(\nu), H]_{\theta,r} \, \text{,}
\end{align}
that is, we allow for all possible fine indexes $1 \leq r \leq \infty$. 
Our aim is to present methods and results that treat such interpolation spaces for all fine indexes $r$ in a joint fashion. The results we will derive address the following main questions: 
\begin{itemize}
	\item Do the spaces \eqref{formula interpolation space} and their norms still possess a spectral representation? 
	\item The interpolation spaces consist of equivalence classes of functions. When can we identify the spaces \eqref{formula interpolation space} with Banach spaces consisting of functions? 
	\item When are the spaces \eqref{formula interpolation space} continuously embedded into $\gL^\infty(\nu)$? 
\end{itemize}
The first question is clearly motivated by the practicality of spectral representations, as indicated by the papers mentioned above. 
Positive answers to the second question have applications in all situations where a point evaluation of functions is necessary. A main example is given by the theory of Gaussian processes in the context of machine learning, as paths cannot be equivalence classes but need to be defined everywhere, see e.g. \cite[Assumption 1]{Gaussian_processes_physics}, or by the analysis of function spaces generated by neural networks, see e.g. \cite{RKBS-neural-networks}. 
The third question will turn out to be tightly connected to the second one, but applications go beyond that. In particular, the statistical analysis of kernel-based learning algorithms relies on such embeddings, see e.g. \cite[Theorem 7.23]{svm} \cite[Section 3]{sobolev-learning-rates} \cite[p. 4]{optimal-rates-least-squares} \cite[Section 4]{vector_valued_learning_rates}. 
Our results on the second and third question improve earlier findings such as \cite[Section 5]{KLE} \cite[Section 5]{general_mercer} even in the Hilbert space case $r=2$. 


This work is organised as follows: we start by introducing the necessary notation and fundamental definitions in Section \ref{section preliminaries}. Then the main results are provided in Section \ref{section results}. In Section \ref{section application RERM}, we present an example application to error estimation in statistical learning theory. All proofs and the ideas behind the results will be given in Section \ref{section proofs}. Finally, some auxiliary results can be found in the three appendices. 
%
%
	
	\section{Preliminaries} \label{section preliminaries}

Given normed $\reals$-vector spaces $E_1$ and $E_2$, we use $E_1 \simeq E_2$ to denote isometric isomorphism, $E_1 \triangleq E_2$ for norm equivalent spaces and $E_1 \hookrightarrow E_2$ if $E_1 \subset E_2$ and the embedding is continuous.

For a measurable space $(X, \mathcal{A})$, we denote the set of measurable functions $X \rightarrow \reals$ by $\mathcal{L}^{0} (X)$. 
The set of bounded functions in $\mathcal{L}^{0} (X)$ is denoted by $\mathcal{L}^{\infty} (X)$, and this space is equipped with the supremum norm. 

Let us assume that additionally, a measure $\nu$ is given on $X$. Then for $1 \leq p < \infty$, we consider the space $\mathcal{L}^{p} (\nu)$ of all functions in $\mathcal{L}^{0} (X)$ with finite seminorm $\Vert f \Vert_{\mathcal{L}^p(\nu)} := (\int_X |f|^p \gd \nu)^\frac{1}{p}$. 
Similarly, $\mathcal{L}^{\infty}(\nu)$ denotes the space of essentially bounded functions in $\mathcal{L}^0 (X)$ equipped with the seminorm $\Vert f \Vert_{\mathcal{L}^{\infty}(\nu)} := \essup_{x \in X} |f(x)|$. \anmerkung{Beachte Unterscheidung zwischen $\mathcal{L}^{\infty}(\nu)$ und $\mathcal{L}^{\infty} (X)$}
Furthermore, we consider equivalence classes $[f]_{\sim}$ of functions that are $\nu$-almost everywhere equal to $f \in \mathcal{L}^{0} (X)$. 
For $E \subset \mathcal{L}^{0} (X)$, we denote the set of all equivalence classes of functions in $E$ as $[E]_{\sim}$. 
In particular, for $1 \leq p \leq \infty$, $\gL^p(\nu) := [\mathcal{L}^{p} (\nu)]_{\sim}$ is equipped with the classical $\gL^p$-norm inherited from $\mathcal{L}^{p} (\nu)$. 
These spaces are Banach spaces. 
For such indices $p$, we extend the typical arithmetic by $\frac{1}{\infty}:=0$. 
If $\nu$ is fixed and if $w: X \mapsto [0, \infty)$ is a measurable \enquote{weight} function, we denote the $\gL^p$-space with respect to the measure $w d\nu$ by $\gL^p(w)$. For the counting measure $\nu$, the same applies to $\ell^p(w)$. 



\subsection{Interpolation spaces} \labelirgendwo{subsection interpolation spaces}
Let us introduce some background from the theory of interpolation spaces of the real interpolation method. For more details, see \cite[Chapter 5]{bennet_sharpley} or \cite[Chapter 3]{bergh_loefstroem}. 
In the following, we only consider interpolation spaces of Banach spaces $E_1$ and $E_2$ where $E_2 \hookrightarrow E_1$. 

To define the real interpolation spaces, we use the so-called $K$-method. To this end, for all $x \in E_1$ and $t>0$, we define the $K$-functional as 
\begin{align} \label{formula K-functional}
	K(x,t,E_1,E_2) := 
	\inf_{ y \in E_2 } \left( \Vert x-y \Vert_{E_1} + t \Vert y \Vert_{E_2} \right) \text{.}
\end{align}\anmerkung{Äquivalenz zur allgemeinen Definition siehe \cite[nach Prop 4.5]{general_mercer}}
For $x \in E_1$ and the parameters $0<\theta<1$ and $1\leq r < \infty$, this enables us to set  
\begin{align}
	\Vert x \Vert_{[E_1,E_2]_{\theta,r}} 
	:= \left( \int_0^{\infty} t^{-\theta r -1} K^r(x,t, E_1, E_2) \gd t \right)^{\frac{1}{r}}
\end{align}
and similarly, for $r = \infty$, we let  
\begin{align}
	\Vert x \Vert_{[E_1,E_2]_{\theta,\infty}} := \sup_{t>0} \left( t^{-\theta} K(x,t, E_1, E_2) \right) \text{.}
\end{align}
Now the interpolation space of $E_1$ and $E_2$ to the parameters $0<\theta<1$ and $1\leq r \leq \infty$ is defined as  
\begin{align}
	[E_1,E_2]_{\theta,r} := \{ x \in E_1 \, : \, \Vert x \Vert_{[E_1,E_2]_{\theta,r}} < \infty\} 
\end{align}
and this space is equipped with the $\Vert \, \cdot \, \Vert_{[E_1,E_2]_{\theta,r}}$-norm. 
These spaces are again Banach spaces, see \cite[Proposition 1.8 in Chapter 5]{bennet_sharpley}. 
In fact, such interpolation spaces perform a kind of interpolation of Banach spaces, as they are \enquote{in between} $E_1$ and $E_2$ in the sense that $E_2 \hookrightarrow [E_1,E_2]_{\theta,r} \hookrightarrow E_1$, see Lemma \ref{lemma interpolation space subsets}. 	

Note that the definition of real interpolation spaces presented here is just one of several alternatives. Other ways, such as the $J$-method, yield equivalent norms. 
In certain situations, such equivalent norms can be helpful to gain insight into interpolation spaces. 

\subsection{Reproducing kernel Hilbert spaces and Banach spaces of functions} \labelirgendwo{subsection RKHS}
Let $X$ be a set and suppose that $E$ is a Banach space that consists of functions $f: X \rightarrow \reals$\anmerkung{mit Vektorraumstruktur von Funktionen}. If for all $x \in X$, the evaluation maps 
\begin{align}
	\delta_x: E \rightarrow \reals, \quad f \mapsto f(x)
\end{align}
are bounded functionals on $E$, then $E$ is called a Banach space of functions (BSF), see  \cite{RKBS} for more details. 

If additionally, the norm of such a Banach space of functions $H$ is induced by an inner product, for each $x \in X$, we can find a function $k(\cdot,x) \in H$ that is associated to $\delta_x$ by the Riesz representation theorem. Then $H$ is a reproducing kernel Hilbert space (RKHS) on $X$ with kernel $k$. We refer to \cite[Chapter 4]{svm} for fundamental results on RKHS. 

Now let $(X, \mathcal{A}, \mu)$ be a measure space. 
Suppose that $H$ is an RKHS of a measurable and bounded kernel $k$ on $X$ such that $H$ is compactly embedded into $\gL^2(\nu)$, that is the \enquote{embedding} $I_k: H \rightarrow \gL^2(\nu)$ given by $f \mapsto[f]_{\sim}$ is a compact operator. 
In this case, we consider the self-adjoint, positive and compact operator $T_{k} := I_{k} I_{k}^*$, called the integral operator of $k$, as it is given by 
\begin{align}
	T_k: \gL^2(\nu) \rightarrow \gL^2(\nu), \quad f \mapsto \left[ \int_X k(\cdot,y) f(y) \gd \nu (y) \right]_{\sim} \text{,}
\end{align}
see e.g. \cite[Lemma 2.2]{general_mercer}. 
Let us denote the non-zero eigenvalues of $T_k$ in non-increasing order by $\mu=(\mu_i)_{i \in I}$, where $I = \{1,...,n\}$ or $I = \naturals$. 
Due to the positivity and compactness of $T_k$, we know that $\mu_i > 0$ and, in case that $I$ is infinite, $\lim_{i \rightarrow \infty} \mu_i = 0$. 

As proven in \cite[Lemma 2.12]{general_mercer},\anmerkung{Dass die $e_i$ messbar sind steht da nur implizit} in such a situation, there are functions $e_i \in H$ such that  $([e_i]_{\sim})_{i \in I} \subset \gL^2(\nu)$ forms an ONS in $\gL^2(\nu)$ consisting of eigenvectors of the integral operator $T_k$, such that additionally, $(\sqrt{\mu_i} e_i)_{i \in I}$ forms an ONS inside $H$. 

The range of $I_k$, that is $[H]_{\sim}$, becomes a Hilbert space by using the quotient norm $\Vert [f]_{\sim} \Vert := \inf \{ \Vert g \Vert_H \, | \, I_k g = [f]_{\sim}  \}$. 

\todo[inline,color=cyan]{Ab hier kleinere Veränderungen: }

We summarise the above in the following assumption: 
\vspace{2mm} \newline  \noindent \textbf{Assumption H. } \labelirgendwo{assumption RKHS}
	Suppose that $(X, \mathcal{A}, \nu)$ is a measure space and that $H$ is an RKHS of a measurable kernel $k$ on $X$ such that $H$ is compactly embedded into $\gL^2(\nu)$. Then, for a suitable index set $I=\{1,...,n\}$ or $I=\naturals$, we denote the eigenvalues of $T_k:\gL^2(\nu) \rightarrow \gL^2(\nu)$ by $\mu_i$ and the associated eigenfunctions by $e_i$ for $i \in I$. 
	
	\vspace{2mm}  \noindent \textbf{Assumption H+. } \labelirgendwo{assumption RKHS strong}
	Let Assumption \refirgendwo{assumption RKHS}{H} be satisfied. In addition, let $\nu$ be $\sigma$-finite. Furthermore, assume that $k$ is a bounded kernel.  
\vspace{2mm}

Let us recall two concepts from topology and measure theory. Firstly, let $X$ be a set and suppose that $M$ is a set of functions $X \rightarrow \reals$. Then the coarsest topology $\tau(M)$ on $X$ such that all functions in $M$ are continuous is called the initial topology on $X$ with respect to $M$. 
Secondly, let $(X,\mathcal{A})$ be a measurable space and $(X,\mathcal{T})$ a topological space. Then a measure $\nu$ on $(X,\mathcal{A})$ is called $\mathcal{T}$-positive, if  $\mathcal{T} \subset \mathcal{A}$, i.e. each open set is measurable, and if for all non-empty open sets $A$, it holds that $\nu(A)>0$. 

\vspace{2mm} \noindent \textbf{Assumption T. } \labelirgendwo{assumption T}
Let $(X, \mathcal{A}, \nu)$ be a measure space such that $\gL^2(\nu)$ is separable. 
Moreover, let $H$ be an RKHS over $X$ of a measurable kernel $k$ such that Assumption \refirgendwo{assumption RKHS}{H} is satisfied and $\nu$ is $\tau(H)$-positive. We use $I$, $e_i$ and $\mu_i$ as in Assumption \refirgendwo{assumption RKHS}{H}.

\begin{remark}
	We give a sufficient condition that allows to verify Assumption \refirgendwo{assumption T}{T} in the most common situations. 
	To this end, suppose a topology $\mathcal{T}$ on $X$ is given such that 
	\begin{enumerate}
		\item $\mathcal{T} \subset \mathcal{A}$, i.e. $\mathcal{A}$ contains the Borel-$\sigma$-algebra of $\mathcal{T}$, 
		\item $\nu$ is $\mathcal{T}$-positive, and 
		\item $k$ is $\mathcal{T} \otimes \mathcal{T}$-continuous. 
	\end{enumerate}
	Then, due to (iii), all functions in $H$ are $\mathcal{T}$-continuous and hence $\tau(H) \subset \mathcal{T}$. By (ii), it follows that all non-empty open sets in $\tau(H)$ have positive measure, so $\nu$ is $\tau(H)$-positive as well. Hence $\tau(H) \subset \mathcal{A}$ is $\nu$-positive. 
	Note however that the separability of $\gL^2(\nu)$ has to be checked separately. 
	
	In particular, assume that $X$ is a subset of $\reals^n$ that satisfies $\mathrm{clos}(\mathrm{int}(X)) = \mathrm{clos}(X)$, where $\mathrm{clos}$ and $\mathrm{int}$ denote the closure and the interior, respectively.\anmerkung{Beweis: (i), (iii), Separabilität ok, (ii): Sei $\lambda$ Lebesgue-Maß, sei $\{\}\neq A\subset \reals^n$ offen. Dann ist $\nu(A\cap X) = \lambda(A \cap X) > 0$ da $A \cap \mathrm{int}(X) \neq \{\}$ offen ist. Es ist $\neq \{\}$, da sich offene Menge $A$ mit $\clos X=\clos \mathrm{int} X$ schneidet $\Leftrightarrow$ $A$ schneidet sich mit $\mathrm{int} X$ nach Def. von $\clos$} Then Assumption \refirgendwo{assumption T}{T} is satisfied if we use the Borel-$\sigma$-algebra and the Lebesgue measure with a continuous kernel on $X$. 
	More generally, we may replace $\reals^n$ by a second-countable locally compact group equipped with the Borel-$\sigma$-algebra and the Haar measure. \anmerkung{zweitabzählbar für Separabilität, siehe mathoverflow 42310, allgemeiner: Cohn Prop 3.4.5}
\end{remark}

\todo[inline,color=cyan]{Bis hier kleinere Veränderungen }

\subsection{Power spaces} \label{subsection power spaces}
Our goal is to describe the interpolation spaces from \eqref{formula interpolation space} together with their norms by a spectral representation, i.e. in terms of eigenvalues and eigenfunctions of the operator $T_k$. 
To this end, for a \enquote{weight sequence} $\mu:I \rightarrow (0,\infty)$, we consider the following partition of $I$: 
\begin{align} \label{formula def Mj}
	M_j:= \left\{i \in I \, \middle| \, 2^j < \mu_i \leq 2^{j+1} \right\}, \quad j \in \integers \text{.}
\end{align}


\begin{definition} \label{def ell spaces}
	Let $(\mu_i)_{i \in I}$ be a non-increasing sequence of positive numbers and $b = (b_i)_{i \in I}$ be a sequence of real numbers. Then for $\theta>0$\anmerkung{Hier wird zwar nur $\theta \leq 1 $ gebraucht, aber man kann an sich auch höhere $\theta$ betrachten wie in \cite[Def. 4.1]{general_mercer}} and $1\leq r \leq \infty$, let 
	\begin{align} \label{formula power space norm inner part}
		c_j(b) := \left( \sum_{i \in M_j} b_i^2 \mu_i^{-\theta} \right)^{\frac{1}{2}}
	\end{align}
	and 
	\begin{align} \label{formula power space norm}
		\Vert b \Vert_{(\mu,\theta,r)} 
		:= \Vert (c_j(b))_{j \in \integers} \Vert_{\ell^r} \text{.}
	\end{align}
	Furthermore, let $\ell^{(\mu,\theta,r)} := \left\{ b \, : \, \Vert b \Vert_{(\mu,\theta,r)} < \infty \right\}$ be equipped with the $\Vert \, \cdot \, \Vert_{(\mu,\theta,r)}$-norm. 
\end{definition}
It is not hard to see that, as the $\mu_i$ are positive, $\Vert \cdot \Vert_{(\mu,\theta,r)}$ is in fact a norm on $\ell^{(\mu,\theta,r)}$. The choice of powers of $2$ in \eqref{formula def Mj} is for notational convenience. Replacing it by any $s>1$ yields equivalent norms.\anmerkung{Siehe alten Beweis mit $s$} It will turn out\anmerkung{\refirgendwo{proof ell spaces Banach}{later}} that $\ell^{(\mu,\theta,r)}$ is actually a Banach space, see Remark \ref{remark isometric isomorphisms}. 
Note that in the case $r=2$, one finds that $\ell^{(\mu,\theta,r)} = \ell^2 (\mu^{-\theta})$ with equal norms. In consequence, the following definition generalises the notion of a power of an RKHS from \cite[Section 4]{general_mercer}. 

\begin{definition} \label{def power space}
	Let $H$, $e_i$ and $\mu_i$ be as introduced in Assumption \refirgendwo{assumption RKHS}{H}. 
	Then the power space of $H$ to the parameters $0<\theta<1$ and $1 \leq r \leq \infty$ is defined as 
	\begin{align}
		[H]_\sim^{(\theta,r)} 
		:= \left\{ f \in \gL^2(\nu) \, \middle\vert \, \exists b \in \ell^{(\mu,\theta,r)} \text{ with } f = \sum_{i \in I} b_i [e_i]_{\sim} \right\}
	\end{align}
	with norm given by 
	\begin{align}
		\left\Vert \sum_{i \in I} b_i [e_i]_{\sim} \right\Vert_{[H]_\sim^{(\theta,r)}} := \Vert b \Vert_{(\mu, \theta, r)} \text{.}
	\end{align}
\end{definition}

Note that this norm is well-defined due to the uniqueness of the Fourier coefficients $b_i$. 
Recall that in \cite[Section 4]{general_mercer}, only the spaces $[H]_\sim^{(\theta,2)}$ were introduced and investigated. Moreover, for $r \neq 2$, the spaces $[H]_\sim^{(\theta,r)}$ are no longer Hilbert spaces. 


	\section{Results} \label{section results}

\todo[inline,color=cyan]{Ab hier kleinere Veränderungen und neue Bemerkungen: }

Our main results are provided in the following. The first result states the norm equivalence of the power spaces $[H]_\sim^{\theta,r}$ to the interpolation spaces of the form \eqref{formula interpolation space}, as previously mentioned. 
This is the foundation for the subsequent two results, which show that we can view these interpolation spaces as BSFs, in case they are embedded into $\gL^\infty(\nu)$. 
The last result then analyses in which situations such an embedding exists. 
\begin{theorem}\label{thm equivalent norm}
	Let $0 < \theta < 1$, $1 \leq r \leq \infty$, and suppose that $H$ is as in Assumption \refirgendwo{assumption RKHS}{H}. Then 
	\begin{align}
		[H]_\sim^{(\theta,r)} = [\gL^2(\nu), [H]_{\sim}]_{(\theta,r)}
	\end{align}
	and the norms on these spaces are equivalent. 
\end{theorem}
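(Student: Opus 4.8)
The plan is to transport the whole problem to sequence spaces via the Fourier coefficients with respect to the orthonormal system $([e_i]_\sim)_{i\in I}$ and to compute the interpolation norm there. Set $V:=\overline{\spann}\{[e_i]_\sim : i\in I\}\subseteq\gL^2(\nu)$. First I would check that $f\mapsto(\skalarprodukt{f}{[e_i]_\sim})_{i\in I}$ identifies $V$ isometrically with $\ell^2$ and, using that $(\sqrt{\mu_i}\,e_i)_{i\in I}$ is an ONS in $H$ together with the quotient-norm description of $[H]_\sim$, identifies $[H]_\sim$ isometrically with $\ell^2(\mu^{-1})$, i.e.
\begin{align}
	\Bigl\Vert \textstyle\sum_{i} b_i[e_i]_\sim \Bigr\Vert_{[H]_\sim}^2 = \sum_i b_i^2\,\mu_i^{-1}.
\end{align}
Since $I_k$ is compact we have $[H]_\sim\hookrightarrow\gL^2(\nu)$, so the $K$-method applies to the couple $(\gL^2(\nu),[H]_\sim)$; this matches the known case $r=2$, where $[\gL^2(\nu),[H]_\sim]_{\theta,2}=H_\nu^{(\theta,2)}=\ell^2(\mu^{-\theta})$ as in \cite[Section 4]{general_mercer}.

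Next I would compute the $K$-functional explicitly. For $x=\sum_i b_i[e_i]_\sim\in V$ and any $y=\sum_i a_i[e_i]_\sim\in[H]_\sim$ one has $\Vert x-y\Vert_{\gL^2(\nu)}^2 + t^2\Vert y\Vert_{[H]_\sim}^2 = \sum_i\bigl[(b_i-a_i)^2 + t^2\mu_i^{-1}a_i^2\bigr]$, which is minimised coordinatewise at $a_i=b_i/(1+t^2\mu_i^{-1})$ with value $G(t)^2:=\sum_i \tfrac{t^2\mu_i^{-1}}{1+t^2\mu_i^{-1}}b_i^2$. Combining the two terms via $\alpha+\beta\geq\sqrt{\alpha^2+\beta^2}$ (for the lower bound, over all $y$) and $\alpha+\beta\leq\sqrt2\sqrt{\alpha^2+\beta^2}$ (for the upper bound, at the minimiser, which lies in $[H]_\sim$) yields
\begin{align}
	G(t)\leq K(x,t,\gL^2(\nu),[H]_\sim)\leq \sqrt2\,G(t).
\end{align}
A function with nonzero component in $V^\perp=\kernn T_k$ keeps $K(x,t)$ bounded below as $t\to0$, which forces an infinite interpolation norm; hence $[\gL^2(\nu),[H]_\sim]_{\theta,r}$, like $H_\nu^{(\theta,r)}$, consists exactly of elements of $V$, and it remains to match the norms.

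The remaining task is to show $\int_0^\infty t^{-\theta r-1}G(t)^r\,\gd t\asymp\Vert b\Vert_{(\mu,\theta,r)}^r$. Here I would group the indices into the blocks $M_j$ from \eqref{formula def Mj} with $w(i)=\mu_i^{-\theta}$, on which $\mu_i^{-1}$ is dyadically constant, $\mu_i^{-1}\asymp\Lambda_j:=2^{j/\theta}$. Writing $B_j^2:=\sum_{i\in M_j}b_i^2$ gives $c_j(b)^2\asymp 2^{j}B_j^2=\Lambda_j^{\theta}B_j^2$, so $\Vert b\Vert_{(\mu,\theta,r)}^r\asymp\sum_j 2^{jr/2}B_j^r$, and $G(t)^2\asymp\sum_j\min(1,t^2\Lambda_j)B_j^2$. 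Since $G$ is doubling in $t$, discretising the integral over dyadic intervals $[2^m,2^{m+1})$ reduces the claim to a discrete inequality of the shape
\begin{align}
	\sum_{m\in\integers}2^{-m\theta r}\Bigl(\sum_{j\,:\,2^{2m}\Lambda_j\geq1}B_j^2\Bigr)^{\!r/2}\asymp\sum_{j\in\integers}2^{jr/2}B_j^r,
\end{align}
i.e. a weighted discrete Hardy inequality comparing tail sums to individual terms.

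I expect this last discrete step to be the main obstacle. The lower bound ($\gtrsim$) is immediate, since keeping only the boundary term $j\asymp -2m\theta$ already recovers $2^{jr/2}B_j^r$. The upper bound is the genuine Hardy estimate and is where $r\neq2$ matters: for $1\leq r\leq2$ the exponent $r/2\leq1$ makes $t\mapsto t^{r/2}$ subadditive, so the tail sums split termwise and the geometric weights sum up; for $r\geq2$ one invokes the weighted discrete Hardy inequality directly. The geometrically decaying ``transition'' contribution from the indices with $2^{2m}\Lambda_j<1$ is handled by the mirror Hardy inequality, where $0<\theta<1$ is exactly what makes the relevant geometric series converge. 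The $K$-functional computation and the two isometric identifications are routine by comparison; the careful bookkeeping of constants across the two regimes $r\lessgtr2$ is the part that requires the most attention.
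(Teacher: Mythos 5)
Your proposal is correct, and its first half coincides with the paper's proof: like you, the paper transfers everything to sequence spaces via the Fourier coefficients (its Lemma \ref{lemma H sequence space unitary operator} is exactly your identification $[H]_\sim \simeq \ell^2(\mu^{-1})$, and its Lemma \ref{lemma interpolation space direct sum} disposes of the orthogonal complement by the same observation you make, namely that a component in $V^\perp$ keeps the $K$-functional bounded below and so forces an infinite interpolation norm). Where you genuinely diverge is in the core step, the identification $[\ell^2(I),\ell^2(\mu^{-1})]_{\theta,r} \triangleq \ell^{(\mu,\theta,r)}$: the paper does not compute anything here but cites a result of Gilbert (Theorem \ref{thm Gilbert}, specialised in Lemma \ref{lemma weighted Lp interpolation}) describing $[\gL^p(\nu),\gL^p(w)]_{\theta,r}$ by dyadic blocks, with the $J$-method doing the work in the cited reference. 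You instead exploit that both spaces of the couple are Hilbert spaces to compute the $K$-functional exactly, $K(x,t)\asymp G(t)$ with $G(t)^2=\sum_i \frac{t^2\mu_i^{-1}}{1+t^2\mu_i^{-1}}b_i^2$ (your coordinatewise minimisation and the $\sqrt{2}$-comparison are correct), then discretise the integral dyadically and reduce to discrete convolution inequalities against one-sided geometric kernels, which indeed follow from subadditivity of $t\mapsto t^{r/2}$ when $1\le r\le 2$ and from Young's inequality or a Schur test when $r\ge 2$, with $\theta<1$ guaranteeing convergence of the series coming from the transition regime $t^2\Lambda_j<1$. Both routes are sound; the paper's is shorter and its cited input works for all $p$, while yours is self-contained and elementary, uses the Hilbert structure ($p=2$) essentially, yields an explicit $K$-functional, and keeps all constants traceable. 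Two small points of care in your write-up: for $r=\infty$ every sum over $m$ and $j$ must be read as a supremum (your displayed discrete inequality is stated only for sums), and the mismatch between the dyadic $t$-grid (spacing $1$ in $m$) and the block scale (spacing $2\theta$ in $j/\theta$) means your kernel is not literally a convolution, so the Schur-test formulation rather than plain Young is the one to use; neither issue is a gap, just the bookkeeping you already flagged.
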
 

Note that as $[H]_\sim$ and $ \gL^2(\nu)$ consist of equivalence classes of functions, so does the interpolation space considered in Theorem \ref{thm equivalent norm}. Not only for applications in learning theory, it is of interest whether such spaces can be viewed as actual function spaces with continuous evaluations functionals by a suitable choice of representatives. 
Note that for $r\neq 2$, the spaces $[\gL^2(\nu), [H]_{\sim}]_{(\theta,r)}$ are no Hilbert spaces any more, so we have to expect BSFs instead of RKHSs. 

\begin{theorem} \label{thm interp space BSF}
	Let $(X,\mathcal{A},\nu)$ and $H$ be as in Assumption \refirgendwo{assumption RKHS strong}{H+}\anmerkung{$\sigma$-endlich, da Lifting verwendet wird}. 
	Moreover, let $0<\theta<1$ and $1\leq r \leq \infty$ be such that the embedding $[H]_\sim^{(\theta,r)} \hookrightarrow \gL^\infty (\nu)$ exists and is continuous. 
	Then there is a BSF $\overline{H}_{\nu}^{(\theta,r)}$ on $X$ such that the operator 
	\begin{align}
		\overline{H}_{\nu}^{(\theta,r)} \rightarrow [H]_\sim^{(\theta,r)}, \quad f \mapsto [f]_\sim
	\end{align}
	is a well-defined isometric isomorphism with 
	\begin{align}
		\Vert \overline{H}_{\nu}^{(\theta,r)} \hookrightarrow \mathcal{L}^\infty (X) \Vert 
		= \Vert [H]_\sim^{(\theta,r)} \hookrightarrow \gL^\infty (\nu) \Vert \text{.}
	\end{align}
	Additionally, there is a $\nu$-null set $N \subset X$ such that for all $f\in \overline{H}_{\nu}^{(\theta,r)}$, there is a $(b_i)_{i \in I} \in \ell^{(\mu,\theta,r)}$ such that $f(x)=\sum_{i \in I} b_i e_i(x)$ for all $x \in X \setminus N$. 
\end{theorem}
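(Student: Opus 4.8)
The plan is to realise the abstract interpolation space as a genuine function space by choosing, in a linear and norm-faithful way, one honest representative $X \to \reals$ for every equivalence class in $H_{\nu}^{(\theta,r)}$. The tool for this is a lifting of the measure space. Since $(X,\mathcal{A},\nu)$ is complete and $\sigma$-finite by Assumption \refirgendwo{assumption RKHS strong}{H+}, the classical lifting theorem (Ionescu Tulcea) provides a linear map $\rho \colon \gL^\infty(\nu) \to \mathcal{L}^\infty(X)$ that selects for each class $f$ a genuine representative, i.e.\ $[\rho(f)]_\sim = f$, and that is isometric for the respective sup-norms, so that $\Vert \rho(f) \Vert_{\mathcal{L}^\infty(X)} = \Vert f \Vert_{\gL^\infty(\nu)}$. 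The hypothesis that the embedding $H_{\nu}^{(\theta,r)} \hookrightarrow \gL^\infty(\nu)$ is continuous guarantees that $H_{\nu}^{(\theta,r)}$ is a subspace of $\gL^\infty(\nu)$, so that $\rho$ may indeed be applied to each of its elements.

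With this at hand I would set $\overline{H}_{\nu}^{(\theta,r)} := \rho\big(H_{\nu}^{(\theta,r)}\big) \subset \mathcal{L}^\infty(X)$ and equip it with the norm $\Vert \rho(f) \Vert_{\overline{H}_{\nu}^{(\theta,r)}} := \Vert f \Vert_{H_{\nu}^{(\theta,r)}}$. Linearity of $\rho$ shows that $\overline{H}_{\nu}^{(\theta,r)}$ is a linear space of honest functions, whose operations are the pointwise ones inherited from $\mathcal{L}^\infty(X)$. The map in the statement, $\Phi \colon \overline{H}_{\nu}^{(\theta,r)} \to H_{\nu}^{(\theta,r)}$, $g \mapsto [g]_\sim$, is then exactly the inverse of $\rho$: for $g = \rho(f)$ one has $[g]_\sim = f \in H_{\nu}^{(\theta,r)}$, so $\Phi$ is well defined and satisfies $\Phi \circ \rho = \mathrm{Id}$, whence it is surjective; it is isometric by the very definition of the norm on $\overline{H}_{\nu}^{(\theta,r)}$; and it is injective, since $[\rho(f)]_\sim = f$ recovers $f$ from $g$. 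Hence $\Phi$ is an isometric isomorphism, and in particular $\overline{H}_{\nu}^{(\theta,r)}$ is a Banach space, being isometrically isomorphic to the Banach space $H_{\nu}^{(\theta,r)}$ (which is Banach by Theorem \ref{thm equivalent norm}).

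It remains to verify that $\overline{H}_{\nu}^{(\theta,r)}$ is a BSF and to identify the embedding norm. Writing $C := \Vert H_{\nu}^{(\theta,r)} \hookrightarrow \gL^\infty(\nu) \Vert$ for the given embedding constant, the two isometries combine: for $g = \rho(f) \in \overline{H}_{\nu}^{(\theta,r)}$ and any $x \in X$,
\[
|g(x)| \le \Vert g \Vert_{\mathcal{L}^\infty(X)} = \Vert \rho(f) \Vert_{\mathcal{L}^\infty(X)} = \Vert f \Vert_{\gL^\infty(\nu)} \le C \, \Vert f \Vert_{H_{\nu}^{(\theta,r)}} = C \, \Vert g \Vert_{\overline{H}_{\nu}^{(\theta,r)}} \text{,}
\]
so every evaluation functional $\delta_x$ is bounded on $\overline{H}_{\nu}^{(\theta,r)}$ with norm at most $C$; thus $\overline{H}_{\nu}^{(\theta,r)}$ is a BSF. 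Taking the supremum over $g \neq 0$ in the identity $\Vert g \Vert_{\mathcal{L}^\infty(X)} = \Vert f \Vert_{\gL^\infty(\nu)}$ and translating via $\Phi$ yields $\Vert \overline{H}_{\nu}^{(\theta,r)} \hookrightarrow \mathcal{L}^\infty(X) \Vert = \Vert H_{\nu}^{(\theta,r)} \hookrightarrow \gL^\infty(\nu) \Vert$, as claimed.

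The main obstacle is concentrated entirely in the first step, namely securing a lifting with the linearity and sup-norm isometry properties; this is precisely where the completeness and $\sigma$-finiteness of $\nu$ from Assumption \refirgendwo{assumption RKHS strong}{H+} are indispensable. Once the lifting is available, the remaining verifications — linearity and bijectivity of $\Phi$, the BSF property, and the equality of embedding norms — are direct consequences of the representative-selecting and isometric nature of $\rho$, and require no further structure of the power space beyond its assumed continuous embedding into $\gL^\infty(\nu)$.
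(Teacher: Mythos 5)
Your proof is correct, and it takes a genuinely more direct route than the paper. The paper does not lift the elements of $H_{\nu}^{(\theta,r)}$ wholesale; instead it lifts only the eigenfunctions, setting $\overline{e}_i := \phi([e_i]_\sim)$, and then builds $\overline{H}_{\nu}^{(\theta,r)}$ as the set of series $\sum_{i \in I} b_i \overline{e}_i$ with $b \in \ell^{(\mu,\theta,r)}$, proving along the way that these series converge uniformly (via Lemma \ref{lemma convergence in interp norm} and continuity of the lifting) and represent the corresponding classes almost everywhere. Your construction $\overline{H}_{\nu}^{(\theta,r)} := \rho\big(H_{\nu}^{(\theta,r)}\big)$ bypasses all of the spectral machinery: it needs no eigenfunctions, no series convergence, and would in fact work verbatim for any Banach space of equivalence classes continuously embedded into $\gL^\infty(\nu)$ over a complete $\sigma$-finite measure space. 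The two constructions produce the same space, since by linearity and continuity of the lifting $\rho\big(\sum_i b_i [e_i]_\sim\big) = \sum_i b_i \overline{e}_i$; but the paper's detour buys concrete extra information, namely the explicit representation of every element of $\overline{H}_{\nu}^{(\theta,r)}$ as a uniformly convergent series in the lifted eigenfunctions, which is reused later in the proof of Theorem \ref{thm embedding equivalence} (direction (ii) $\Rightarrow$ (i)). One small point you should make explicit: the paper's lifting theorem (Theorem \ref{thm lifting}) only asserts $\Vert \rho \Vert \leq 1$, not isometry; however, the isometry you use does follow in one line, since $\Vert f \Vert_{\gL^\infty(\nu)} = \Vert [\rho(f)]_\sim \Vert_{\gL^\infty(\nu)} \leq \Vert \rho(f) \Vert_{\mathcal{L}^\infty(X)} \leq \Vert f \Vert_{\gL^\infty(\nu)}$, where the first inequality holds because the essential supremum of a class is at most the supremum of any of its representatives.
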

Theorem \ref{thm interp space BSF} ensures the existence of a BSF, whose functions may differ from the actual functions in $H$ on sets of measure zero. 
Under Assumption \refirgendwo{assumption RKHS strong}{T}, we can make the BSF more explicit. Then we can take orthogonal series of eigenfunctions as elements of the BSF, which finally enables to embed $H$ into this BSF, as the following Theorem states. 
\begin{theorem} \label{thm interp space BSF with topology}
	Let $(X,\mathcal{A},\nu)$, $H$, $\mu_i$, and $e_i$ be as in Assumption \refirgendwo{assumption RKHS strong}{T}\anmerkung{$\sigma$-endlich und $\nu$-vollständig ohne lifting nicht mehr nötig}. 
	Moreover, let $0<\theta<1$ and $1\leq r \leq \infty$ be such that the embedding $[H]_\sim^{(\theta,r)} \hookrightarrow \gL^\infty (\nu)$ exists and is continuous. Then the following statements are true: 
	\begin{enumerate}
		\item The series $\sum_{i \in I} b_i e_i $ converges uniformly for all $b \in \ell^{(\mu,\theta,r)}$. 
		\item The space 
		\begin{align} \label{formula BSF assumption T}
			{H}_{\nu}^{(\theta,r)}
			:= \left\{ \sum_{i \in I} b_i e_i \, \middle| \, b \in \ell^{(\mu,\theta,r)} \right\}
		\end{align}
		is a BSF, where the BSF-norm is inherited from the $\ell^{(\mu,\theta,r)}$-norm of the Fourier coefficients $b_i$. 
		\item The mapping 
		\begin{align}
			{H}_{\nu}^{(\theta,r)} \rightarrow [H]_\sim^{(\theta,r)}, \quad 
			\sum_{i \in I} b_i e_i \mapsto \sum_{i \in I} b_i [e_i]_\sim
		\end{align}
		is an isometric isomorphism. 
		\item It holds that  
		\begin{align}
			\Vert {H}_{\nu}^{(\theta,r)} \hookrightarrow \gC_b (X, \tau(H)) \Vert 
			= \Vert [H]_\sim^{(\theta,r)} \hookrightarrow \gL^\infty (\nu) \Vert \text{.}
		\end{align}
		\item We have $H \hookrightarrow {H}_{\nu}^{(\theta,r)}$. 
	\end{enumerate}		
\end{theorem}

The existence of an embedding of the power space $[H]_\sim^{(\theta,r)}$ into $\gL^\infty(\nu)$ seems to be a crucial requirement of Theorems \ref{thm interp space BSF} and \ref{thm interp space BSF with topology}. 
Let us elaborate in which situations we can expect such an embedding. 
For fine index $r=2$, this question has been approached in \cite[Section 5]{general_mercer}. We extend this approach to $1 \leq r \leq \infty$. 


\begin{theorem} \label{thm embedding equivalence}
	Let $(X,\mathcal{A},\nu)$, $H$, $\mu_i$, and $e_i$ be as in Assumption \refirgendwo{assumption RKHS}{H}. \anmerkung{$\sigma$-endlich da Lifting verwendet wird}
	Moreover, let $\kappa>0$, $0 < \theta < 1$, and $1\leq r \leq \infty$, where $\frac{1}{r} + \frac{1}{r'} = 1$. 
	Consider the following statements: 
	\begin{enumerate}\item It holds that 
		\begin{align} \label{formula embedding equivalent condition sup}
			\sup_{x \in X} \left\Vert \left( e_i(x) \mu_i^{\theta} \right)_{i \in I}\right\Vert_{(\mu,\theta,r')} 
			\leq \kappa \text{.}
		\end{align} 
		
		\item It holds that 
		\begin{align} \label{formula embedding equivalent condition}
			\essup_{x \in X} \left\Vert \left( e_i(x) \mu_i^{\theta} \right)_{i \in I}\right\Vert_{(\mu,\theta,r')} 
			\leq \kappa \text{.}
		\end{align} 
		
		\item The embedding $[H]_\sim^{(\theta,r)} \hookrightarrow \gL^\infty (\nu)$ exists and is continuous with operator norm at most $\kappa$. 
	\end{enumerate}
	Then we always have (i) $\Rightarrow$ (ii) $\Rightarrow$ (iii). 
	Moreover, under Assumption \refirgendwo{assumption RKHS strong}{H+}, we have (i) $\Rightarrow$ (ii) $\Leftrightarrow$ (iii), 
	and under Assumption \refirgendwo{assumption T}{T}, we even have (i) $\Leftrightarrow$ (ii) $\Leftrightarrow$ (iii). 
\end{theorem}

\begin{remark}
	Suppose that Assumption \refirgendwo{assumption RKHS}{H} is satisfied. Recall that the power kernel associated to $k$ is defined by its Mercer representation as 
	\begin{align}
		k_\nu^\theta (x,x') = \sum_{i \in I} \mu_i^\theta e_i(x) e_i(x')
	\end{align}
	for $x,x' \in X$, see \cite[Proposition 4.2]{general_mercer}. Using this notion, we can reformulate the terms in \eqref{formula embedding equivalent condition sup} and \eqref{formula embedding equivalent condition} as follows. 
	\begin{align}
		\left\Vert \left( e_i(x) \mu_i^{\theta} \right)_{i \in I}\right\Vert_{(\mu,\theta,r')}
		= \left\Vert \sum_{i \in I}  e_i(x) \mu_i^{\theta} [e_i]_{\sim} \right\Vert_{[H]_\sim^{(\theta,r')}} 
		&= \left\Vert\left[ \sum_{i \in I}  e_i(x) \mu_i^{\theta} e_i \right]_{\sim} \right\Vert_{[H]_\sim^{(\theta,r')}} \\
		&= \left\Vert\left[ k_{\nu}^{\theta}(x,\cdot) \right]_{\sim} \right\Vert_{[H]_\sim^{(\theta,r')}} \text{.}
	\end{align}
	Moreover, if $\sum_{i \in I} b_i e_i$ converges pointwise everywhere for all $b \in \ell^{(\mu,\theta,r')}$, we may define $H_\nu^{(\theta,r')}$ as in \eqref{formula BSF assumption T} and remove all $[\,\cdot\, ]_\sim$ to consider functions instead of equivalence classes. Under Assumption \refirgendwo{assumption T}{T}, this is possible due to Theorem \ref{thm interp space BSF with topology}.  
\end{remark}

\begin{remark}
	Suppose there is a $C>0$ such that the eigenfunctions $e_i$ satisfy $\Vert e_i \Vert_{\gL^\infty(\nu)} < C$ for all $i \in I$. 
	Then, by a countable union of null sets, we find that $|e_i(x)|<C$ for all $i \in I$ and for $\nu$-almost all $x \in X$. 
	Then 
	\begin{align}
		\essup_{x \in X} \left\Vert \left( e_i(x) \mu_i^{\theta} \right)_{i \in I}\right\Vert_{(\mu,\theta,r')} 
		\leq C \Vert (\mu_i^\theta)_{i \in I} \Vert_{(\mu,\theta,r')} \text{,}
	\end{align} 
	and hence condition \eqref{formula embedding equivalent condition} is satisfied for some $\kappa>0$ whenever $\Vert (\mu_i^\theta)_{i \in I} \Vert_{(\mu,\theta,r')} < \infty$. 
	
	More explicitly, the term of interest $\Vert (\mu_i^\theta)_{i \in I} \Vert_{(\mu,\theta,r')}$ is given by 
	\begin{align}
		\left(\sum_{j \in \integers} \left( \sum_{i \in M_j} \mu_i^{\theta} \right)^{\frac{r'}{2}} \right)^{\frac{1}{r'}}
		\quad \text{if } 1\leq r'< \infty \quad \text{ and by } \quad
		\sup_{j \in \integers} \left( \sum_{i \in M_j} \mu_i^{\theta} \right)^{\frac{1}{2}}
		\quad \text{if } r'=\infty \text{.}
	\end{align}
\end{remark}

\begin{remark}
	We illustrate the statements in Theorem \ref{thm embedding equivalence} in a concrete situation. To this end, suppose that $\mu_i=i^{-\alpha}$ for $\alpha>1$, which is linked to Sobolev spaces \cite{Sobolev_RKHS}\anmerkung{damit $\theta=\frac{1}{\alpha} \in (0,1)$}. 
	Then we find for $j \in \integers$ that 
	\begin{align}
		M_j = \{ i \in \naturals \, | \, 2^j < i^{-\alpha} \leq 2^{j+1}\}
		= \{ i \in \naturals \, | \, 2^{-\frac{j+1}{\alpha}} \leq i < 2^{-\frac{j}{\alpha}}\}
		= [2^{-\frac{j+1}{\alpha}}, 2^{-\frac{j}{\alpha}}) \cap \naturals
		\text{,}
	\end{align}
	so $M_j=\{\}$ for $j\geq 0$. 
	We hence note that $\min M_j \geq 2^{-\frac{j+1}{\alpha}}$ and $\max M_j \leq 2^{-\frac{j}{\alpha}}$. Furthermore, we have 
	\begin{align}
		 2^{-\frac{j}{\alpha}} - 2^{-\frac{j+1}{\alpha}} - 1 
		 \leq |M_j| 
		 \leq 2^{-\frac{j}{\alpha}} - 2^{-\frac{j+1}{\alpha}} + 1
	\end{align}
	By that, we estimate for $j<0$
	\begin{align}
		\sum_{i \in M_j} i^{-1} 
		\leq |M_j| 2^{\frac{j+1}{\alpha}} 
		\leq (2^{-\frac{j}{\alpha}} - 2^{-\frac{j+1}{\alpha}} + 1) 2^{\frac{j+1}{\alpha}} 
		\leq 2^{\frac{1}{\alpha}} - 1 + 2^{\frac{j+1}{\alpha}}
		\leq 1+2^{\frac{j+1}{\alpha}}
		\leq 1+2^{\frac{1}{\alpha}} \text{.}
	\end{align}
	Hence for $\theta= \frac{1}{\alpha}$ with $r=1$ and $r'=\infty$, we get 
	\begin{align}
		\Vert (\mu_i^\theta)_{i \in I} \Vert_{(\mu,\theta,r')}
		= \sup_{j \in \integers} \left( \sum_{i \in M_j} \mu_i^{\theta} \right)^{\frac{1}{2}}
		= \sup_{j \in \integers} \left( \sum_{i \in M_j} i^{-1} \right)^{\frac{1}{2}}
		\leq \left( 1+2^{\frac{1}{\alpha}} \right)^{\frac{1}{2}}
		< \infty \text{.}
	\end{align}
	However, for $r=2$ with $r'=2$, we get 
	\begin{align}
		\Vert (\mu_i^\theta)_{i \in I} \Vert_{(\mu,\theta,r')}
		= \left(\sum_{j \in \integers} \left( \sum_{i \in M_j} \mu_i^{\theta} \right)^{\frac{r'}{2}} \right)^{\frac{1}{r'}}
		= \left(\sum_{j \in \integers}  \sum_{i \in M_j} i^{-1}  \right)^{\frac{1}{2}}
		= \left( \sum_{i \in \naturals} i^{-1} \right)^{\frac{1}{2}}
		= \infty \text{.}
	\end{align}
	We see that in this situation, the fine index $r$ decides whether the conditions \eqref{formula embedding equivalent condition sup} and \eqref{formula embedding equivalent condition} are met. 
	\anmerkung{Geht auch für $r=1+\varepsilon$, da ist dann auch $...=\infty$. Das ist aber mehr Rechenaufwand. }
\end{remark}

\todo[inline,color=cyan]{Bis hier Veränderungen/neue Bemerkungen}

\kommentar{
	Condition \eqref{formula embedding equivalent condition} in Theorem \ref{thm embedding equivalence} translates to 
	\begin{align}
		\sum_{j \in \integers} \left( \sum_{i \in M_j} e_i(x)^2 \mu_i^{\theta} \right)^{\frac{r'}{2}} 
		\leq \kappa^{r'}
	\end{align}
	if $1\leq r' < \infty$ and 
	\begin{align}
		\sup_{j \in \integers} \left( \sum_{i \in M_j} e_i(x)^2 \mu_i^{\theta} \right)^{\frac{1}{2}}
		\leq \kappa
	\end{align}
	if $r'=\infty$. 
}

\kommentar{
	Wenn die Einbettung nach $\gL^\infty$ existiert, $\nu$ ein Wahrscheinlichkeitsmaß ist, und wenn $\frac{r'}{2}\leq1$ (also $r \geq 2$), dann gilt 
\begin{align}
	\infty
	&> \sum_{j \in \integers} \left( \sum_{i \in M_j} e_i(x)^2 \mu_i^{\theta} \right)^{\frac{r'}{2}} \\
	&= \sum_j  \left\Vert \sum_i e_i(\cdot)^2 \mu_i^\theta \right\Vert_{\gL^{\frac{r'}{2}}}^{\frac{r'}{2}}\\
	&\geq \sum_j  \left\Vert \sum_i e_i(\cdot)^2 \mu_i^\theta \right\Vert_{\gL^{1}}^{\frac{r'}{2}}\\
	&= \sum_j \left( \int \sum_i e_i(x)^2 \mu_i^\theta d \nu(x)\right)^{\frac{r'}{2}} \\
	&= \sum_j \left(\mu_i^\theta\right)^{\frac{r'}{2}}
\end{align}
}

	\section{Application to statistical learning} \label{section application RERM}
Let us introduce some basic notions from statistical learning theory, where we refer to \cite{svm} for details. 	We assume that $(X,\mathcal{A})$ is a measurable space, and that $Y\subset \reals$ is bounded. Suppose there is a probability measure $P$ on $X \times Y$, wherefrom only some samples $(x_1,y_1),...,(x_n,y_n)$ are known. 
Typically, one searches for a prediction function $f:X \rightarrow \reals$ that predicts an 
$Y$-value associated to a given $X$-value with low risk 
\begin{align}
	\mathcal{R}_{L,P}(f) := \int_{X \times Y} L(y,f(x)) dP(x,y) \text{.}
\end{align}
Hereby, $L: \reals \times \reals \rightarrow \reals_{\geq 0}$ is the loss function, such as the least squares loss $L(y,t)=(y-t)^2$. 
A function which minimises the risk over all prediction functions $f:X \rightarrow \reals$ is called Bayes decision function and denoted by $f_{L,P}^*$. 

To find a good prediction function, one can choose an RKHS $H$ on $X$, a $\lambda>0$ and perform a Tikhonov-regularised minimisation of the risk, that is  
\begin{align} \label{formula SVM}
	f_{P,\lambda} \in \mathrm{argmin}_{f  \in F} \; \lambda \Vert f \Vert_H^2 + \mathcal{R}_{L,P} (f)
\end{align}
where $P$ is in practice replaced by the empirical probability measure assigned to the given data. 
As done in \cite{spectral_regularisation}, we assume in this section that the kernel of $H$ is bounded by $1$. 

Assuming that $L$ is the least squares loss, $f_{P,\lambda}$ can alternatively be found by the eigen-decomposition of $H$, see \cite[Theorem 5.4]{general_mercer}: \anmerkung{Dort: $\rho$ ist unser $P$, $f^{\lambda}$ unser $f_{P,\lambda}^*$} \anmerkung{Richige Quelle? Oder \cite{spectral_regularisation}? Oder was anderes?}
\begin{align} \label{formula spectral Tikhonov}
	f_{P,\lambda} = \sum_{i \in I} \frac{\mu_i}{\lambda + \mu_i} \skalarprodukt{f_{L,P}^*}{[e_i]_{\sim}}_{\gL^2(P_X)} e_i
\end{align}
where we assume that $P_X$ is the marginal distribution, that $Y=[-1,1]$ and that $H$, the eigenvalues $\mu_i$ and the basis $e_i$ are as in Assumption \refirgendwo{assumption RKHS}{H}. 
By this reformulation, we can describe various other methods of regularised minimisation than \eqref{formula SVM} by replacing the  term $ \frac{\mu_i}{\lambda + \mu_i}$ with a more general function $\mu_i g_\lambda (\mu_i)$. The following conditions taken from \cite{spectral_regularisation} ensure that a reasonable algorithm emerges. 
\begin{definition} \label{def admissible filter function}
	A family of functions $g_\lambda: [0,1] \rightarrow \reals$ for $0 < \lambda \leq 1$ satisfying 
	\begin{align}
		\sup_{0 < \mu \leq 1} \sup_{0\leq \lambda \leq 1} | \lambda g_\lambda (\mu) | &< \infty \\
		\sup_{0 < \mu \leq 1} \sup_{0 \leq \lambda \leq 1} |\mu g_\lambda (\mu)| &< \infty  \label{formula spectral filter bounded} \\
		\lim_{\lambda \rightarrow 0} \mu g_\lambda (\mu) &= 1
	\end{align}
	for all $0 \leq \mu \leq 1$ is called an admissible filter function, if there is a constant $\alpha_0 > 0$ such that for all $0 < \alpha \leq \alpha_0$
	\begin{align}
		\sup_{0 < \mu \leq 1} \sup_{0 \leq \lambda \leq 1} | 1- \mu g_\lambda (\mu) | \mu^\alpha \lambda^{-\alpha} < \infty \text{.}
	\end{align}
\end{definition}

Let us now replace the term $\frac{\mu_i}{\lambda + \mu_i}$ in \eqref{formula spectral Tikhonov} by the more general $\mu_i g_\lambda (\mu_i)$, that is 
\begin{align} \label{formula spectral regularisation}
	f_{P,\lambda,g} := \sum_{i \in I} \mu_i g_\lambda (\mu_i) \skalarprodukt{f_{L,P}^*}{[e_i]_{\sim}}_{\gL^2(P_X)} e_i \text{.}
\end{align}
This converges in $\gL^2(P_X)$ if $f_{L,P}^* \in \gL^2(P_X)$, as $\mu_i g_\lambda (\mu_i)$  is bounded by \eqref{formula spectral filter bounded}. 
For example, to describe the Landweber iteration, that is a form of gradient descent with early stopping, we can choose 
\begin{align}
	g_t (\mu) := \sum_{i=0}^{t-1}(1-\mu)^i
\end{align}
by using $\lambda:=t^{-1}$ with $t \in \naturals$. 
For details on the Landweber iteration and for further examples of various algorithms arising from different choices of $g_\lambda$, see \cite{spectral_regularisation}. 


We can estimate the \enquote{approximation error}, in our case this is $f_{P,\lambda,g} - f_{L,P}^*$  measured in interpolation space norms. This can be viewed as the distance of the optimal Bayes decision function $f_{L,P}^*$ to the result of the regularised empirical risk minimisation $f_{P,\lambda,g}$ in the case of full knowledge of the distribution $P$. 
\begin{proposition} \label{prop regularisation error in interp norm}
	Let $X$ be a measurable space, $P$ a measure on $X \times [-1,1]$ and $k$ a measurable kernel on $X$ with RKHS $H$ that is compactly embedded into $\gL^2(P_X)$, where the eigenvalues of $T_k$ are bounded by $1$. \anmerkung{$\mu \leq 1$ wegen Def \ref{def admissible filter function}}
	Let $0< \beta < 1$, $0< \theta < 1$\anmerkung{Anders als bei Ingo: $\theta>\beta$ erlaubt} and $1 \leq r \leq \infty$. If $f_{L,P}^* \in [\gL^2(P_X),[H]_{\sim}]_{\beta,r}$, the following holds: 
	\begin{enumerate}
		\item There is a constant $C_1$	independent of $\lambda$ and $g_{\lambda}$ such that 
		\begin{align}
			\Vert [f_{P, \lambda,g}]_{\sim} - f_{L,P}^* \Vert_{[\gL^2(X),[H]_{\sim}]_{\theta,r}}
			\leq C_1 \sup_{i \in I} \left( \big| 1 - \mu_i g_\lambda (\mu_i) \big| \mu_i^{\frac{\beta-\theta}{2}} \right) \Vert f_{L,P}^* \Vert_{[\gL^2(X),[H]_{\sim}]_{\beta,r}} \text{.}
		\end{align}
		\item There is a constant $C_2$ independent of $\lambda$ and $g_{\lambda}$ such that 
		\begin{align} \label{formula estimate f_P,lambda,g}
		\Vert [f_{P, \lambda,g}]_{\sim} \Vert_{[\gL^2(X),[H]_{\sim}]_{\theta,r}}
		\leq C_2 \sup_{i \in I} \left( \big| \mu_i g_\lambda (\mu_i) \big| \mu_i^{\frac{\beta-\theta}{2}} \right) \Vert f_{L,P}^* \Vert_{[\gL^2(X),[H]_{\sim}]_{\beta,r}} \text{.}
		\end{align}
	\end{enumerate}
\end{proposition}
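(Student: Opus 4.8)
The plan is to pass to the spectral/coefficient picture via Theorem \ref{thm equivalent norm} and to deduce both estimates from a single \emph{multiplier} bound on the coefficient spaces $\ell^{(\mu,\theta,r)}$. Write $a_i := \skalarprodukt{f_{L,P}^*}{[e_i]_{\sim}}_{\gL^2(P_X)}$ for the Fourier coefficients of $f_{L,P}^*$ with respect to the orthonormal system $([e_i]_\sim)_{i\in I}$. By Theorem \ref{thm equivalent norm} the hypothesis $f_{L,P}^*\in[\gL^2(P_X),[H]_\sim]_{\beta,r}$ is equivalent to $f_{L,P}^*\in H_\nu^{(\beta,r)}$; in particular, by Definition \ref{def power space}, $f_{L,P}^*=\sum_{i\in I}a_i[e_i]_\sim$ has no component orthogonal to the eigenfunctions, and $(a_i)_{i\in I}\in\ell^{(\mu,\beta,r)}$ with $\Vert(a_i)_i\Vert_{(\mu,\beta,r)}=\Vert f_{L,P}^*\Vert_{H_\nu^{(\beta,r)}}$. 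From \eqref{formula spectral regularisation} I then read off the coefficient sequences of the two objects to be estimated: for (i) the sequence $b_i:=(\mu_i g_\lambda(\mu_i)-1)\,a_i$ represents $[f_{P,\lambda,g}]_\sim-f_{L,P}^*$, and for (ii) the sequence $b_i:=\mu_i g_\lambda(\mu_i)\,a_i$ represents $[f_{P,\lambda,g}]_\sim$.

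First I would isolate the elementary multiplier estimate. The partition $(M_j)_{j\in\integers}$ entering Definition \ref{def ell spaces} is governed by the eigenvalue sequence $\mu$ alone, so the \emph{same} partition underlies both the $\theta$- and the $\beta$-norm (if the weight is instead taken $\theta$-dependent, one first passes to a common $\mu$-based partition, which only changes norms by a fixed factor, exactly as for the base change noted after Definition \ref{def ell spaces}). For any multiplier sequence $m=(m_i)_{i\in I}$ and $b_i:=m_i a_i$ this lets me estimate, inside each block $M_j$,
\begin{align}
	c_j(b)^2 = \sum_{i\in M_j} m_i^2 a_i^2 \mu_i^{-\theta} = \sum_{i\in M_j} \big(m_i^2 \mu_i^{\beta-\theta}\big)\, a_i^2 \mu_i^{-\beta} \leq S^2 \sum_{i\in M_j} a_i^2 \mu_i^{-\beta},
\end{align}
where $S := \sup_{i\in I} |m_i|\, \mu_i^{(\beta-\theta)/2}$, so that $c_j(b) \leq S\,\big(\sum_{i\in M_j} a_i^2 \mu_i^{-\beta}\big)^{1/2}$ for every $j$. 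Taking $\ell^r$-norms in $j$ yields the multiplier bound $\Vert b\Vert_{(\mu,\theta,r)} \leq S\,\Vert(a_i)_i\Vert_{(\mu,\beta,r)}$. Here the exponent $(\beta-\theta)/2$ is precisely what converts the target weight $\mu_i^{-\theta}$ into the source weight $\mu_i^{-\beta}$, which is why the same factor appears in both claimed estimates.

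To finish, I would insert the two multiplier sequences. For (i) one has $S = \sup_i |1-\mu_i g_\lambda(\mu_i)|\,\mu_i^{(\beta-\theta)/2}$, for (ii) one has $S = \sup_i |\mu_i g_\lambda(\mu_i)|\,\mu_i^{(\beta-\theta)/2}$; in either case the multiplier bound gives $\Vert b\Vert_{(\mu,\theta,r)} \leq S\,\Vert f_{L,P}^*\Vert_{H_\nu^{(\beta,r)}}$. Applying Theorem \ref{thm equivalent norm} once on the target side (for $(\theta,r)$) and once on the source side (for $(\beta,r)$) to replace the power-space norms by the equivalent interpolation norms introduces two equivalence constants, whose product I absorb into $C_1$ respectively $C_2$. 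Since these constants come only from the norm equivalence of Theorem \ref{thm equivalent norm}, they depend on $\theta$, $\beta$, $r$ and $\mu$ but not on $\lambda$ or $g_\lambda$, as required; indeed the same constant serves for both parts.

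I expect the only genuine subtlety to be the bookkeeping around the partition $(M_j)$: one must ensure that the $\theta$- and $\beta$-norms are built from a common partition before comparing them block by block, since otherwise the clean pointwise inequality for $c_j(b)$ is unavailable. Everything else is routine. In particular, no admissibility of $g_\lambda$ is needed for the stated \emph{inequalities}; admissibility enters only afterwards, when one wants to bound the suprema $S$, and it is exactly there that allowing $\theta>\beta$ — where $\mu_i^{(\beta-\theta)/2}$ is unbounded as $\mu_i\to 0$ — forces one to exploit the decay of $1-\mu g_\lambda(\mu)$.
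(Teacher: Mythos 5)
Your proposal is correct and follows essentially the same route as the paper: the block-wise multiplier bound you prove inline is exactly the paper's Lemma \ref{lemma estimate theta beta norms} (including the observation that the partition $(M_j)$ depends only on $\mu$, not on $\theta$ or $\beta$), and the surrounding structure — passing to Fourier coefficients via Theorem \ref{thm equivalent norm}, inserting the two multiplier sequences, and absorbing the two equivalence constants, which are independent of $\lambda$ and $g_\lambda$ — mirrors the paper's proof. The only point the paper treats more carefully is verifying that the series $\sum_{i} \mu_i g_\lambda(\mu_i) b_i e_i$ converges in $H$ (using boundedness of $g_\lambda$ and $\mu_i \le 1$) so that $[f_{P,\lambda,g}]_\sim$ has the claimed coefficient expansion, whereas you read this off from \eqref{formula spectral regularisation}; this is a minor bookkeeping gap, not a flaw in the argument.
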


Suppose that in addition to the requirements of Proposition \ref{prop regularisation error in interp norm}, we know that the embedding $[\gL^2(\nu), [H]_\sim]_{\theta,r} \hookrightarrow \gL^\infty(\nu)$ is continuous with operator norm $\kappa$, that is 
\begin{align}
	\Vert [f_{P, \lambda, g}]_{\sim} \Vert_{\gL^\infty(\nu)} 
	\leq \kappa \Vert [f_{P, \lambda, g}]_{\sim} \Vert_{[\gL^2(X),[H]_{\sim}]_{\theta,r}}
\end{align}
which yields in combination with \eqref{formula estimate f_P,lambda,g} that
\begin{align}
	\Vert [f_{P, \lambda,g}]_{\sim} \Vert_{\gL^\infty(\nu)}
	\leq C_3 \sup_{i \in I} \left( \big| \mu_i g_\lambda (\mu_i) \big| \mu_i^{\frac{\beta-\theta}{2}} \right) \Vert f_{L,P}^* \Vert_{[\gL^2(X),[H]_{\sim}]_{\beta,r}} \text{.}
\end{align}
Such estimates are play an important role in the statistical analysis of kernel-based learning algorithms, see e.g. \cite[Theorem 7.23]{svm}. 
Note that the required existence of the embedding to $\gL^\infty(\nu)$ may be checked via Theorem \ref{thm embedding equivalence}.

\begin{remark}
	We elaborate Proposition \ref{prop regularisation error in interp norm} for two concrete regularisation settings. To this end, we assume that $\beta > \theta$, and hence $\gamma := \frac{\beta-\theta}{2}>0$. 
	\begin{enumerate}
		\item \textbf{Tikhonov regularisation.} Here we have $g_\lambda(\mu)=\frac{1}{\lambda + \mu}$. For the supremum in Proposition \ref{prop regularisation error in interp norm}, we find that  
		\begin{align}
			\sup_{i \in I} \left( \big| 1 - \mu_i g_\lambda (\mu_i) \big| \mu_i^{\gamma} \right)
			= \sup_{i \in I} \left( \frac{\lambda \mu_i^\gamma}{\lambda + \mu_i} \right)
			&\leq \sup_{0 < \mu \leq 1} \left( \frac{\lambda \mu^\gamma}{\lambda + \mu} \right) \\
			&\leq \lambda^\gamma \frac{\left( \frac{\gamma}{1-\gamma}\right)^{\gamma}}{1+\frac{\gamma}{1-\gamma}} \\
			&= \lambda^\gamma \gamma^\gamma (1-\gamma)^{1-\gamma} \text{,}
		\end{align}
		which can be seen by finding the zero of the derivative of the expression in question. 
		Hence for the estimate in Proposition \ref{prop regularisation error in interp norm}, we get 
		\begin{align}
			\Vert [f_{P, \lambda,g}]_{\sim} - f_{L,P}^* \Vert_{[\gL^2(X),[H]_{\sim}]_{\theta,r}}
			\leq C_3 \lambda^\gamma \Vert f_{L,P}^* \Vert_{[\gL^2(X),[H]_{\sim}]_{\beta,r}} \text{,}
		\end{align}
		where we additionally use that $\gamma^\gamma (1-\gamma)^{1-\gamma} \leq 1$. 
		\item \textbf{Landweber iteration.} We have $g_t (\mu) = \sum_{i=0}^{t-1}(1-\mu)^i$ with $\lambda=t^{-1}$. 
		For $\mu < 1$ we reformulate this by the geometric sum as $g_t = \frac{1-(1-\mu)^t}{\mu}$ to find that 
		\begin{align}
			\sup_{i \in I} \left( \big| 1 - \mu_i g_\lambda (\mu_i) \big| \mu_i^{\gamma} \right)
			= \sup_{i \in I} (1- \mu_i)^t \mu_i^{\gamma}
			\leq \sup_{0 < \mu < 1} (1- \mu)^t \mu^{\gamma}
			= \left( \frac{t}{\gamma+t} \right)^t \left( \frac{\gamma}{\gamma + t} \right)^\gamma \text{,}
		\end{align}
		which can be seen by finding the zero of the derivative of the expression in question. 
		Hence for the estimate in Proposition \ref{prop regularisation error in interp norm}, we get 
		\begin{align}
			\Vert [f_{P, \lambda,g}]_{\sim} - f_{L,P}^* \Vert_{[\gL^2(X),[H]_{\sim}]_{\theta,r}}
			\leq C_3 \left( \frac{\gamma}{\gamma + t} \right)^\gamma \Vert f_{L,P}^* \Vert_{[\gL^2(X),[H]_{\sim}]_{\beta,r}} \text{,}
		\end{align}
		where we note that estimating $\left( \frac{t}{\gamma+t}\right)^t \leq 1$ does not affect asymptotic properties in $t$. 
	\end{enumerate}
\end{remark}

	\section{Proofs} \label{section proofs}

We can describe interpolation spaces of weighted $\gL^p$-spaces by an equivalent norm via the following theorem. For the sake of completeness, we cite this result using our notation here. 

\begin{theorem} \label{thm Gilbert}
	Let $(X, \mathcal{A},\nu)$ be a measure space, and $w: X \rightarrow [\varepsilon, \infty)$  be measurable\anmerkung{Das nur um die vereinfachte Definition einzuhalten, also $\gL^p(w) \hookrightarrow \gL^p(\nu)$}, where $\varepsilon>0$ is some constant. 
	Furthermore, let $s>1$, $0 < \theta < 1$, and $1 \leq r < \infty$. Then 
	\begin{align} \label{formula Gilbert norm}
		\Vert f \Vert = \left(\sum_{j \in \integers} \left( s^{-j \theta} \Vert f \cdot \mathbbm{1}_{\tilde{M}_j} \Vert_{\gL^p(\nu)} \right)^r \right)^{\frac{1}{r}} \text{,}
	\end{align}
	where $\tilde{M}_j := \{x \in X \, | \,  s^{-j} < w(x)^{\frac{1}{p}} \leq s^{-j+1} \}$, is an equivalent norm on $[\gL^p(\nu), \allowbreak \gL^p(w)]_{(\theta,r)}$. 
	An analogous result holds for $r=\infty$. 
\end{theorem}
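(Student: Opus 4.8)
The plan is to reduce the interpolation norm to an explicit description of the $K$-functional of the pair $(\gL^p(\nu),\gL^p(w))$ and then to discretise. Observe first that $w\ge\epsilon$ forces $\gL^p(w)\hookrightarrow\gL^p(\nu)$, so the real interpolation space is well defined, and that the sets $\tilde M_j$ form a measurable partition of $X$ since $w^{1/p}>0$ everywhere. The core estimate I would establish is that, with constants depending only on $p$,
\[
	K(f,t,\gL^p(\nu),\gL^p(w))^p \asymp \int_X \min\bigl(1,t^p w(x)\bigr)\,|f(x)|^p\,\gd\nu(x)\,.
\]
The upper bound comes from the explicit competitor $g:=f\cdot\mathbbm{1}_{\{t^pw\le1\}}$, for which $\Vert f-g\Vert_{\gL^p(\nu)}^p+t^p\Vert g\Vert_{\gL^p(w)}^p$ equals the right-hand integral exactly, combined with $a+b\le 2^{1-1/p}(a^p+b^p)^{1/p}$. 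For the lower bound I would use the pointwise inequality $|a-u|^p+c|u|^p\ge 2^{1-p}|a|^p\min(1,c)$ (checked separately for $c\ge1$ and $c<1$) with $c=t^pw(x)$, together with $(a^p+b^p)^{1/p}\le a+b$, and then take the infimum over $g$. Notably this argument needs no measurable selection of a minimiser.

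Next I would discretise. Since $t\mapsto K(f,t)$ is nondecreasing, the standard equivalence $\int_0^\infty (t^{-\theta}K(f,t))^r\,\tfrac{\gd t}{t}\asymp\sum_{k\in\integers}(s^{-k\theta}K(f,s^k))^r$ holds with constants depending on $s,\theta,r$. Feeding in the $K$-functional estimate and splitting each integral over the partition $(\tilde M_j)_{j}$, on which $w\in(s^{-jp},s^{(1-j)p}]$ and hence $\min(1,s^{kp}w)\asymp s^{-\max(0,j-k)p}$ up to the fixed factor $s^p$, gives
\[
	\Vert f\Vert_{[\gL^p(\nu),\gL^p(w)]_{\theta,r}}^r \asymp \sum_{k\in\integers} s^{-k\theta r}\left(\sum_{j\in\integers} s^{-\max(0,j-k)p}b_j^p\right)^{r/p}\,,
\]
where $b_j:=\Vert f\,\mathbbm{1}_{\tilde M_j}\Vert_{\gL^p(\nu)}$; I abbreviate the inner sum as $c_k^p$.

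It then remains to prove the discrete Hardy-type equivalence $\sum_k(s^{-k\theta}c_k)^r\asymp\sum_j(s^{-j\theta}b_j)^r$, which I expect to be the only genuine obstacle. The bound $\gtrsim$ is immediate by retaining only the diagonal term $j=k$ in $c_k^p$. For $\lesssim$, put $\beta_j:=s^{-j\theta}b_j$ and $\kappa_m:=s^{(-\max(0,m)+m\theta)p}$; a reindexing shows $(s^{-k\theta}c_k)^p=\sum_{m\in\integers}\kappa_m\beta_{k+m}^p$, and $\kappa_m$ decays geometrically as $m\to\pm\infty$ because $\theta p>0$ and $(1-\theta)p>0$. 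If $r\ge p$, Young's convolution inequality in $\ell^{r/p}$ yields the bound with constant $\Vert\kappa\Vert_{\ell^1}^{r/p}$; if $r<p$, the subadditivity of $x\mapsto x^{r/p}$ yields it with constant $\sum_m\kappa_m^{r/p}$. Both constants are finite by the geometric decay, which closes the equivalence.

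Finally, the case $r=\infty$ is analogous: the discretisation becomes $\sup_{t>0}t^{-\theta}K(f,t)\asymp\sup_k s^{-k\theta}K(f,s^k)$, and the last step reduces to an $\ell^\infty$ estimate for $(s^{-k\theta}c_k)_k$ that follows directly from $\sum_m\kappa_m<\infty$.
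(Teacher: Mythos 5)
Your proposal is correct, but it takes a genuinely different route from the paper. The paper does not prove the theorem from scratch at all: it simply cites Gilbert's equivalence theorem for interpolation of weighted $\gL^p$-spaces, translating notation (Gilbert attaches the weight $\omega = w^{1/p}$ to the functions rather than to the measure) and invoking the equivalence of the $K$- and $J$-methods to match the definitions used here. You instead give a self-contained argument: the explicit two-sided estimate $K(f,t,\gL^p(\nu),\gL^p(w))^p \asymp \int_X \min(1,t^p w)\,|f|^p\,\gd\nu$ (your competitor $g = f\cdot\mathbbm{1}_{\{t^pw\le 1\}}$ and the pointwise inequality $|a-u|^p + c|u|^p \ge 2^{1-p}\min(1,c)|a|^p$ are both correct), the standard dyadic discretisation of the integral over $t$, the reduction on each $\tilde M_j$ of $\min(1,s^{kp}w)$ to $s^{-\max(0,j-k)p}$, and finally the discrete convolution estimate with the geometrically decaying kernel $\kappa_m = s^{(-\max(0,m)+m\theta)p}$, split into Young's inequality for $r\ge p$ and subadditivity of $x\mapsto x^{r/p}$ for $r<p$; all steps, including the $\ell^\infty$ variant for $r=\infty$, check out, and since every equivalence holds in the extended-real sense the argument also identifies the underlying set of the interpolation space, not just the norm on it. What each approach buys: the paper's citation is short, but it inherits the $J$-method translation and the notational pitfalls of the source (the authors themselves note, in a hidden annotation, an error in one of Gilbert's statements); your proof is longer but elementary, keeps track of how the constants depend on $p$, $s$, $\theta$, $r$, requires only the $K$-method definition actually used in the paper, and covers $r=\infty$ by the same mechanism rather than by a separate appeal to the literature. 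In fact, both the $K$-functional formula and the convolution step are exactly the kind of "known to experts" material the paper relegates to citation, so your argument could serve as a drop-in replacement proof.
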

\begin{proof}
	The assertion directly follows from \cite[Theorem 3.3]{gilbert}\anmerkung{\cite[Theorem 3.7]{gilbert} machts eigentlich direkter, aber da ist n Fehler. Der liegt an der Def. von $X_t$ dort vor 3.3, da wird $\frac{1}{t}$ verwendet. Das wird später nicht beachtet.  }, where we set $\mathscr{X}$ as $\gL^p(\nu)$ and use the weight $\omega=w^{\frac{1}{p}}$, as the author applies the weight to the functions instead of the measures as we do here. Finally, we need to recall that the $K$-method and $J$-method used in \cite{gilbert} yield equivalent norms on the interpolation space. 
\end{proof}

Let us adapt this result to our needs. To this end, we generalise the definition of $M_j$ from \eqref{formula def Mj} to a general measure space $(X,\mathcal{A},\nu)$ and a measurable \enquote{weight} function $w: X \rightarrow (0,\infty)$: 
\begin{align}
	M_j:= \left\{x \in X \, \middle| \, \frac{1}{2^{j+1}} \leq \frac{1}{w(x)} < \frac{1}{2^{j}} \right\}
\end{align}
\begin{lemma} \label{lemma weighted Lp interpolation}\anmerkung{Wird nur für $p=2$ angewendet, kann man also auch darauf einschränken}
	Let $(X, \mathcal{A},\nu)$ be a measure space, let $\varepsilon>0$, and $w: X \rightarrow [\varepsilon, \infty)$ be measurable\anmerkung{Das nur um die vereinfachte Definition einzuhalten, also $\gL^p(w) \hookrightarrow \gL^p(\nu)$}. 
	Furthermore, let $0 < \theta < 1$, $1 \leq r < \infty$, and let 
	\begin{align}
		c_j(f):= \left( \int_{M_j} |f|^p w^{\theta} \gd \nu \right)^{\frac{1}{p}}
	\end{align}
	for $f \in [\gL^p(\nu), \gL^p(w)]_{\theta,r}$ and $j \in \integers$. 
	Then an equivalent norm on $[\gL^p(\nu), \gL^p(w)]_{\theta,r}$ is given by 
	\begin{align}
		\Vert f \Vert := \Vert (c_j(f))_{j \in \integers} \Vert_{\ell^r} \text{.}
	\end{align}
\end{lemma}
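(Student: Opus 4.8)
The plan is to deduce the lemma directly from Theorem~\ref{thm Gilbert} by choosing the free scale parameter $s$ so that Gilbert's partition coincides with the partition $M_j$ used here, and then absorbing the weight $w^\theta$ into the integral blockwise, where it is essentially constant. As a preliminary remark I would note that $w \geq \epsilon > 0$ forces $\Vert f \Vert_{\gL^p(\nu)}^p \leq \epsilon^{-1} \Vert f \Vert_{\gL^p(w)}^p$, so that $\gL^p(w) \hookrightarrow \gL^p(\nu)$ and the interpolation space (as well as Theorem~\ref{thm Gilbert}) is available; moreover each $c_j(f)$ is finite, since $w^\theta$ is bounded on $M_j$ and $f \in \gL^p(\nu)$.

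First I would apply Theorem~\ref{thm Gilbert} with the specific choice $s := 2^{1/p}$. With this $s$, raising to the $p$-th power shows
\begin{align}
	\tilde{M}_j = \left\{ x \in X \,:\, 2^{-j/p} < w(x)^{1/p} \leq 2^{(-j+1)/p} \right\} = \left\{ x \in X \,:\, 2^{-j} < w(x) \leq 2^{-j+1} \right\} = M_{-j} \text{,}
\end{align}
so Gilbert's partition is exactly the present one, reindexed by $j \mapsto -j$. Substituting $k = -j$ in \eqref{formula Gilbert norm} and using $s^{-j\theta} = 2^{-j\theta/p} = 2^{k\theta/p}$, Theorem~\ref{thm Gilbert} tells us that
\begin{align}
	\left( \sum_{k \in \integers} \left( 2^{k\theta/p} \Vert f \mathbbm{1}_{M_k} \Vert_{\gL^p(\nu)} \right)^r \right)^{1/r}
\end{align}
is an equivalent norm on $[\gL^p(\nu), \gL^p(w)]_{\theta,r}$.

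The remaining step is to compare this quantity with $\Vert (c_k(f))_k \Vert_{\ell^r}$. On each block $M_k$ we have $2^k < w \leq 2^{k+1}$, hence $2^{k\theta} < w^\theta \leq 2^{(k+1)\theta}$ pointwise (using $\theta > 0$). Integrating $|f|^p$ against $w^\theta$ over $M_k$ and comparing with the unweighted integral yields, after taking $p$-th roots,
\begin{align}
	2^{k\theta/p} \Vert f \mathbbm{1}_{M_k} \Vert_{\gL^p(\nu)} \leq c_k(f) \leq 2^{\theta/p} \, 2^{k\theta/p} \Vert f \mathbbm{1}_{M_k} \Vert_{\gL^p(\nu)} \text{,}
\end{align}
with constants $1$ and $2^{\theta/p}$ that do not depend on $k$ or $f$. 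Since this two-sided estimate is uniform in $k$, taking $\ell^r$-norms over $k \in \integers$ preserves it, so $\Vert (c_k(f))_k \Vert_{\ell^r}$ is equivalent to the norm displayed above, and therefore to the interpolation norm. Because $k \mapsto -k$ is a bijection of $\integers$, the relabelling of the index is harmless, which completes the argument.

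I do not expect a genuinely hard obstacle here: the only point requiring care is the bookkeeping that aligns the two grids, namely pinning down $s = 2^{1/p}$ together with the index reversal $k = -j$ so that $\tilde M_j = M_{-j}$. Once the grids coincide, the weight $w^\theta$ is constant on each $M_k$ up to the fixed factor $2^\theta$, and the rest is the elementary uniform estimate above; the passage to $\ell^r$ is then immediate.
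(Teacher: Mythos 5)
Your proposal is correct and follows essentially the same route as the paper's own proof: applying Theorem~\ref{thm Gilbert} with $s = 2^{1/p}$, identifying Gilbert's partition with the $M_j$ via the index reversal $j \mapsto -j$, and then replacing $2^{k\theta}$ by $w^\theta$ blockwise at the cost of a uniform constant factor. The only (harmless) difference is cosmetic bookkeeping, such as your explicit preliminary check that $\gL^p(w) \hookrightarrow \gL^p(\nu)$ and that each $c_j(f)$ is finite.
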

\begin{proof}
	Let $s$ and $\tilde{M}_j$ be as in Theorem \ref{thm Gilbert} 
	and denote the norm in \eqref{formula Gilbert norm} by $\dreistrichnorm \cdot \dreistrichnorm$. 
	For $s:=2^{\frac{1}{p}}$ we get 
	\begin{align}
		\tilde{M}_{-j}
		= \{x \in X \, | \, s^{j} < w(x)^{\frac{1}{p}} \leq s^{j+1} \}
		= \{x \in X \, | \, 2^{-(j+1)} \leq w(x)^{-1} < 2^{-j} \}
		= M_j \text{.}
	\end{align}
	Hence performing an index swap from $j$ to $-j$ yields 
	\begin{align} 
		\dreistrichnorm f \dreistrichnorm
		=\left( \sum_{j \in \integers}  \left( \int_{M_j} 2^{j \theta} |f|^p \gd \nu \right)^{\frac{r}{p}} \right)^{\frac{1}{r}}
	\end{align}
	if $1 \leq r < \infty$, and similarly for $r=\infty$ using the supremum. 
	
	Finally, for $x \in M_j$, we have $2^{j \theta} < w(x)^{\theta} \leq 2^{(j+1)\theta}$. 
	Hence replacing $2^{j \theta}$ by $w(x)^{\theta}$ for $x \in M_j$ changes the expression at most by a factor of $2$. Consequently, $\Vert \cdot \Vert$ and $\dreistrichnorm \cdot \dreistrichnorm$ are equivalent norms. 
\end{proof}

\subsection{Auxiliary results on RKHS}


For an RKHS $H$ as in Assumption \refirgendwo{assumption RKHS}{H}, we considered the space of equivalence classes $[H]_{\sim}$ equipped with the quotient norm. It can be understood by the eigenfunction decomposition via the following lemma. 
\begin{lemma} \label{lemma H sequence space unitary operator}
	Let $H$, $\mu_i$ and $e_i$ be as in Assumption \refirgendwo{assumption RKHS}{H}. Then the mapping defined by 
	\begin{align}
		\phi: [H]_{\sim}  \rightarrow \ell^2(\mu^{-1}), 
		\quad [f]_{\sim} \mapsto \left( \skalarprodukt{[f]_{\sim}}{[e_i]_{\sim}}_{\gL^2(\nu)} \right)_{i \in I}
	\end{align}
	is an isometric isomorphism. 
\end{lemma}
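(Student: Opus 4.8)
The plan is to reduce the statement to Parseval's identity for the orthonormal system $(\sqrt{\mu_i}\,e_i)_{i\in I}$ inside the closed subspace $H_0 := (\ker I_k)^{\perp}$ of $H$. Throughout I would write $\psi_i := [e_i]_\sim$ and $\hat e_i := \sqrt{\mu_i}\,e_i$.

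First I would recall the elementary fact that the quotient norm turns $[H]_\sim = \mathrm{Im}(I_k)$ into a space isometrically isomorphic to $H_0$: each $[f]_\sim \in [H]_\sim$ has a unique preimage $g \in H_0$ with $I_k g = [f]_\sim$, and then $\Vert [f]_\sim \Vert_{[H]_\sim} = \Vert g \Vert_H$. Under this identification, the defining property of the adjoint gives
\begin{align}
	\phi([f]_\sim)_i = \langle I_k g, \psi_i \rangle_{\gL^2(\nu)} = \langle g, I_k^* \psi_i \rangle_H ,
\end{align}
so that everything hinges on computing $I_k^* \psi_i$.

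Next I would establish the key identity $I_k^* \psi_i = \mu_i e_i$ together with $e_i \in H_0$. From $I_k I_k^* \psi_i = T_k \psi_i = \mu_i \psi_i = I_k(\mu_i e_i)$ I get $I_k^*\psi_i - \mu_i e_i \in \ker I_k$, while $I_k^*\psi_i \in \mathrm{Im}(I_k^*) \subseteq H_0$; hence $I_k^*\psi_i = \mu_i P_{H_0} e_i$, where $P_{H_0}$ is the orthogonal projection onto $H_0$. Writing $f_i := P_{H_0}e_i = \mu_i^{-1} I_k^* \psi_i$, a short computation with the adjoint yields $\langle f_i, f_j\rangle_H = \mu_i^{-1}\mu_j^{-1}\langle I_k I_k^*\psi_i, \psi_j\rangle_{\gL^2(\nu)} = \mu_i^{-1}\delta_{ij}$. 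Comparing this with the assumed orthonormality $\langle \hat e_i, \hat e_j\rangle_H = \sqrt{\mu_i\mu_j}\,\langle e_i, e_j\rangle_H = \delta_{ij}$ and using $f_i \perp (e_j-f_j)$ forces $\Vert e_i - f_i\Vert_H = 0$, so $e_i = f_i \in H_0$ and $I_k^*\psi_i = \mu_i e_i = \sqrt{\mu_i}\,\hat e_i$. Substituting into the display gives $\phi([f]_\sim)_i = \sqrt{\mu_i}\,\langle g, \hat e_i\rangle_H$, whence $\Vert \phi([f]_\sim)\Vert_{\ell^2(\mu^{-1})}^2 = \sum_{i} \langle g, \hat e_i\rangle_H^2$.

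The proof then closes once $(\hat e_i)_{i\in I}$ is known to be a \emph{complete} orthonormal system of $H_0$: by Parseval this gives $\Vert \phi([f]_\sim)\Vert_{\ell^2(\mu^{-1})} = \Vert g\Vert_H = \Vert [f]_\sim\Vert_{[H]_\sim}$, which settles well-definedness and isometry simultaneously, while surjectivity follows by sending $b \in \ell^2(\mu^{-1})$ to $[I_k g]_\sim$ with $g := \sum_i b_i\mu_i^{-1/2}\hat e_i \in H_0$ (the series converging since $\sum_i b_i^2 \mu_i^{-1} < \infty$). I expect completeness to be the main obstacle. To settle it I would take $g \in H_0$ orthogonal to every $\hat e_i$ and use $\langle I_k g, \psi_i\rangle_{\gL^2(\nu)} = \sqrt{\mu_i}\,\langle g, \hat e_i\rangle_H = 0$; since the eigenvectors $(\psi_i)$ of the compact, positive, self-adjoint operator $T_k$ form a complete orthonormal system of $(\ker T_k)^\perp = \overline{\mathrm{Im}\,T_k} = \overline{\mathrm{Im}\,I_k}$, and $I_k g$ lies in that closure, the vanishing of all its Fourier coefficients gives $I_k g = 0$, i.e. $g \in \ker I_k \cap H_0 = \{0\}$.
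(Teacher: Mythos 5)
Your proof is correct, and at its core it follows the same strategy as the paper's: identify the quotient norm on $[H]_\sim$ with the $H$-norm of the unique preimage lying in a closed subspace of $H$ complementary to $\kernn I_k$, then reduce the isometry to the coefficient identity $\skalarprodukt{[f]_\sim}{[e_i]_\sim}_{\gL^2(\nu)} = \sqrt{\mu_i}\,\skalarprodukt{g}{\sqrt{\mu_i}\,e_i}_H$ together with Parseval, and obtain surjectivity from an explicit series. The genuine difference is how the structural input is justified. The paper works with $H_I := \overline{\spann\{\sqrt{\mu_i}\,e_i \,|\, i \in I\}}$ and cites \cite{general_mercer} (Formulas (17) and (19) there) for $H_I = \overline{\bild I_k^*}$ and $H_I^\perp = \kernn I_k$, from which $[H]_\sim = [H_I]_\sim$ and the unique-preimage description follow. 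You instead start from $H_0 := (\kernn I_k)^\perp$ and prove the equivalent facts from scratch: first the identity $I_k^*[e_i]_\sim = \mu_i e_i$ with $e_i \in H_0$, obtained from the projection identity $I_k^*[e_i]_\sim = \mu_i P_{H_0}e_i$ combined with the Pythagoras comparison $\mu_i^{-1} = \Vert e_i\Vert_H^2 = \Vert P_{H_0}e_i\Vert_H^2 + \Vert e_i - P_{H_0}e_i\Vert_H^2$, which is a neat self-contained trick; and second the completeness of $(\sqrt{\mu_i}\,e_i)_{i \in I}$ in $H_0$, deduced from the spectral-theoretic fact that the $[e_i]_\sim$ span $(\kernn T_k)^\perp = \overline{\bild I_k}$, so that a $g \in H_0$ orthogonal to all $\sqrt{\mu_i}\,e_i$ satisfies $I_k g = 0$ and hence $g = 0$. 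All these steps check out, including the surjectivity via $g := \sum_{i \in I} b_i \mu_i^{-1/2}\,(\sqrt{\mu_i}\,e_i)$ for $b \in \ell^2(\mu^{-1})$. What your route buys is self-containedness: it needs only the definition of the adjoint, the spectral theorem for the compact positive operator $T_k$, and Assumption H, whereas the paper's route buys brevity by outsourcing precisely these two structural facts to the cited reference; in substance both proofs establish the same orthogonal decomposition $H = H_I \oplus \kernn I_k$, since your $H_0$ coincides with the paper's $H_I$ once your completeness argument is in place.
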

\begin{proof}
	Recall that $(\sqrt{\mu_i} e_i)_i$ is an ONS in $H$. 
	We write $H_I:= \overline{\spann \{\sqrt{\mu_i} e_i | i \in I\}}$ for the subspace spanned by this ONS. 
	By \cite[Formula (17)]{general_mercer}, we know that $H_I=\overline{\bild I_k^*}$, and hence \cite[Formula (19)]{general_mercer} shows $H_I^\perp = \kernn I_k$. 
	If we choose an ONB $(\tilde{e}_j)_j$ of $H_I^\perp$ on a new index set $J$, we thus find 
	\begin{align}
		[H]_\sim &= \left\{ \left[\sum_{i \in I} a_i \sqrt{\mu_i} e_i + \sum_{j \in J} b_j \tilde{e}_j\right]_\sim \, \middle| \, a \in \ell^2(I), b \in \ell^2(J) \right\} \\
		&= \left\{ \left[\sum_{i \in I} a_i \sqrt{\mu_i} e_i \right] \, \middle| \, a \in \ell^2(I) \right\} \\
		&= [H_I]_\sim \text{.}
	\end{align} 
	Since $\tilde{I}_k:=I_k|_{H_I \rightarrow [H]_\sim} $ is bijective by the considerations above, we see that for all $f \in [H_I]_\sim$ there is exactly one $h_f \in H_I$ such that $[h_f]_\sim = \tilde{I}_k h_f = f$. This gives 
	\begin{align}
		\Vert f \Vert_{[H_I]_\sim}
		= \inf \{ \Vert h \Vert_{H} \, : \, h \in H_I \text{ with } [h]_\sim=f\}
		= \Vert h_f \Vert_H \text{.}
	\end{align}
	Moreover, for $f \in [H]_\sim$ we find 
	\begin{align}
		\Vert f \Vert_{[H]_\sim}^2
		&= \inf \{ \Vert h \Vert_H^2 \, : \, h \in H \text{ with } [h]_\sim=f\} \\
		&= \inf \{ \Vert h_I + h_0 \Vert_H^2 \, : \, h_I \in H_I \text{ and } h_0 \in \kernn I_k \text{ with } [h_I+ h_0]_\sim = f \} \\
		&= \inf \{ \Vert h_I \Vert^2_H + \Vert h_0 \Vert_H^2 \, : \, h_I \in H_I \text{ and } h_0 \in \kernn I_k \text{ with } [h_I+ h_0]_\sim = f \} \\
		&= \inf \{ \Vert h_I \Vert^2_H  \, : \, h_I \in H_I \text{ with } I_k h_I = f \} \\
		&= \Vert h_f \Vert^2_H \text{.}
	\end{align}
	Combining both considerations, we find $\Vert \, \cdot \, \Vert_{[H]_\sim}=\Vert \, \cdot \, \Vert_{[H_I]_\sim}$. 
	
	Now for $h \in H_I$, let $a_i:=\skalarprodukt{h}{\sqrt{\mu}_i e_i}_H$ and $b_i := \skalarprodukt{[h]_\sim}{([e_i]_\sim)_{i \in I}}_{\gL^2(\nu)}$. 
	Then we have
	\begin{align}
		\sum_{i \in I} b_i [e_i]_\sim
		= [h]_\sim
		= \left[\sum_{i \in I} a_i \sqrt{\mu_i} e_i \right]_\sim
		= \sum_{i \in I} a_i \sqrt{\mu_i} [e_i]_\sim \text{,}
	\end{align}
	and thus $a_i \sqrt{\mu_i} = b_i$. 
	Since $a \in \ell^2(I)$, we get 
	\begin{align}
		\Vert b \Vert_{\ell^2(\mu^{-1})}^2
		= \sum_{i \in I} b_i^2 \mu_i^{-1}
		= \sum_{i \in I} \left(\frac{b_i}{\sqrt{\mu_i}}\right)^2
		= \sum_{i \in I} a_i^2
		= \Vert h \Vert_{H_I}^2 \text{.}
	\end{align}
	Consequently, the map $h \mapsto \skalarprodukt{[h]_\sim}{[e_i]_\sim}_{\gL^2(\nu)}$ is isometric. By the arguments above, it is not hard to see that it is also surjective, thus it is an isometric isomorphism. 
\end{proof}

The following lemma translates the interpolation space norm of $[\gL^2(\nu), [H]_{\sim}]_{(\theta,r)}$ to the eigenfunction decomposition of the RKHS. 
\begin{lemma} \label{lemma sequence interpolation space equivalence}
	For $0<\theta<1$, $1\leq r \leq \infty$ and $\nu$, $H$, $e_i$ and $\mu_i$ as in Assumption \refirgendwo{assumption RKHS}{H}, the following map is an isometric isomorphism: 
	\begin{align}
		[\gL^2(\nu), [H]_{\sim}]_{(\theta,r)} \rightarrow [\ell^2(I), \ell^2(\mu^{-1})]_{(\theta,r)}, 
		\quad f \mapsto (\skalarprodukt{f}{[e_i]_{\sim}})_{i \in I}
	\end{align}
\end{lemma}
\begin{proof}
	Let us define $\mathcal{H}:= \overline{[H]_{\sim}}^{\,\gL^2(\nu)}$ and equip this space with the $\gL^2(\nu)$-inner product. 
	An application of Lemma \ref{lemma interpolation space direct sum} to this orthogonal decomposition yields 
	\begin{align}
		\left[\gL^2(\nu), [H]_{\sim}\right]_{\theta,r}
		= \left[\mathcal{H} \oplus \mathcal{H}^\perp, [H]_{\sim}\right]_{\theta,r}
		= \left[\mathcal{H}, [H]_{\sim}\right]_{\theta,r}
	\end{align}
	as $[H]_{\sim} \hookrightarrow \mathcal{H}$. \anmerkung{Nach Annahme kompakt eingebettet}
	Now let us consider the operator 
	\begin{align}
		\phi: \mathcal{H} \rightarrow \ell^2(I), \quad x \mapsto \left(\skalarprodukt{x}{[e_i]_{\sim}}_{\gL^2(\nu)}\right)_{i \in I} \text{.}
	\end{align}
	Note that this operator is an isometric isomorphism. 
	The restricted operator  
	\begin{align}
		\phi |_{[H]_{\sim} \rightarrow \ell^2(\mu^{-1})}
	\end{align}
	is again an isometric isomorphism by Lemma \ref{lemma H sequence space unitary operator}. 
	Hence Lemma \ref{lemma interpolation space isometric isomorphism} shows that the operator restricted to the interpolation spaces
	\begin{align}
		\phi |_{\left[\mathcal{H}, [H]_{\sim}\right]_{\theta,r} \rightarrow \left[ \ell^2(I),  \ell^2(\mu^{-1}) \right]}
	\end{align}
	is again an isometric isomorphism. 
	This proves the assertion. 
\end{proof}

\subsection{Proof of Theorem \ref{thm equivalent norm}}


\begin{proof}[Proof of Theorem \ref{thm equivalent norm}] \labelirgendwo{proof thm equivalent norm}
	We apply Lemma \ref{lemma weighted Lp interpolation} to the counting measure on $I$ and the weight sequence $\mu^{-1}$ to find that the identity mapping $J: [\ell^2(I), \ell^2(\mu^{-1})]_{(\theta,r)} \rightarrow \ell^{(\mu,\theta,r)}$ is a norm equivalence. 
	
	Consider the isometric isomorphism  $\alpha: [\gL^2(\nu),[H]_\sim]_{\theta,r} \rightarrow [\ell^2(I), \ell^2(\mu^{-1})]_{(\theta,r)}$ from Lemma \ref{lemma sequence interpolation space equivalence}. It maps functions to their $\gL^2(\nu)$-Fourier coefficients with respect to the ONS $e_i$. 
	
	The map $\beta: \ell^{(\mu,\theta,r)} \rightarrow [H]_\sim^{(\theta,r)}$ given by $b \mapsto \sum_{i \in I} b_i [e_i]_\sim$ is an isometric isomorphism by Definition \ref{def power space}.
	
	We note that for $f \in [\gL^2(\nu),[H]_\sim]_{\theta,r}$, we have \anmerkung{Letzte Gleichheit gilt da $\beta$ isom. Iso. ist und damit das ONS ausreicht}
	\begin{align}
		\beta(J(\alpha(f)))
		= \beta(\alpha(f)) 
		= \beta((\skalarprodukt{f}{[e_i]_\sim})_{i \in I})
		= \sum_{i \in I} \skalarprodukt{f}{[e_i]_\sim} [e_i]_\sim
		= f \text{,}
	\end{align}
	which shows that the following diagram commutes: 
	\begin{center}
		\begin{tikzcd}
		{[\gL^2(\nu),[H]_\sim]_{\theta,r}} \arrow[rr, "\triangleq"',"\Id"] \arrow[dd, "\simeq","\alpha"'] &  & {[H]_\sim^{(\theta,r)}}                  \\
		&  &                    \\
		{[\ell^2(I), \ell^2(\mu^{-1})]_{(\theta,r)}} \arrow[rr, "\triangleq","J"']                 &  & {\ell^{(\mu,\theta,r)}} \arrow[uu, "\simeq","\beta"']
	\end{tikzcd}
	\end{center}
	This proves the assertion. 
\end{proof}

\begin{remark} \label{remark convergence in L2}
	By Lemma \ref{lemma interpolation space subsets}, we know that $[\gL^2(\nu), [H]_{\sim}]_{(\theta,r)} \hookrightarrow \gL^2(\nu)$. Hence we can apply Theorem \ref{thm equivalent norm} to see that for the power spaces, we have $[H]_\sim^{(\theta,r)} \hookrightarrow \gL^2(\nu)$ as well. However, the operator norms of these two embedding operators might differ by a constant, as the norms are equivalent, but not necessarily equal. 
\end{remark}

\begin{lemma} \label{lemma convergence in interp norm}
	Let $0<\theta<1$ and $1 \leq r \leq \infty$. If $b \in \ell^{(\mu,\theta,r)}$, then $\sum_{i \in I} b_i [e_i]_{\sim}$ converges in the power space $[H]_\sim^{(\theta,r)}$. 
\end{lemma}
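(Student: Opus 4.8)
The plan is to transport the question to the sequence space $\ell^{(\mu,\theta,r)}$ by means of the isometric isomorphism $\beta\colon\ell^{(\mu,\theta,r)}\to H_\nu^{(\theta,r)}$, $b\mapsto\sum_{i\in I}b_i[e_i]_\sim$, from the proof of Theorem~\ref{thm equivalent norm}. Under $\beta$ the partial sums $\sum_{i\le n}b_i[e_i]_\sim$ correspond to the finitely supported truncations of $b$, so their convergence to $\beta(b)$ in $H_\nu^{(\theta,r)}$ is equivalent to $\Vert b\,\mathbbm{1}_{\{i>n\}}\Vert_{(\mu,\theta,r)}\to0$ as $n\to\infty$, where $b\,\mathbbm{1}_{\{i>n\}}$ denotes the tail of $b$. (If $I$ is finite there is nothing to show.) It therefore suffices to prove that these tail norms vanish.

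The key structural observation is that each block $M_j$ is finite and that $M_j=\emptyset$ for all sufficiently small $j$: since $(\mu_i)$ is non-increasing with $\mu_i\to0$, only finitely many $i$ satisfy $2^j<\mu_i^{-1}\le2^{j+1}$, and none do once $2^{j+1}<\mu_1^{-1}$, so the index $j$ is bounded below. Hence, for each fixed $j$, the quantity $c_j(b\,\mathbbm{1}_{\{i>n\}})$ is a finite sum that equals $0$ as soon as $n\ge\max M_j$; in particular $c_j(b\,\mathbbm{1}_{\{i>n\}})\to0$ for every $j$, while $c_j(b\,\mathbbm{1}_{\{i>n\}})\le c_j(b)$ with $(c_j(b))_j\in\ell^r$ furnishing a summable dominating sequence. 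For $1\le r<\infty$ the claim then follows from dominated convergence applied to the sum over $j$, giving $\Vert b\,\mathbbm{1}_{\{i>n\}}\Vert_{(\mu,\theta,r)}^r=\sum_{j}c_j(b\,\mathbbm{1}_{\{i>n\}})^r\to0$; transporting back through the isometry $\beta$ yields the asserted convergence, and the limit is the element with Fourier coefficients $b$.

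The main obstacle is the case $r=\infty$, where dominated convergence is unavailable and the tail norm $\sup_j c_j(b\,\mathbbm{1}_{\{i>n\}})$ need not tend to $0$ (it stays bounded away from $0$ whenever $c_j(b)\not\to0$, which can occur for $b\in\ell^{(\mu,\theta,\infty)}$). There I would instead establish convergence in the ambient space $\gL^2(\nu)$: estimating within each block by $\mu_i<2^{-j}$ gives $\sum_{i\in M_j}b_i^2\le2^{-j\theta}c_j(b)^2$, and summing the geometrically decaying factors $2^{-j\theta}$ over the lower-bounded range of $j$ shows $\sum_{i\in I}b_i^2\le C\,\Vert b\Vert_{(\mu,\theta,r)}^2$, i.e.\ $\ell^{(\mu,\theta,r)}\hookrightarrow\ell^2(I)$ for every $r$ (in line with Remark~\ref{remark convergence in L2}). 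Since $([e_i]_\sim)_{i\in I}$ is an orthonormal system in $\gL^2(\nu)$, the series then converges in $\gL^2(\nu)$ to a function $f$ with $\langle f,[e_i]_\sim\rangle_{\gL^2(\nu)}=b_i$; by Definition~\ref{def power space} this $f$ lies in $H_\nu^{(\theta,r)}$, which is the sense in which the series converges in the power space when $r=\infty$.
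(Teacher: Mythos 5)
For $1\le r<\infty$ your argument is correct, and it is \emph{not} the paper's argument: you prove directly that the finitely supported truncations converge to $b$ in $\ell^{(\mu,\theta,r)}$, using that each block $M_j$ is finite (since $\mu_i\to0$), that the blocks are empty for all small $j$, and dominated convergence of $\sum_j c_j(b\,\mathbbm{1}_{i>n})^r$ against the summable majorant $(c_j(b)^r)_{j\in\integers}$. The paper instead invokes Lemma \ref{lemma weighted Lp interpolation} and Lemma \ref{lemma interpolation space subsets} to obtain the embedding $\ell^{(\mu,\theta,r)}\hookrightarrow\ell^2(I)$ and then estimates $\Vert b\,\mathbbm{1}_{i\ge n+1}\Vert_{(\mu,\theta,r)}\le C\Vert b\,\mathbbm{1}_{i\ge n+1}\Vert_{\ell^2(I)}$; but this uses the embedding in the wrong direction, since what it actually yields is $\Vert\cdot\Vert_{\ell^2(I)}\le C\Vert\cdot\Vert_{(\mu,\theta,r)}$, and the reverse inequality is false (a unit coefficient placed at a single index $i\in M_j$ has $\ell^2$-norm $1$ but $(\mu,\theta,r)$-norm $\mu_i^{-\theta/2}>2^{j\theta/2}$). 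So on this range of $r$ your proof is sound where the paper's, as written, is not.

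Your discussion of $r=\infty$ is the more important point. You correctly observe that the tail norms $\sup_j c_j(b\,\mathbbm{1}_{i>n})$ need not vanish, and in fact this shows the lemma as stated is \emph{false} for $r=\infty$: pick one index $i_j$ in each nonempty block, set $b_{i_j}=\mu_{i_j}^{\theta/2}$ and $b_i=0$ otherwise, so that $c_j(b)=1$ for infinitely many $j$; then $b\in\ell^{(\mu,\theta,\infty)}$ but every tail has norm $1$, and since $H_\nu^{(\theta,\infty)}\hookrightarrow\gL^2(\nu)$ (Remark \ref{remark convergence in L2}) forces any $H_\nu^{(\theta,\infty)}$-limit of the partial sums to coincide with the $\gL^2(\nu)$-limit, the series cannot converge in $H_\nu^{(\theta,\infty)}$. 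This is the analogue of finitely supported sequences failing to be dense in $\ell^\infty$. Be aware, though, that your resolution --- the series converges in $\gL^2(\nu)$ and its limit lies in $H_\nu^{(\theta,\infty)}$ --- is a genuinely weaker statement than the lemma asserts; declaring it to be \enquote{the sense in which the series converges in the power space} is a redefinition, not a proof. That is not a defect of your argument so much as of the lemma itself: no proof of the stated claim can exist for $r=\infty$, and the paper's later uses of it with $r=\infty$ (in Lemma \ref{lemma interp space BSF}, hence in Theorems \ref{thm interp space BSF} and \ref{thm embedding equivalence}) would need either your weaker substitute or a restriction of the fine index.
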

\begin{proof}
	For finite $I$, the assertion is clear. Hence let us assume $I=\naturals$. 
	Let $f := \sum_{i \in I} b_i [e_i]_{\sim}$, where convergence is in $\gL^2(\nu)$ at first. 
	To show convergence in $[H]_\sim^{(\theta,r)}$-norm, we notice that $\ell^{(\mu,\theta,r)} \triangleq [\ell^2(I), \allowbreak \ell^2(\mu^{-1})]_{(\theta,r)}$ by Lemma \ref{lemma weighted Lp interpolation}.  Applying Lemma \ref{lemma interpolation space subsets} yields $\ell^{(\mu,\theta,r)} \hookrightarrow \ell^2(I)$. If $C$ denotes the corresponding operator norm, we thus find 
	\begin{align}
		\left\Vert f - \sum_{i=1}^n b_i [e_i]_{\sim} \right\Vert_{[H]_\sim^{(\theta,r)}}
		= \Vert (b_i \mathbbm{1}_{i \geq n+1}) \Vert_{(\mu,\theta,r)}
		\leq C \Vert (b_i \mathbbm{1}_{i \geq n+1}) \Vert_{\ell^2(I)}
		 \xrightarrow{n \rightarrow \infty} 0 \text{.}
	\end{align}
\end{proof}

\subsection{Liftings}

Let us introduce a measure-theoretic technique that will play a role in the subsequent proofs. 
To this end, we consider the map 
\begin{align}
	\mathcal{L}^\infty (X) \rightarrow \gL^{\infty} (\nu), \quad f \mapsto [f]_{\sim}
\end{align}
which assigns to each $f$ its equivalence class of functions. 
This mapping is surjective by definition of $\gL^{\infty} (\nu)$, hence it possesses a, not necessarily unique, right inverse. \anmerkung{Das braucht das Auswahlaxiom}
However, this reasoning does not yield further properties of such a right inverse. Nonetheless, by means of lifting theory, one can show that a bounded linear right inverse exists. 

\begin{theorem} \label{thm lifting}
	Let $\nu$ be a $\sigma$-finite measure on the measurable space $(X,\mathcal{A})$ such that $\mathcal{A}$ is $\nu$-complete. 
	Then there exists a bounded, linear, and injective operator 
	\begin{align}
		\phi: \gL^{\infty} (\nu) \rightarrow \mathcal{L}^\infty (X)
	\end{align}
	that satisfies $\Vert \phi \Vert \leq 1$ and 
	\begin{align}
		[\phi ([f]_{\sim})]_{\sim} = [f]_{\sim}
	\end{align}
	for all $f \in \mathcal{L}^{\infty} (\nu)$. 
\end{theorem}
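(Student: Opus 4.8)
The plan is to recognise that this statement is essentially a repackaging of the classical \emph{lifting theorem} of measure theory, and to deduce all the listed properties of $\phi$ from the existence of a lifting. Recall that a lifting of $(X,\mathcal{A},\nu)$ is a map $\rho: \mathcal{L}^\infty(\nu) \rightarrow \mathcal{L}^\infty(\nu)$ that is linear, multiplicative and unital (i.e. $\rho(1)=1$), order-preserving, satisfies $\rho(f)=f$ $\nu$-almost everywhere, and depends only on the $\nu$-equivalence class of its argument (so $f=g$ $\nu$-a.e. implies $\rho(f)=\rho(g)$). The existence of such a $\rho$ for a complete $\sigma$-finite measure space is exactly the content of the lifting theorem (see e.g.\ the monograph of Ionescu Tulcea and Ionescu Tulcea, or Fremlin's \emph{Measure Theory}, §341); this is the only nontrivial ingredient, and I would invoke it directly. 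One technical caveat is that the theorem is often first proved for complete finite spaces: for the $\sigma$-finite case I would either cite a version stated for (strictly localizable, hence $\sigma$-finite) spaces, or glue liftings on a countable disjoint partition $X=\bigsqcup_n X_n$ into finite-measure pieces, which works because the pieces are disjoint.

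Given a lifting $\rho$, I would define
\begin{align}
  \phi: \gL^\infty(\nu) \rightarrow \mathcal{L}^\infty(X), \quad [f]_\sim \mapsto \rho(f) \text{.}
\end{align}
This is well defined since $\rho$ depends only on the equivalence class, and it is linear because $\rho$ is. The defining identity $[\phi([f]_\sim)]_\sim=[f]_\sim$ is immediate from $\rho(f)=f$ $\nu$-a.e. Injectivity then follows formally: if $\phi([f]_\sim)=0$, then $[f]_\sim=[\phi([f]_\sim)]_\sim=[0]_\sim=0$.

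It remains to check that $\phi$ actually lands in $\mathcal{L}^\infty(X)$ (everywhere-bounded functions, not merely essentially bounded ones) and that $\Vert\phi\Vert\leq 1$; both follow from one sandwich argument. Writing $c:=\Vert [f]_\sim\Vert_{\gL^\infty(\nu)}=\essup_{x\in X}|f(x)|$, we have $-c\cdot 1\leq f\leq c\cdot 1$ $\nu$-a.e., so by order-preservation, unitality and a.e.-invariance of $\rho$,
\begin{align}
  -c\cdot 1 = \rho(-c\cdot 1)\leq \rho(f)\leq \rho(c\cdot 1)= c\cdot 1
\end{align}
holds \emph{everywhere} on $X$. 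Hence $|\rho(f)(x)|\leq c$ for every $x\in X$, which simultaneously shows $\rho(f)\in\mathcal{L}^\infty(X)$ and $\Vert\phi([f]_\sim)\Vert_{\mathcal{L}^\infty(X)}\leq \Vert [f]_\sim\Vert_{\gL^\infty(\nu)}$, i.e.\ $\Vert\phi\Vert\leq 1$.

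The main obstacle is concentrated entirely in the existence of the lifting $\rho$: this is a genuinely deep result that uses the axiom of choice in an essential way—typically a Zorn's-lemma exhaustion extending partial liftings defined on sub-$\sigma$-algebras, together with a density/conditional-expectation argument to pass to the limit—and I would not attempt to reproduce it. Once $\rho$ is granted, every assertion about $\phi$ (well-definedness, linearity, the right-inverse identity, injectivity, and the norm bound) is a short formal consequence, as sketched above, and the completeness and $\sigma$-finiteness hypotheses are used solely to guarantee that $\rho$ exists.
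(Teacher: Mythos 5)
Your proposal is correct and takes essentially the same route as the paper: the paper's proof also consists of invoking the lifting theory of Ionescu Tulcea (deferring the derivation of the stated properties to a citation), which is exactly the classical lifting theorem you use. Your sandwich argument and the formal deductions of linearity, injectivity, the right-inverse identity, and $\Vert\phi\Vert\leq 1$ correctly fill in the details the paper delegates to that reference.
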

The proof is based on lifting theory as developed in \cite{ionescu_tulcea}. For a derivation of the specific result, see \cite[Equation (58)]{general_mercer}. 

In other words, for each equivalence class in $\gL^{\infty} (\nu)$, the operator $\phi$ chooses a bounded representative  in a linear and continuous way. 


\subsection{Proofs of Theorems \ref{thm interp space BSF}, \ref{thm interp space BSF with topology}, and \ref{thm embedding equivalence}}


Let us now introduce a setup which makes it possible to apply the results for direct sums of Banach spaces from Appendix \ref{appendix direct sums}. 

\begin{remark} \label{remark isometric isomorphisms}
	Let $0<\theta<1$ and $1\leq r \leq \infty$ and suppose that $\mu=(\mu_i)_{i \in \naturals}$ is a non-increasing sequence of positive real numbers.  
	Let us define the spaces $E_j := \ell^2( (\mu_i^{-\theta})_{i \in M_j})$. 
	
	Then for $j \in \integers$, let us consider the projection maps $\pi_j$, that map any sequence $(b_i)_{i \in I}$ to the subsequence $(b_i)_{i \in M_j}$. 
	More specifically, we define 
	\begin{align}
		\pi_j^{(r)}: \ell^{(\mu,\theta,r)} \rightarrow E_j, 
		\quad b \mapsto (b_i)_{i \in M_j} \text{.}
	\end{align}
	\labelirgendwo{proof ell spaces Banach}Then by Definition \ref{def ell spaces} and Appendix \ref{appendix direct sums}, we know that 
	\begin{align} \label{formula isometric isomorphism ell spaces to direct sum}
		\pi^{(r)}: \ell^{(\mu,\theta,r)} \rightarrow {\bigoplus_{j \in \integers}}^{(r)} E_j, 
		\quad b \mapsto (\pi_j^{(r)}(b))_{j \in \integers}
	\end{align}
	is an isometric isomorphism, and hence the spaces $\ell^{(\mu,\theta,r)}$ are Banach spaces. 
	
	Now note that by the definition of the $E_j$-inner product, we have 
	\begin{align}
		\skalarprodukt{\pi_j(b)}{\pi_j(a)}_{E_j} = \sum_{i \in M_j} a_i b_i \mu_i^{-\theta}
	\end{align}
	for sequences $a,b$ of real numbers. 
	Hence we find that 
	\begin{align}
		\sum_{j \in \integers} \skalarprodukt{\pi_j(a)}{\pi_j(b)}_{E_j}
		= \sum_{j \in \integers} \sum_{i \in M_j} a_i b_i \mu_i^{-\theta}
		= \sum_{i \in I} a_i b_i \mu_i^{-\theta} \label{formula ell spaces dual pairing}
	\end{align}
	whenever one of the sums converges absolutely. 
	By Lemma \ref{lemma dual of direct sum} and the isometric isomorphism \eqref{formula isometric isomorphism ell spaces to direct sum}, this can be viewed as a dual pairing of $\ell^{(\mu,\theta,r)}$ and $\ell^{(\mu,\theta,r')}$ if $\frac{1}{r}+\frac{1}{r'}=1$. 
\end{remark}

This setup now enables us to prove what can be viewed as a kind of Hölder inequality for the $\ell^{(\mu,\theta,r)}$-spaces. 

\begin{lemma} \label{lemma Hölder for ell theta r}
	Let $0<\theta<1$ and suppose that $1\leq r \leq \infty$ with $\frac{1}{r} + \frac{1}{r'}=1$. 
	Let $\mu=(\mu_i)_{i \in I}$ be a non-increasing sequence of positive real numbers.  
	Then for $b \in \ell^{(\mu,\theta,r)}$ and $a \in \ell^{(\mu,\theta,r')}$ we have 
	\begin{align}
		\sum_{i \in I} | a_i b_i | \mu_i^{-\theta} 
		\leq \Vert a \Vert_{\ell^{(\mu,\theta,r')}} \Vert b \Vert_{\ell^{(\mu,\theta,r)}} \text{.}
	\end{align}
\end{lemma}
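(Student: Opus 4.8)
The plan is to exploit the partition $(M_j)_{j \in \integers}$ of $I$ and reduce the claim to two classical inequalities: a weighted Cauchy--Schwarz inequality on each block $M_j$, followed by Hölder's inequality for the outer $\ell^r$--$\ell^{r'}$ pairing over the index $j$.

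First I would split the sum according to the partition. Since the $M_j$ are pairwise disjoint with $\bigcup_{j} M_j = I$ and every summand is non-negative, I may write
\begin{align}
	\sum_{i \in I} |a_i b_i| \mu_i^{-\theta}
	= \sum_{j \in \integers} \sum_{i \in M_j} |a_i b_i| \mu_i^{-\theta} \text{.}
\end{align}
Next, for each fixed $j$ I would apply the Cauchy--Schwarz inequality to the inner sum, regarding it as the $E_j$-inner product of $(|a_i|)_{i \in M_j}$ and $(|b_i|)_{i \in M_j}$, where $E_j = \ell^2((\mu_i^{-\theta})_{i \in M_j})$ as in Remark \ref{remark isometric isomorphisms}. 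With $c_j$ as in \eqref{formula power space norm inner part}, this yields
\begin{align}
	\sum_{i \in M_j} |a_i b_i| \mu_i^{-\theta}
	\leq \left(\sum_{i \in M_j} a_i^2 \mu_i^{-\theta}\right)^{\frac{1}{2}} \left(\sum_{i \in M_j} b_i^2 \mu_i^{-\theta}\right)^{\frac{1}{2}}
	= c_j(a) \, c_j(b) \text{.}
\end{align}

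Finally I would sum over $j$ and apply Hölder's inequality for the conjugate exponents $r$ and $r'$ to the sequences $(c_j(a))_{j}$ and $(c_j(b))_{j}$:
\begin{align}
	\sum_{j \in \integers} c_j(a) \, c_j(b)
	\leq \Vert (c_j(a))_{j \in \integers} \Vert_{\ell^{r'}} \, \Vert (c_j(b))_{j \in \integers} \Vert_{\ell^{r}}
	= \Vert a \Vert_{\ell^{(\mu,\theta,r')}} \, \Vert b \Vert_{\ell^{(\mu,\theta,r)}} \text{,}
\end{align}
which is the asserted inequality by Definition \ref{def ell spaces}. The arithmetic conventions $\frac{1}{\infty} = 0$ make the endpoint cases $r \in \{1,\infty\}$ work verbatim.

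I expect no serious obstacle here, as the argument is a two-level application of standard estimates: Cauchy--Schwarz in the weighted $\ell^2$-norm defining the $c_j$, and Hölder in the $j$-index. The only point requiring mild care is the bookkeeping of non-negativity and absolute convergence, but since all summands are non-negative the rearrangement in the first step is automatic, and finiteness of the final bound is guaranteed by the hypotheses $a \in \ell^{(\mu,\theta,r')}$ and $b \in \ell^{(\mu,\theta,r)}$. Alternatively, the same estimate could be extracted from the dual pairing \eqref{formula ell spaces dual pairing} together with the duality of direct sums in Lemma \ref{lemma dual of direct sum}, but the direct computation above appears most transparent.
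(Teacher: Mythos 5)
Your proof is correct and follows essentially the same route as the paper: splitting the sum over the partition $(M_j)_{j \in \mathbb{Z}}$, applying Cauchy--Schwarz in the weighted $\ell^2$-norm on each block, and then H\"older's inequality over the index $j$. No gaps; the bookkeeping remarks on non-negativity and the endpoint cases $r \in \{1,\infty\}$ are fine.
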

\begin{proof} \labelirgendwo{proof lemma Hölder for ell theta r}
	By Remark \ref{remark isometric isomorphisms}, for all $a \in \ell^{(\mu,\theta,r')}$ and all $b \in \ell^{(\mu,\theta,r)}$, we have 
	\begin{align}
		\sum_{i \in I} |a_i b_i| \mu_i^{-\theta} 
		&= \sum_{j \in \integers} \sum_{i \in M_j} |a_i b_i| \mu_i^{-\theta} \\
		&\leq \sum_{j \in \integers} \left( \sum_{i \in M_j} a_i^2 \mu_i^{-\theta} \right)^{\frac{1}{2}} \left( \sum_{i \in M_j} b_i^2 \mu_i^{-\theta} \right)^{\frac{1}{2}} \\
		&\leq \left( \sum_{j \in \integers} \left( \sum_{i \in M_j} a_i^2 \mu_i^{-\theta} \right)^{\frac{r'}{2}} \right)^{\frac{1}{r'}} \left( \sum_{j \in \integers} \left( \sum_{i \in M_j} b_i^2 \mu_i^{-\theta} \right)^{\frac{r}{2}} \right)^{\frac{1}{r}} \\
		&= \Vert a \Vert_{\ell^{(\mu,\theta,r')}} \Vert b \Vert_{\ell^{(\mu,\theta,r)}} \text{,}
	\end{align}
	where we first applied Cauchy-Schwarz and then Hölder's inequality. 
\end{proof}

The following lemma is essentially given by an application of Lemma \ref{lemma finite norm by dual evaluation} to the setting introduced in Remark \ref{remark isometric isomorphisms}. 
\begin{lemma} \label{lemma finiteness if all dual pairings finite} \anmerkung{Verallg. von \cite[Thm. 6.1]{general_mercer}}
	Let $0<\theta<1$ and suppose that $1 \leq r \leq \infty$ with $\frac{1}{r}+\frac{1}{r'}=1$. Let furthermore $\mu=(\mu_i)_{i \in I}$ be a non-increasing sequence of positive real numbers and $a=(a_i)_{i \in I}$ be a sequence of real numbers. 
	Then the following statements are equivalent for all $C \geq 0$: 
	\begin{enumerate}
		\item For all sequences $b \in \ell^{(\mu,\theta,r)}$, the sum $\sum_{i \in I} a_i b_i \mu_i^{-\theta} $ converges and it holds that 
		\begin{align} \label{formula dual pairing}
			\left| \sum_{i \in I} a_i b_i \mu_i^{-\theta} \right| \leq C \Vert b \Vert_{\mu, \theta, r} \text{.}
		\end{align}
		
		\item It holds that $a \in \ell^{(\mu,\theta,r')}$ with norm $\Vert a \Vert_{(\mu,\theta,r')} \leq C$. 
	\end{enumerate} 
\end{lemma}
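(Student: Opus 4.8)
The plan is to transport both statements, via the isometric isomorphisms recorded in Remark~\ref{remark isometric isomorphisms}, into the language of direct sums of the Hilbert spaces $E_j := \ell^2((\mu_i^{-\theta})_{i \in M_j})$, where the equivalence is then furnished by the abstract direct-sum result Lemma~\ref{lemma finite norm by dual evaluation}. First I would recall the two isometric isomorphisms $\pi^{(r)}: \ell^{(\mu,\theta,r)} \to {\bigoplus_{j \in \integers}}^{(r)} E_j$ and $\pi^{(r')}: \ell^{(\mu,\theta,r')} \to {\bigoplus_{j \in \integers}}^{(r')} E_j$ from \eqref{formula isometric isomorphism ell spaces to direct sum}, together with the pairing identity \eqref{formula ell spaces dual pairing}, which rewrites $\sum_{i \in I} a_i b_i \mu_i^{-\theta}$ as $\sum_{j \in \integers} \skalarprodukt{\pi_j(a)}{\pi_j(b)}_{E_j}$ whenever one side converges absolutely. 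Since each $E_j$ is a Hilbert space and hence isometrically its own dual, this is exactly the canonical dual pairing between ${\bigoplus_{j \in \integers}}^{(r)} E_j$ and ${\bigoplus_{j \in \integers}}^{(r')} E_j$ provided by Lemma~\ref{lemma dual of direct sum}.

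With this dictionary, condition \emph{(i)} says precisely that the coefficient family $(\pi_j(a))_j$ pairs against every $b \in \ell^{(\mu,\theta,r)}$ with bound $C \Vert b\Vert_{(\mu,\theta,r)}$, while condition \emph{(ii)} says precisely that $(\pi_j(a))_j$ lies in ${\bigoplus_{j \in \integers}}^{(r')} E_j$ with norm at most $C$, because $\pi^{(r')}$ is an isometry. These are the two sides of Lemma~\ref{lemma finite norm by dual evaluation} applied to the family $(E_j)_{j \in \integers}$ and to $(\pi_j(a))_j$, so I would simply invoke that lemma to obtain the equivalence; it is here that the real content lies, since for $r = \infty$ (so $r' = 1$) the space $\ell^{(\mu,\theta,1)}$ is only the associate space and not the full topological dual of $\ell^{(\mu,\theta,\infty)}$, and it is Lemma~\ref{lemma finite norm by dual evaluation} that certifies the pairing characterisation nonetheless recovers exactly the $\ell^{(\mu,\theta,1)}$-norm. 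The surjectivity of $\pi^{(r)}$ ensures that quantifying over all $b \in \ell^{(\mu,\theta,r)}$ in \emph{(i)} corresponds exactly to quantifying over all elements of the direct sum in the abstract lemma, so no test elements are lost under the translation.

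The point that requires genuine care, and which I expect to be the main obstacle, is the coordinate-wise membership together with the convergence clause of \emph{(i)}. In the generality assumed here the bands $M_j$ need not be finite (the hypotheses only ask that $\mu$ be non-increasing and positive, not that $\mu_i \to 0$), so $\pi_j(a) = (a_i)_{i \in M_j}$ need not a priori lie in $E_j$. One extracts $\pi_j(a) \in E_j$ by testing \emph{(i)} against sequences $b$ supported on a single band $M_j$: for such $b$ one has $\Vert b\Vert_{(\mu,\theta,r)} = \Vert \pi_j(b)\Vert_{E_j}$ and the pairing collapses to $\skalarprodukt{\pi_j(a)}{\pi_j(b)}_{E_j}$, whence the self-duality of the Hilbert space $E_j$ forces $\pi_j(a) \in E_j$ with $\Vert \pi_j(a)\Vert_{E_j} \le C$. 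This single-band reduction, together with the absolute convergence supplied in the reverse direction by the Hölder inequality of Lemma~\ref{lemma Hölder for ell theta r}, is exactly what Lemma~\ref{lemma finite norm by dual evaluation} is designed to package; once it is cited, transporting the conclusion back through $\pi^{(r')}$ completes the argument. The implication \emph{(ii)} $\Rightarrow$ \emph{(i)} is the softer half and could equally be read off directly from Lemma~\ref{lemma Hölder for ell theta r}.
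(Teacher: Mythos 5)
Your overall route is the same as the paper's: transport everything through the isometric isomorphisms $\pi^{(r)}$ and $\pi^{(r')}$ of Remark \ref{remark isometric isomorphisms}, rewrite $\sum_{i \in I} a_i b_i \mu_i^{-\theta}$ via the pairing identity \eqref{formula ell spaces dual pairing}, and then invoke the abstract duality theory for direct sums from Appendix \ref{appendix direct sums}. Your handling of coordinate-wise membership (testing (i) against sequences supported in a single band $M_j$) is, if anything, more explicit than the paper on that point, and your remark that \emph{(ii)} $\Rightarrow$ \emph{(i)} can be read off from Lemma \ref{lemma Hölder for ell theta r} is correct.

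The gap is in which abstract result you invoke. Lemma \ref{lemma finite norm by dual evaluation} is purely qualitative: it characterises when $(\pi_j(a))_{j \in \integers}$ lies in ${\bigoplus_{j \in \integers}}^{(r')} E_j$, but it carries no information about the constant $C$ --- the paper points this out immediately after that lemma. Since the statement you must prove is quantitative in both directions (bound $C$ in \emph{(i)} if and only if norm at most $C$ in \emph{(ii)}), citing Lemma \ref{lemma finite norm by dual evaluation} alone does not finish the proof, and your claim that this lemma \enquote{recovers exactly the $\ell^{(\mu,\theta,1)}$-norm} misattributes where that information comes from: it is the isometry statements of Lemma \ref{lemma dual of direct sum} (in particular part (iii) for $r=\infty$, $r'=1$), combined with Remark \ref{remark absoulte values in sums}, that convert membership into equality of the functional norm and the direct-sum norm. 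The paper packages exactly this step as Proposition \ref{proposition finite norm by dual evaluation with constants} and applies that proposition, not the lemma, in its proof. Note also that your single-band reduction cannot substitute for this quantitative step: it yields $\Vert \pi_j(a) \Vert_{E_j} \leq C$ for each $j$ separately, which suffices for \emph{(ii)} when $r'=\infty$ but for $r'<\infty$ says nothing about the $\ell^{r'}$-sum over $j$. Your argument becomes complete, and then coincides with the paper's, once you replace the citation of Lemma \ref{lemma finite norm by dual evaluation} by Proposition \ref{proposition finite norm by dual evaluation with constants} (or reproduce its short derivation from the lemma, the isometry, and Remark \ref{remark absoulte values in sums}).
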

\begin{proof} \labelirgendwo{proof lemma finiteness if all dual pairings finite}
	Let us use the notation from Remark \ref{remark isometric isomorphisms}. By \eqref{formula ell spaces dual pairing}, (i) means that for all $b \in \ell^{(\mu, \theta, r)}$, we have 
	\begin{align}
		\left| \sum_{j \in \integers} \skalarprodukt{\pi_j(a)}{\pi_j^{(r)}(b)}_{E_j} \right|
		= \left| \sum_{i \in I} a_i b_i \mu_i^{-\theta} \right|
		\leq C \Vert b \Vert_{(\mu,\theta,r)} 
		= C \Vert \pi^{(r)} (b) \Vert_{\bigoplus_{j \in \integers}^{(r)} E_j}
	\end{align}
	with absolute convergence of the sums by Remark \ref{remark absoulte values in sums}. 
	As $\pi^{(r)}: \ell^{(\mu,\theta,r)} \rightarrow {\bigoplus_{j \in \integers}}^{(r)} E_j$ is an isometric isomorphism, this means that for all $x \in {\bigoplus_{j \in \integers}}^{(r)} E_j$, it holds that 
	\begin{align}
		\left| \sum_{j \in \integers} \skalarprodukt{\pi_j(a)}{x_j}_{E_j} \right| \leq C \Vert x \Vert_{\bigoplus_{j \in \integers}^{(r)} E_j}
	\end{align} 
	where we replaced $\pi^{(r)}(b)$ by $x$ and applied the isometry property on the right-hand side. 
	An application of Proposition \ref{proposition finite norm by dual evaluation with constants} shows that this is the case if and only if $(\pi_j(a))_{j \in \integers} \in \bigoplus_{j \in \integers}^{(r')} E_j$ with norm $\Vert (\pi_j(a))_{j \in \integers} \Vert_{r'} \leq C$. An application of the isometric isomorphism $\pi^{(r')}$ finally shows that this is equivalent to $a \in \ell^{(\mu,\theta,r')}$ with norm $\Vert a \Vert_{(\mu,\theta,r')} \leq C$, which is (ii). 
\end{proof}

\todo[inline,color=cyan]{Ab hier Veränderungen: }

In order to prove Theorem \ref{thm interp space BSF}, we will use liftings from Theorem \ref{thm lifting}. As this requires a complete measure space, which is not necessarily given in Theorem \ref{thm interp space BSF}, we consider the following lemma. 

\begin{lemma} \label{lemma completion}
	Let $(X, \mathcal{A}, \nu)$ be a measure space and $(X, \hat{\mathcal{A}}, \hat{\nu})$ be is its completion. 
	Then the following statements hold true: 
	\begin{enumerate}
		\item For every $\hat{\mathcal{A}}$-measurable $f: X \rightarrow \reals$, there exists a $\mathcal{\nu}$-null set $N$ such that $\mathbbm{1}_{X \setminus N} f$ is $\mathcal{A}$-measurable. 
		\item The operator 
		\begin{align}
			\gL^{\infty} (\nu) \rightarrow \gL^\infty (\hat{\nu}), \quad [f]_\sim \mapsto [f]_{\hat{\sim}}
		\end{align}
		is a well-defined isometric isomorphism, where $[\, \cdot \,]_{\hat{\sim}}$ denotes equivalence classes with respect to $\hat{\nu}$. 
	\end{enumerate}
\end{lemma}
\begin{proof}
	\ada{i} By \cite[Proposition 2.2.5]{Cohn_measure_theory}, there are $\mathcal{A}$-measurable functions $f_0$ and $f_1$ on $X$ with $f_0 \leq f \leq f_1$ and $f_0=f_1$ $\nu$-almost everywhere. Choosing $N=\{x \in X \, | \, f_0(x) \neq f_1(x)\}$ shows that $\mathbbm{1}_{X \setminus N} f = \mathbbm{1}_{X \setminus N} f_0$  is $\mathcal{A}$-measurable. 
	
	\ada{ii} We first prove that the mapping is well-defined. 
	As $\mathcal{A} \subset \hat{\mathcal{A}}$, we find that any $\mathcal{A}$-measurable $f:X \rightarrow \reals$ is $\hat{\mathcal{A}}$-measurable, which justifies writing $[f]_{\hat{\sim}}$. 
	To see that the choice of representative of $[f]_\sim$ is irrelevant, let $f$ and $g$ be $\mathcal{A}$-measurable functions with $[f]_\sim = [g]_\sim$. Then $[f-g]_\sim = 0$, hence there is a $\nu$-null set $N$ such that $f(x) - g(x) = 0$ for all $x \in X \setminus N$. But by definition of the completion, $N$ is a $\hat{\nu}$-zero set as well, which shows that $[f-g]_{\hat{\sim}}=0$. Hence we get $[f]_{\hat{\sim}} = [g]_{\hat{\sim}}$. 
	
	The linearity of the mapping is obvious. 
	Moreover, for an $\mathcal{A}$-measurable $f:X \rightarrow \reals$ with $[f]_\sim \in \gL^\infty (\nu)$, we have 
	\begin{align}\Vert [f]_{\sim} \Vert_{\gL^\infty(\nu)}
		&= \inf \{B\geq 0 \, | \, \nu (\{x \in X \, | \, f(x)>B\}) = 0\} \\
		&= \inf \{B\geq 0 \, | \, \hat{\nu} (\{x \in X \, | \, f(x)>B\}) = 0\} \\
		&= \Vert [f]_{\hat{\sim}} \Vert_{\gL^\infty(\hat{\nu})} \text{,}
	\end{align}
	which shows that the operator is isometric and hence injective. 
	As any $[f]_{\hat{\sim}} \in \gL^\infty (\hat{\nu})$ can be reached by mapping the element $[\mathbbm{1}_{X \setminus N} f]_\sim$, surjectivity follows as well. 
\end{proof}

The following lemma gives some details on the construction of $\overline{H}_{\nu}^{(\theta,r)}$ in Theorem \ref{thm interp space BSF}. 
\begin{lemma} \label{lemma interp space BSF}
	Let $(X,\mathcal{A},\nu)$, $H$, $\mu_i$, and $e_i$ be as in Assumption \refirgendwo{assumption RKHS strong}{H+}\anmerkung{$\sigma$-endlich und $\nu$-vollständig, da Lifting verwendet wird}. 
	Moreover, let $0<\theta<1$ and $1\leq r \leq \infty$ be such that the embedding $[H]_\sim^{(\theta,r)} \hookrightarrow \gL^\infty (\nu)$ exists and is continuous. 
	Then for all $i \in I$, there exist representatives $\overline{e}_i \in \mathcal{L}^\infty (X)$ of $[e_i]_\sim$ such that the following statements are true: 
	\begin{enumerate}
		\item The series $\sum_{i \in I} b_i \overline{e}_i $ converges uniformly for all $b \in \ell^{(\mu,\theta,r)}$. 
		\item 
		There is a $\nu$-null set $N \subset X$ such that $\sum_{i \in I} b_i \overline{e}_i (x) = \sum_{i \in I} b_i e_i (x)$ for all $b \in \ell^{(\mu,\theta,r)}$ and all $x \in X \setminus N$. 
		\item The space 
		\begin{align}
			\overline{H}_{\nu}^{(\theta,r)}
			:= \left\{ \sum_{i \in I} b_i \overline{e}_i \, \middle| \, b \in \ell^{(\mu,\theta,r)} \right\}
		\end{align}
		is a BSF, where the BSF-norm is inherited from the $\ell^{(\mu,\theta,r)}$-norm of the Fourier coefficients $b_i$. 
		\item The mapping 
		\begin{align}
			[H]_\sim^{(\theta,r)} \rightarrow \overline{H}_{\nu}^{(\theta,r)}, \quad 
			\sum_{i \in I} b_i [e_i]_\sim \mapsto \sum_{i \in I} b_i \overline{e}_i
		\end{align}
		is an isometric isomorphism. 
		\item It holds that  
		\begin{align}
			\Vert \overline{H}_{\nu}^{(\theta,r)} \hookrightarrow \mathcal{L}^\infty (X) \Vert 
			= \Vert [H]_\sim^{(\theta,r)} \hookrightarrow \gL^\infty (\nu) \Vert \text{.}
		\end{align}
	\end{enumerate}		
\end{lemma}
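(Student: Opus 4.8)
The plan is to reduce everything to a single \emph{master estimate} for finite partial sums, obtained from the fact that the lifting $\phi$ of Theorem \ref{thm lifting} is isometric onto its range. First I would record that $\Vert \phi(h) \Vert_{\mathcal{L}^\infty(X)} = \Vert h \Vert_{\gL^\infty(\nu)}$ for every $h \in \gL^\infty(\nu)$: the bound ``$\leq$'' is just $\Vert \phi \Vert \leq 1$, while ``$\geq$'' follows from $[\phi(h)]_\sim = h$ together with $\essup \leq \sup$. Writing $\kappa := \Vert H_\nu^{(\theta,r)} \hookrightarrow \gL^\infty(\nu) \Vert$ and using linearity of $\phi$, for any \emph{finite} $F \subset I$ this gives
\[
\Bigl\Vert \sum_{i \in F} b_i \overline{e}_i \Bigr\Vert_{\mathcal{L}^\infty(X)}
= \Bigl\Vert \phi\Bigl( \sum_{i \in F} b_i [e_i]_\sim \Bigr) \Bigr\Vert_{\mathcal{L}^\infty(X)}
= \Bigl\Vert \sum_{i \in F} b_i [e_i]_\sim \Bigr\Vert_{\gL^\infty(\nu)}
\leq \kappa \, \bigl\Vert (b_i)_{i \in F} \bigr\Vert_{(\mu,\theta,r)} \text{,}
\]
where the last step is Definition \ref{def power space} combined with the embedding hypothesis. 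This estimate simultaneously controls convergence and point evaluations.

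For \ada{i} I would argue via the Cauchy criterion. When $1 \leq r < \infty$ the tail norms $\Vert (b_i)_{i > m} \Vert_{(\mu,\theta,r)}$ vanish as $m \to \infty$ (each $M_j$ is finite since $\mu_i \to 0$, so dominated convergence over $j$ applies), and the master estimate turns this directly into uniform convergence. The case $r = \infty$ is where I expect the \emph{main obstacle}: there the tail $\ell^{(\mu,\theta,\infty)}$-norms need not vanish, so the master estimate alone does not force uniform convergence. Here I would instead pass to the pointwise bound: by Theorem \ref{thm embedding equivalence} the essential-supremum condition holds with constant $\kappa$, and the lifting should upgrade it to a genuine pointwise bound $\sup_{x} \Vert (\overline{e}_i(x) \mu_i^\theta)_{i \in I} \Vert_{(\mu,\theta,r')} \leq \kappa$, obtainable from the master estimate and the dual characterisation of the $\ell^{(\mu,\theta,r')}$-norm in Lemma \ref{lemma finiteness if all dual pairings finite}. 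Combined with the Hölder inequality of Lemma \ref{lemma Hölder for ell theta r}, this yields absolute pointwise convergence with the uniform bound $\bigl| \sum_{i} b_i \overline{e}_i(x) \bigr| \leq \kappa \Vert b \Vert_{(\mu,\theta,r)}$; promoting this to \emph{uniform} convergence for $r = \infty$ (equi-smallness of the tails in $x$) is the delicate point where I expect the real work to lie.

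Granting \ada{i}, parts \ada{ii}--\ada{iv} are essentially bookkeeping. For \ada{ii}, finite partial sums satisfy $\sum_{i \in F} b_i \overline{e}_i = \sum_{i \in F} b_i e_i$ $\nu$-almost everywhere because $\overline{e}_i = e_i$ a.e.; passing to the limit (uniform on the left, $\gL^2(\nu)$ on the right, both compatible through the class map) identifies the limits a.e. For \ada{iii} and \ada{iv}, the $[e_i]_\sim$ form an ONS in $\gL^2(\nu)$, so the Fourier coefficients of an element of $\overline{H}_\nu^{(\theta,r)}$ are unique; hence $b \mapsto \sum_i b_i \overline{e}_i$ is a bijection onto $\overline{H}_\nu^{(\theta,r)}$, the norm $\Vert \sum_i b_i \overline{e}_i \Vert := \Vert b \Vert_{(\mu,\theta,r)}$ is well defined, and transporting the $\ell^{(\mu,\theta,r)}$-norm makes $\overline{H}_\nu^{(\theta,r)}$ a Banach space isometric to $\ell^{(\mu,\theta,r)}$. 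The master estimate gives $\vert \delta_x(\sum_i b_i \overline{e}_i) \vert \leq \kappa \Vert b \Vert_{(\mu,\theta,r)}$, so point evaluations are bounded and $\overline{H}_\nu^{(\theta,r)}$ is a BSF; the map in \ada{iv} is an isometry because both spaces carry the same norm $\Vert b \Vert_{(\mu,\theta,r)}$ in their Fourier coefficients.

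Finally, for \ada{v} the key is that each $g = \sum_i b_i \overline{e}_i$ lies in the range of the lifting: since $\sum_i b_i [e_i]_\sim$ converges in $\gL^\infty(\nu)$ (for $r<\infty$ by the tail argument, and by continuity of $\phi$ one gets $g = \phi([g]_\sim)$), the isometry of the first paragraph yields $\Vert g \Vert_{\mathcal{L}^\infty(X)} = \Vert [g]_\sim \Vert_{\gL^\infty(\nu)}$ \emph{exactly}. As \ada{iv} gives $\Vert g \Vert_{\overline{H}_\nu^{(\theta,r)}} = \Vert [g]_\sim \Vert_{H_\nu^{(\theta,r)}}$, taking the supremum of $\Vert g \Vert_{\mathcal{L}^\infty(X)} / \Vert g \Vert_{\overline{H}_\nu^{(\theta,r)}}$ over $g \neq 0$ coincides with that of $\Vert [g]_\sim \Vert_{\gL^\infty(\nu)} / \Vert [g]_\sim \Vert_{H_\nu^{(\theta,r)}}$, so the two operator norms agree. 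Throughout, the single genuinely hard ingredient is the uniform convergence in \ada{i} for $r = \infty$, on which the clean identifications in \ada{ii}--\ada{v} tacitly depend.
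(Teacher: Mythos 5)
Your treatment of $1 \leq r < \infty$ is correct and follows essentially the paper's route: the paper proves \emph{(i)} by first establishing that $\sum_{i} b_i [e_i]_\sim$ converges in $H_{\nu}^{(\theta,r)}$ (its Lemma \ref{lemma convergence in interp norm}) and then pushing this through the continuous embedding into $\gL^\infty(\nu)$ and the continuous lifting $\phi$; your master estimate on finite partial sums plus vanishing tails is the same mechanism. In fact your tail argument (each $M_j$ finite, dominated convergence in $j$) is sounder than the paper's own justification, which deduces from $\ell^{(\mu,\theta,r)} \hookrightarrow \ell^2(I)$ the bound $\Vert (b_i\mathbbm{1}_{i\geq n+1})\Vert_{(\mu,\theta,r)} \leq C\Vert (b_i\mathbbm{1}_{i\geq n+1})\Vert_{\ell^2(I)}$, although that embedding yields the reverse inequality. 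Your parts \emph{(ii)}--\emph{(iv)} match the paper's bookkeeping, and your \emph{(v)}, via the exact isometry $\Vert \phi(h)\Vert_{\mathcal{L}^\infty(X)} = \Vert h \Vert_{\gL^\infty(\nu)}$ on the range of the lifting, is if anything cleaner than the paper's two one-sided estimates. One caveat: inside this lemma you cannot quote Theorem \ref{thm embedding equivalence} \emph{(ii)} $\Rightarrow$ \emph{(i)}, since the paper proves that direction \emph{using} this very lemma; that appeal is circular in the paper's architecture.

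The obstacle you flag at $r=\infty$ is a genuine gap in your proposal --- but it is equally a gap in the paper, hidden in Lemma \ref{lemma convergence in interp norm}, which is false for $r=\infty$; and no argument can close it, because part \emph{(i)} itself fails for $r=\infty$. Concretely: take $X=\naturals$, $\mu_m := 4^{-m}$, $\nu(\{m\}) := \mu_m^{\theta}$, and the diagonal kernel $k(m,m') := \delta_{mm'}\,\mu_m/\nu(\{m\})$. Then the assumptions hold, $T_k$ has eigenvalues $\mu_m$ with eigenfunctions $e_m = \nu(\{m\})^{-1/2}\mathbbm{1}_{\{m\}}$, every $M_j$ has at most one element, and since all singletons have positive mass, $\overline{e}_m = e_m$. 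For $f = \sum_i b_i[e_i]_\sim$ one has $|f(m)| = |b_m|\,\nu(\{m\})^{-1/2} \leq \Vert b \Vert_{(\mu,\theta,\infty)}$, so the embedding $H_\nu^{(\theta,\infty)} \hookrightarrow \gL^\infty(\nu)$ exists with norm at most $1$. Yet for $b_m := \mu_m^{\theta/2}$ we get $\Vert b \Vert_{(\mu,\theta,\infty)} = 1$ while the partial sums are $\sum_{m \leq n} b_m \overline{e}_m = \mathbbm{1}_{\{1,\ldots,n\}}$, with pointwise limit the constant function $1$: the tails have supremum norm $1$ for every $n$, so there is no uniform (not even $\gL^\infty(\nu)$-) convergence, and the series also fails to converge in $H_\nu^{(\theta,\infty)}$, refuting Lemma \ref{lemma convergence in interp norm} for $r=\infty$. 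So your suspicion was exactly right: the pointwise absolute convergence you extract from the Hölder inequality is the most one can have for $r=\infty$. Both your proposal and the paper's proof are valid only for $1 \leq r < \infty$; for $r=\infty$, statement \emph{(i)} (and the construction of $\overline{H}_\nu^{(\theta,r)}$ in \emph{(iii)}) must be weakened to pointwise convergence.
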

\begin{proof}
	Let us begin with some preparations. To this end, let $(X, \hat{\mathcal{A}}, \hat{\nu})$ be the completion of the measure space $(X, \mathcal{A}, \nu)$, and let $[\,\cdot \,]_{\hat{\sim}}$ denote equivalence classes with respect to $\hat{\nu}$. 
	As the RKHS $H$ has a bounded kernel, the eigenfunctions $e_i$ are bounded as well. This gives $e_i \in \mathcal{L}^\infty(\nu) \subset \mathcal{L}^\infty(\hat{\nu})$. 
	Now by Theorem \ref{thm lifting}, the lifted eigenfunctions $\breve{e}_i := \phi([e_i]_{\hat{\sim}})$
	are $\hat{\mathcal{A}}$-measurable, bounded, and we know that for each $i \in I$, there is a $\hat{\nu}$-null set $N'_i$ such that $e_i(x)=\breve{e}_i(x)$ for all $x \in X \setminus N'_i$. As $I$ is at most countable,  $\bigcup_{i \in I} N'_i$ is again a $\hat{\nu}$-null set. By definition of the  completion, there hence is a $\nu$-null set $N'$ with $\bigcup_{i \in I} N'_i \subset N'$. 
	Additionally, by Lemma \ref{lemma completion}, there are $\nu$-null sets $N''_i$ such that $\mathbbm{1}_{X \setminus N''_i} \breve{e}_i$ is $\mathcal{A}$-measurable. 
	
	Let us consider the $\nu$-null set $N:= N' \cup \bigcup_{i \in I} N''_i$. Then for all $i \in I$, it holds that $\overline{e}_i := \mathbbm{1}_{X \setminus N} \breve{e}_i$ is $\mathcal{A}$-measurable with $\overline{e}_i (x) = e_i (x)$ for all $x \in X \setminus N$. 
	
	Let us denote the linear operator 
	\begin{align}
		A: \gL^{\infty} (\nu) \rightarrow \mathcal{L}^\infty (X), \quad [f]_\sim \mapsto \mathbbm{1}_{X \setminus N} \phi([f]_{\hat{\sim}})
	\end{align}
	which is well-defined by Lemma \ref{lemma completion} and has operator norm at most $1$ due to Lemma \ref{lemma completion} and Theorem \ref{thm lifting}. Note that $A [e_i]_\sim =\overline{e}_i$ are $\mathcal{A}$-measurable functions on $X$. 
	
	Furthermore, we write 
	\begin{align}
		\kappa:=\Vert [H]_\sim^{(\theta,r)} \hookrightarrow \gL^\infty(\nu) \Vert \text{.} 
	\end{align}
	
	\ada{i} Let $b \in \ell^{(\mu,\theta,r)}$. 
	Then by Lemma \ref{lemma convergence in interp norm}, we know that  $\sum_{i \in I} b_i [e_i]_{\sim}$ converges in $[H]_\sim^{(\theta,r)}$. 
	Due to the continuity of the embedding $[H]_\sim^{(\theta,r)} \hookrightarrow \gL^\infty (\nu)$, it also converges in $\gL^\infty (\nu)$, and we have 
	\begin{align} \label{formula l infty inequality 1}
		\left\Vert \sum_{i \in I} b_i [e_i]_{\sim} \right\Vert_{\gL^{\infty}(\nu)}
		\leq \kappa \left\Vert \sum_{i \in I} b_i [e_i]_{\sim} \right\Vert_{[H]_\sim^{(\theta,r)}}
		=  \kappa \Vert b \Vert_{(\mu, \theta, r)} \text{.}
	\end{align}
	By continuity of the operator $A: \gL^{\infty} (\nu) \rightarrow \mathcal{L}^\infty (X)$, we thus find 
	\begin{align} \label{formula lifting of series}
		A \left( \sum_{i \in I} b_i [e_i]_{\sim} \right) 
		= \sum_{i \in I} b_i A ([e_i]_{\sim})
		= \sum_{i \in I} b_i \overline{e}_i \text{,}
	\end{align}
	so $\sum_{i \in I} b_i \overline{e}_i$ converges in $\mathcal{L}^\infty(X)$, that is uniformly. 
	
	\ada{ii} We know that $\overline{e}_i (x) = e_i(x)$ holds for all $i \in I$ and all $x \in X \setminus N$, which yields the assumption by (i). 
	
	\ada{iii} As $\ell^{(\mu,\theta,r)}$ is a Banach space by Remark \ref{remark isometric isomorphisms}, it follows that $\overline{H}_{\nu}^{(\theta,r)}$ is a Banach space as well. 
	By the uniform convergence in (i), $\overline{H}_{\nu}^{(\theta,r)}$ consists of functions. 
	It remains to show that the evaluation functionals are  continuous. 
	To this end, let us fix a $b \in \ell^{(\mu,\theta,r)}$. Because $A$ has operator norm at most $1$, we find by \eqref{formula lifting of series} that 
	\begin{align}
		\left\Vert \sum_{i \in I} b_i \overline{e}_i \right\Vert_{\mathcal{L}^{\infty}(X)}
		= \left\Vert A \left( \sum_{i \in I} b_i [e_i]_{\sim} \right) \right\Vert_{\mathcal{L}^{\infty}(X)}
		\leq \left\Vert \sum_{i \in I} b_i [e_i]_{\sim} \right\Vert_{\gL^{\infty}(\nu)} \text{.} \label{formula l infty inequality 2} 
	\end{align}
	Now let $\delta_x: \mathcal{L}^\infty(X) \rightarrow \reals, f \mapsto f(x)$ be the evaluation functional at some fixed $x \in X$. 
	Combining \eqref{formula l infty inequality 1} and \eqref{formula l infty inequality 2}, we find 
	\begin{align} \label{formula evaluation functionals bounded}
		\left| \delta_x \left(\sum_{i \in I} b_i \overline{e}_i\right) \right|
		= \left| \left(\sum_{i \in I} b_i \overline{e}_i\right) (x) \right|
		\leq 
		\left\Vert \sum_{i \in I} b_i \overline{e}_i \right\Vert_{\mathcal{L}^{\infty}(X)}
		\leq \kappa \Vert b \Vert_{(\mu, \theta, r)} \text{.}
	\end{align}
	\anmerkung{Banachraum ist es weil $\ell^{(\mu,\theta,r)}$ einer ist, siehe (iv)}
	
	\ada{iv} This follows as the norms on $[H]_\sim^{(\theta,r)}$ and on $\overline{H}_{\nu}^{(\theta,r)}$ are both inherited from the $\ell^{(\mu,\theta,r)}$-norm of the unique\anmerkung{als eind $\gL^2$-Fourierkoeff.} Fourier coefficients. 
	
	\ada{v} Note that by using \eqref{formula l infty inequality 1} and \eqref{formula l infty inequality 2}, we find 
	\begin{align}
		\left\Vert \sum_{i \in I} b_i \overline{e}_i \right\Vert_{\mathcal{L}^{\infty}(X)}
		\leq \kappa \Vert b \Vert_{(\mu, \theta, r)} 
		= \kappa \left\Vert \sum_{i \in I} b_i e_i \right\Vert_{[H]_\sim^{(\theta,r)}} \text{,}
	\end{align}
	which shows \enquote{$\leq$}. For \enquote{$\geq$}, we use 
	\begin{align}
		\Vert \overline{H}_{\nu}^{(\theta,r)} \hookrightarrow \mathcal{L}^\infty (X) \Vert 
		= \sup_{\Vert f \Vert_{\overline{H}_{\nu}^{(\theta,r)}} \leq 1} \Vert f \Vert_{\mathcal{L}^\infty (X)}
		\geq \sup_{\Vert f \Vert_{\overline{H}_{\nu}^{(\theta,r)}} \leq 1} \Vert [f]_\sim \Vert_{\gL^\infty (\nu)}
		= \Vert [H]_\sim^{(\theta,r)} \hookrightarrow \gL^\infty (\nu) \Vert
	\end{align}
	where the last step follows by $\Vert f \Vert_{\overline{H}_{\nu}^{(\theta,r)}} = \Vert [f]_\sim \Vert_{[H]_\sim^{(\theta,r)}}$. 
\end{proof}

\begin{proof}[Proof of Theorem \ref{thm interp space BSF}]\labelirgendwo{proof thm interp space BSF} 
	The assertion follows directly from Lemma \ref{lemma interp space BSF} by using the inverse of the isometric isomorphism from (iv). 
	The additional statement follows from (ii). 
\end{proof}


In Theorem \ref{thm interp space BSF}, we describe by means of lifting theory how to find representatives of $[H]_\sim^{(\theta,r)}$ that give rise to a Banach space of functions. However, note that we already chose a set of \enquote{canonical} representatives in Definition \ref{def power space}. Let us analyse under which circumstances this set of \enquote{canonical} representatives forms a BSF. 
To this end, let us apply a topological technique similar to \cite{KLE}.

\begin{proof}[Proof of Theorem \ref{thm interp space BSF with topology}] \labelirgendwo{proof thm interp space BSF with topology}
	Let us denote 
	\begin{align}
		\kappa:=\Vert [H]_\sim^{(\theta,r)} \hookrightarrow \gL^\infty(\nu) \Vert \text{.} 
	\end{align}
	
	\ada{i} Let $b \in \ell^{(\mu,\theta,r)}$. 
	Then by Lemma \ref{lemma convergence in interp norm}, we know that  $\sum_{i \in I} b_i [e_i]_{\sim}$ converges in $[H]_\sim^{(\theta,r)}$. 
	Due to the continuity of the embedding $[H]_\sim^{(\theta,r)} \hookrightarrow \gL^\infty (\nu)$, it converges in $\gL^\infty (\nu)$ as well, and we have 
	\begin{align} \label{formula l infty inequality 1 cont}
		\left\Vert \sum_{i \in I} b_i [e_i]_{\sim} \right\Vert_{\gL^{\infty}(\nu)}
		\leq \kappa \left\Vert \sum_{i \in I} b_i [e_i]_{\sim} \right\Vert_{[H]_\sim^{(\theta,r)}}
		=  \kappa \Vert b \Vert_{(\mu, \theta, r)} \text{.}
	\end{align}
	Now the partial sums of $\sum_{i \in I} b_i e_i$ are $\tau(H)$-continuous and $\sum_{i \in I} b_i [e_i]_\sim = [ \sum_{i \in I} b_i e_i]_\sim$ converges in $\gL^\infty (\nu)$-norm. 
	By Lemma \ref{lemma Cb closed}, we hence find that $\sum_{i \in I} b_i e_i$ converges in $C_b(X, \tau(H))$, hence uniformly. 
	
	\ada{ii} First of all, note that the coefficients $b_i$ are uniquely determined by the function $\sum_{i \in I} b_i e_i$, as the $[e_i]_\sim$ are $\gL^2(\nu)$-orthogonal. 
	
	As $\ell^{(\mu,\theta,r)}$ is a Banach space by Remark \ref{remark isometric isomorphisms}, it follows that ${H}_{\nu}^{(\theta,r)}$ is a Banach space as well. 
	By the uniform convergence in (i), ${H}_{\nu}^{(\theta,r)}$ consists of functions. 
	It remains to show that the evaluation functionals are  continuous. 
	To this end, let us fix a $b \in \ell^{(\mu,\theta,r)}$. 
	Then for all $x \in X$, we consider $\delta_x: \mathcal{L}^\infty(X) \rightarrow \reals, f \mapsto f(x)$ to find that 
	\begin{align} \label{formula evaluation functionals bounded cont}
		\begin{split}
			\left| \delta_x \left(\sum_{i \in I} b_i e_i\right) \right|
		= \left| \left(\sum_{i \in I} b_i e_i\right) (x) \right|
		\leq 
		\left\Vert \sum_{i \in I} b_i e_i \right\Vert_{\mathcal{L}^{\infty}(X)}
		= \left\Vert \sum_{i \in I} b_i [e_i]_\sim \right\Vert_{\gL^{\infty}(\nu)}
		\leq \kappa \Vert b \Vert_{(\mu, \theta, r)} \text{,}
		\end{split}
	\end{align}
	where for second to last step, we used Corollary \ref{corollary L infty = L infty}, and the last inequality is due to \eqref{formula l infty inequality 1 cont}. 
	
	\ada{iii} This follows as the norms on $[H]_\sim^{(\theta,r)}$ and on ${H}_{\nu}^{(\theta,r)}$ are both inherited from the $\ell^{(\mu,\theta,r)}$-norm of the unique\anmerkung{als eind $\gL^2$-Fourierkoeff.} Fourier coefficients. 
	
	\ada{iv} In the proof of (i), we found that ${H}_{\nu}^{(\theta,r)}$ consists of $\tau(H)$-continuous and bounded functions. 
%

	Note that by using the isometric isomorphism $\alpha: {H}_{\nu}^{(\theta,r)} \rightarrow [H]_\sim^{(\theta,r)}$ from (iii) and the isometry $\beta: \gC_b(X, \tau(H)) \rightarrow \gL^\infty (\nu), f \mapsto [f]_\sim$, the following diagram commutes: 
	\begin{center}
		\begin{tikzcd}
			{{H}_{\nu}^{(\theta,r)}} \arrow[hookrightarrow,rr,"\Id"] \arrow[dd, "\simeq","\alpha"'] &  & {\gC_b(X, \tau(H))} \arrow[dd,"\beta"]                 \\
			&  &                    \\
			{[H]_\sim^{(\theta,r)}} \arrow[hookrightarrow,rr,"\Id"]                 &  & {\gL^\infty (\nu)}
		\end{tikzcd}
	\end{center}
	But as $\Id: [H]_\sim^{(\theta,r)} \rightarrow \gL^\infty(\nu)$ is bounded with operator norm $\kappa$, it follows that $\Id: {H}_{\nu}^{(\theta,r)} \rightarrow \gC_b(X, \tau(H))$ is bounded with operator norm $\kappa$ as well.

	\ada{v} Let $f \in H$. Then by Lemma \ref{lemma H sequence space unitary operator}, there is a sequence $(b_i)_{i \in I} \in \ell^2(\mu^{-1})$ such that $[f]_\sim=\sum_{i \in I} b_i [e_i]_\sim$. It holds that  
	\begin{align}
		\ell^2(\mu^{-1}) \hookrightarrow [\ell^2(I), \ell^2(\mu^{-1})]_{(\theta,r)} \triangleq \ell^{(\mu,\theta,r)} \text{,}
	\end{align}
	where for the first step, we apply Lemma \ref{lemma interpolation space subsets} and for the second, we apply Lemma \ref{lemma weighted Lp interpolation} to the counting measure on $I$ and the weight sequence $\mu^{-1}$ to find that the spaces $[\ell^2(I), \ell^2(\mu^{-1})]_{(\theta,r)}$ and $\ell^{(\mu,\theta,r)}$ are norm equivalent. 
	
	We hence find that $(b_i)_{i \in I} \in \ell^{(\mu,\theta,r)}$. Now by (i), the sum $\sum_{i \in I} b_i e_i$ converges uniformly in $C_b(X,\tau(H))$. Because $f$ itself is $\tau(H)$-continuous as well, we have $f=\sum_{i \in I} b_i e_i$, as almost everywhere agreeing $\tau(H)$-continuous functions agree everywhere by Corollary \ref{corollary L infty = L infty}. This shows that $f=\sum_{i \in I} b_i e_i \in H^{(\theta,r)}$. 
\end{proof}


\begin{proof}[Proof of Theorem \ref{thm embedding equivalence}] \labelirgendwo{proof thm embedding equivalence}
	First of all, note that by Definition \ref{def power space}, an arbitrary $[g]_{\sim} \in [H]_\sim^{(\theta,r)}$ can be written as a series 
	\begin{align} \label{formula series representation of g}
		[g]_{\sim} = \sum_{i \in I} b_i [e_i]_{\sim}
	\end{align}
	for a suitable $b \in \ell^{(\mu, \theta, r)}$, where the convergence is in $[H]_\sim^{(\theta,r)}$ by Lemma \ref{lemma convergence in interp norm}. In addition, it holds that 
	\begin{align}
		\Vert [g]_{\sim} \Vert_{[H]_\sim^{(\theta,r)}}
		= \Vert b \Vert_{(\mu,\theta,r)} \text{.}
	\end{align}
	For the rest of this proof, we write $f_i(x) := e_i(x) \mu_i^{\theta}$ for all $i \in I$ and all $x \in X$. 
	
	Concerning the different assumptions, note that Assumptions \refirgendwo{assumption RKHS strong}{H+}  and \refirgendwo{assumption T}{T} imply Assumption \refirgendwo{assumption RKHS}{H}. 
	
	\enquote{(i) $\Rightarrow$ (ii)} Obvious. 
	
	\enquote{(ii) $\Rightarrow$ (iii)}
	By Property (ii), there is a $\nu$-null set $N$ such that 
	\begin{align}
		\left\Vert \left(f_i(x)\right)_{i \in I}\right\Vert_{(\mu,\theta,r')} 
		\leq \kappa
	\end{align}
	holds for all $x \in X \setminus N$. 		
	Hence it follows for $x \in X \setminus N$ and all $b \in \ell^{(\mu,\theta,r)}$ that 
	\begin{align}
		\begin{split}
			\sum_{i \in I} | b_i e_i(x) |
		&= \sum_{i \in I} | b_i f_i(x) | \mu_i^{-\theta} \\
		&\leq  \Vert (f_i(x))_{i \in I} \Vert_{(\mu,\theta,r')} \Vert b \Vert_{(\mu,\theta,r)} \\
		&\leq \kappa \,  \Vert b \Vert_{(\mu,\theta,r)} \text{,}  \label{formula sum bounded outside N}
		\end{split}
	\end{align}
	where the second to last inequality is due to Lemma \ref{lemma Hölder for ell theta r}. 
	Hence for infinite $I$, $s_n:=\sum_{i = 1}^n b_i e_i(x)$ converges for all $x \in X \setminus
	 N$ where $n \rightarrow \infty$. 
	In addition, we know that this sum converges to $[g]_{\sim}$ in $[H]_\sim^{(\theta,r)}$, hence it converges in $\gL^2(\nu)$ as we have $[H]_\sim^{(\theta,r)} \hookrightarrow \gL^2(\nu)$ by Remark \ref{remark convergence in L2}. This $\gL^2(\nu)$-convergence implies that a subsequence $s_{n_k} = \sum_{i=1}^{n_k} b_i e_i (x)$ converges to $g(x)$ for all $x \in X \setminus \widetilde{N}$, where $\tilde{N}$ is another $\nu$-zero set. 
	Together with the mentioned almost everywhere convergence of $s_n$, this yields that $g(x) = \sum_{i \in I} b_i e_i(x)$ is true for all $x \in X \setminus (N \cup \widetilde{N})$.  
	Consequently, we find 
	\begin{align}
		\Vert [g]_{\sim} \Vert_{\gL^{\infty}(\nu)}
		&= \inf \left\{ B \geq 0 \, : \, \exists M \text{ with } \nu(M)=0 \text{ and }\sup_{x \in X \setminus M}\left|g(x) \right|\leq B \right\} \\
		&\leq \sup_{x \in X \setminus (N \cup \tilde{N})} \left| g(x) \right| \\
		&= \sup_{x \in X \setminus (N \cup \tilde{N})} \left| \sum_{i \in I} b_i e_i (x) \right| \\
		& \leq \kappa \Vert b \Vert_{(\mu,\theta,r)} \\
		&= \kappa \Vert [g]_{\sim} \Vert_{[H]_\sim^{(\theta,r)}}
	\end{align}
	by an application of \eqref{formula sum bounded outside N}. This proves (iii). 
	
	\enquote{(iii) $\Rightarrow$ (ii)} under Assumption \refirgendwo{assumption RKHS strong}{H+}: 
	By Lemma \ref{lemma interp space BSF}, for almost all $x \in X$ and all $b \in \ell^{(\mu,\theta,r)}$, we have 
	\begin{align}
		\left| \sum_{i \in I} b_i f_i(x) \mu_i^{-\theta} \right|
		= \left| \sum_{i \in I} b_i e_i(x) \right| 
		= \left| \sum_{i \in I} b_i \overline{e}_i(x) \right|
		\leq \kappa \left\Vert \sum_{i \in I} b_i \overline{e}_i \right\Vert_{\overline{H}_{\nu}^{(\theta,r)}}
		= \kappa \Vert b \Vert_{(\mu, \theta, r)} \text{, }
	\end{align}
	where $\overline{e}_i$ is defined as in Lemma \ref{lemma interp space BSF}. Note that the sums are absolutely convergent by Remark \ref{remark absoulte values in sums}.  
	For almost all $x \in X$, an application of Lemma \ref{lemma finiteness if all dual pairings finite} to $a_i := f_i(x)$ yields that $(f_i(x))_{i \in I} \in \ell^{(\mu,\theta,r')}$ with norm $\Vert f_i(x) \Vert_{(\mu,\theta,r')} \leq \kappa$. 
	This shows (ii). 
	
	\enquote{(ii) $\Rightarrow$ (i)} under Assumption \refirgendwo{assumption T}{T}: 
	An application of Lemma \ref{lemma non-negative lower semicontinuous} shows that the function $X \rightarrow [0,\infty]$ given by 
	\begin{align}
		x \mapsto \left\Vert \left( e_i(x) \mu_i^{\theta} \right)_{i \in I}\right\Vert_{(\mu,\theta,r')} 
	\end{align}
	is non-negative and lower semicontinuous. Hence by Lemma \ref{lemma sup is esssup}, the essential supremum and the supremum agree. 
	
	\enquote{(iii) $\Rightarrow$ (ii)} under Assumption \refirgendwo{assumption T}{T}: 
	By Theorem \ref{thm interp space BSF with topology}, for all $x \in X$ and all $b \in \ell^{(\mu,\theta,r)}$, we have 
	\begin{align}
		\left| \sum_{i \in I} b_i f_i(x) \mu_i^{-\theta} \right|
		= \left| \sum_{i \in I} b_i e_i(x) \right| 
		\leq \kappa \left\Vert \sum_{i \in I} b_i e_i \right\Vert_{H_\nu^{(\theta,r)}}
		= \kappa \Vert b \Vert_{(\mu, \theta, r)} \text{.}
	\end{align}
	Thus, for all $x \in X$, another application of Lemma \ref{lemma finiteness if all dual pairings finite} to $a_i := f_i(x)$ yields that $(f_i(x))_{i \in I} \in \ell^{(\mu,\theta,r')}$ with norm $\Vert f_i(x) \Vert_{(\mu,\theta,r')} \leq \kappa$. 
	This shows (ii). 	
\end{proof}

\todo[inline,color=cyan]{Bis hier kleinere Veränderungen}

\subsection{Proofs concerning applications}

\begin{lemma} \label{lemma estimate theta beta norms}
	Suppose that $(\mu_i)_{i \in I}$ is a non-increasing sequence of positive numbers, $0 < \beta < 1$, $0< \theta < 1$ and $1\leq r \leq \infty$. Let $(\alpha_i)_{i \in I}$ and $(b_i)_{i \in I}$ be sequences of real numbers. 
	Then it holds that 
	\begin{align}
		\Vert (\alpha_i b_i)_{i \in I} \Vert_{(\mu,\theta,r)} 
		\leq \sup_{i \in I} \left( | \alpha_i | \mu_i^{\frac{\beta-\theta}{2}}\right) \Vert b \Vert_{(\mu,\beta,r)} \text{.}
	\end{align}
\end{lemma}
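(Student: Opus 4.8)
The plan is to exploit that the partition $(M_j)_{j \in \integers}$ underlying both $\Vert \cdot \Vert_{(\mu,\theta,r)}$ and $\Vert \cdot \Vert_{(\mu,\beta,r)}$ depends only on the sequence $\mu$ (through the weight $\mu^{-1}$), and not on the smoothness parameter appearing as the exponent in \eqref{formula power space norm inner part}. Consequently, the inner blocks are indexed by the same sets $M_j$ for the exponent $\theta$ and for the exponent $\beta$, which permits a block-by-block comparison. First I would reduce the assertion to a pointwise-in-$j$ estimate: writing $S := \sup_{i \in I}\bigl(|\alpha_i|\,\mu_i^{\frac{\beta-\theta}{2}}\bigr)$, it suffices to prove, for every $j \in \integers$, the inequality
\begin{align}
\left( \sum_{i \in M_j} \alpha_i^2 b_i^2 \mu_i^{-\theta} \right)^{\frac{1}{2}}
\leq S \left( \sum_{i \in M_j} b_i^2 \mu_i^{-\beta} \right)^{\frac{1}{2}} \text{,}
\end{align}
since the full $(\mu,\theta,r)$- and $(\mu,\beta,r)$-norms are, by \eqref{formula power space norm}, just the $\ell^r$-norms of these block quantities over $j$, and $\ell^r$ is monotone under pointwise domination.

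The core computation is the trivial factorisation $\mu_i^{-\theta} = \mu_i^{\beta-\theta}\,\mu_i^{-\beta}$, which lets me write
\begin{align}
\sum_{i \in M_j} \alpha_i^2 b_i^2 \mu_i^{-\theta}
= \sum_{i \in M_j} \left( |\alpha_i|\,\mu_i^{\frac{\beta-\theta}{2}} \right)^2 b_i^2 \mu_i^{-\beta}
\leq S^2 \sum_{i \in M_j} b_i^2 \mu_i^{-\beta} \text{,}
\end{align}
where each factor $|\alpha_i|\,\mu_i^{\frac{\beta-\theta}{2}}$ is bounded by its supremum $S$ taken over $i \in I \supseteq M_j$. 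Taking square roots yields the desired block inequality.

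To finish, I would apply $\Vert \cdot \Vert_{\ell^r}$ to both sides of the block inequality over $j \in \integers$; monotonicity of the $\ell^r$-norm, valid for all $1 \leq r \leq \infty$ including the supremum case $r=\infty$, then gives
\begin{align}
\Vert (\alpha_i b_i)_{i \in I} \Vert_{(\mu,\theta,r)}
\leq S \, \Vert b \Vert_{(\mu,\beta,r)}
= \sup_{i \in I}\left( |\alpha_i|\,\mu_i^{\frac{\beta-\theta}{2}} \right) \Vert b \Vert_{(\mu,\beta,r)} \text{,}
\end{align}
which is the claim. I do not expect a serious obstacle here; the only subtlety to flag is that the \emph{same} blocks $M_j$ serve both norms, so that the uniform factor $S$ can be pulled out of every block at once — this is exactly where the $\mu$-only dependence of the partition is used. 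Degenerate cases, where some block sum or $S$ is infinite, are covered automatically, as the inequality is then trivial.
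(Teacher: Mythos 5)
Your proof is correct and follows essentially the same route as the paper's: both rest on the factorisation $\mu_i^{-\theta} = \mu_i^{\beta-\theta}\mu_i^{-\beta}$, pulling the supremum $\sup_{i \in I}\bigl(|\alpha_i|\,\mu_i^{\frac{\beta-\theta}{2}}\bigr)$ out of each block $M_j$ (which indeed depends only on $\mu$, not on the exponent), and then taking the $\ell^r$-norm over $j$. Your explicit block-by-block reduction and appeal to monotonicity of $\ell^r$ is just a slightly more spelled-out presentation of the paper's single chain of (in)equalities, with the added benefit of making the $r=\infty$ case visibly covered.
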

\begin{proof}
	
	\begin{align}
		\Vert (\alpha_i b_i)_{i \in I} \Vert_{(\mu,\theta,r)}
		&= \left( \sum_{j \in \integers} \left( \sum_{i \in M_j} \big(\alpha_i b_i\big)^2 \mu_i^{-\theta} \right)^{\frac{r}{2}} \right)^{\frac{1}{r}} \\
		&= \left( \sum_{j \in \integers} \left( \sum_{i \in M_j} \alpha_i^2 \mu_i^{\beta-\theta} b_i^2 \mu_i^{-\beta} \right)^{\frac{r}{2}} \right)^{\frac{1}{r}} \\
		&\leq \sup_{i \in I} \left( |\alpha_i |\mu_i^{\frac{\beta-\theta}{2}} \right)   \left( \sum_{j \in \integers} \left( \sum_{i \in M_j}  b_i^2 \mu_i^{-\beta} \right)^{\frac{r}{2}} \right)^{\frac{1}{r}} \\
		&= \sup_{i \in I} \left( |\alpha_i |\mu_i^{\frac{\beta-\theta}{2}} \right) \Vert b \Vert_{(\mu,\beta,r)}
	\end{align}
\end{proof}

\begin{proof}[Proof of Proposition \ref{prop regularisation error in interp norm}] \labelirgendwo{proof prop regularisation error in interp norm}
	For $i \in I$, we write $b_i := \langle f_{L,P}^* , [e_i]_{\sim} \rangle_{\gL^2(P_X)}$. 
	Then we find that 
	\begin{align} \label{formula series f*}
		f_{L,P}^* = \sum_{i \in I} b_i [e_i]_{\sim}
	\end{align}
	with convergence in $\gL^2(P_X)$ as $f^*_{L,P} \in \gL^2(P_X)$ by Remark \ref{remark convergence in L2}. 
	In addition, Theorem \ref{thm equivalent norm} yields a $C_4 \geq 0$\anmerkung{Problem: $C_4$ kann von $\mu$ abhängen, siehe \ref{thm equivalent norm} (das steht bei Ingo als unabhängig)} independent of $f_{L,P}^*$ such that 
	\begin{align} \label{formula upper bound on b}
		\Vert b \Vert_{(\mu,\beta,r)} 
		\leq C_4 \Vert f_{L,P}^* \Vert_{[\gL^2(X),[H]_{\sim}]_{\beta,r}} \text{.}
	\end{align}
	Now note that we have $(\mu_i g_\lambda (\mu_i) b_i)_{i \in I} \in \ell^2(\mu^{-1})$ because $(b_i)_{i \in I}$ is an $\ell^2$-sequence and $g_\lambda$ is bounded by Definition \ref{def admissible filter function}. 
	Hence by \eqref{formula spectral regularisation}, we see that 
	\begin{align}
		f_{P,\lambda,g}
		= \sum_{i \in I} {\mu_i g_\lambda (\mu_i) b_i} e_i
	\end{align}
	converges in $H$. 
	Hence it follows that 	
	\begin{align} \label{formula orthogonal series f_P,lambda,g}
		[f_{P,\lambda,g}]_{\sim}
		= I_k f_{P,\lambda,g}
		= I_k \left( \sum_{i \in I} \mu_i g_\lambda (\mu_i) b_i e_i\right)
		= \sum_{i \in I} \mu_i g_\lambda (\mu_i) b_i [e_i]_{\sim}
	\end{align}
	with convergence in $\gL^2(P_X)$. 
	
	\ada{i} By the $\gL^2(P_X)$-orthogonal series of $f_{L,P}^*$ in \eqref{formula series f*} and the one of $[f_{P,\lambda,g}]_{\sim}$ in \eqref{formula orthogonal series f_P,lambda,g}, we conclude 
	\begin{align}
		[f_{P, \lambda, g}]_{\sim} - f_{L,P}^*
		= \sum_{i \in I} (1-\mu_i g_\lambda (\mu_i)) b_i [e_i]_{\sim}
	\end{align}
	with convergence in $\gL^2(P_X)$. 
	
	Then by the norm equivalence from Theorem \ref{thm equivalent norm}, we find a $C_5 \geq 0$\anmerkung{Analoges Problem mit $C_5$} independent of $f_{L,P}^*$ such that we can estimate as follows via Lemma \ref{lemma estimate theta beta norms}: 
	\begin{align}
		\Vert [f_{P, \lambda, g}]_{\sim} - f_{L,P}^* \Vert_{[\gL^2(X),[H]_{\sim}]_{\theta,r}}
		&\leq C_5 \Vert (1-\mu_i g_\lambda (\mu_i)) b_i \Vert_{(\mu,\theta,r)} \\
		&\leq C_5 \sup_{i \in I} \left( \big|1-\mu_i g_\lambda (\mu_i)\big|\mu_i^{\frac{\beta-\theta}{2}} \right) \Vert b \Vert_{(\mu,\beta,r)}
	\end{align}
	From this, the assertion follows by the upper bound on $\Vert b \Vert_{(\mu,\beta,r)}$ given in \eqref{formula upper bound on b}. 
	
	\ada{ii} By the norm equivalence in Theorem \ref{thm equivalent norm}, we find a $C_6 \geq 0$\anmerkung{Analoges Problem mit $C_6$} independent of $f_{L,P}^*$ such that we can estimate as follows via Lemma \ref{lemma estimate theta beta norms}: 
	\begin{align}
		\Vert [f_{P, \lambda, g}]_{\sim} \Vert_{[\gL^2(X),[H]_{\sim}]_{\theta,r}}
		&\leq C_6 \Vert \mu_i g_\lambda (\mu_i) b_i \Vert_{(\mu,\theta,r)} \\
		&\leq C_6 \sup_{i \in I} \left( |\mu_i g_\lambda (\mu_i)|\mu_i^{\frac{\beta-\theta}{2}} \right) \Vert b \Vert_{(\mu,\beta,r)}
	\end{align}
	From this, the estimate follows by the upper bound on $\Vert b \Vert_{(\mu,\beta,r)}$ given in \eqref{formula upper bound on b}. 
\end{proof}
	
	\newcounter{appendix}
	\setcounter{appendix}{0}
	\renewcommand\thesection{\Alph{appendix}}
	\stepcounter{appendix}
\section{Appendix: Interpolation spaces}

In this appendix, we collect some auxiliary results on interpolation spaces, which are probably known to experts. 

We begin by proving that isometric isomorphisms can be carried over from Banach spaces $E_2 \hookrightarrow E_1$ to the interpolation spaces $[E_1,E_2]_{\theta,r}$. 
For a map $f: A \rightarrow B$ and $A' \subset A$, $B'  \subset B$, we use the notation $f |_{A'\rightarrow B'}$ for the map $A' \rightarrow B'$ given by $x \mapsto f(x)$ in case it is well-defined. 
\begin{lemma} \label{lemma interpolation space isometric isomorphism}\anmerkung{Kann man auch allgemeiner machen: $E_2 \not\hookrightarrow E_1$}
	Let $E_2 \hookrightarrow E_1$ and $F_2 \hookrightarrow F_1$ be Banach spaces. Suppose there is a bounded linear operator $\phi: E_1 \rightarrow F_1$ such that $\phi$ and $\phi |_{E_2 \rightarrow F_2}$ are isometric isomorphisms. Then for $0<\theta<1$ and $1\leq r \leq \infty$, the following map is an isometric isomorphism: 
	\begin{align}
		\phi |_{[E_1,E_2]_{\theta,r} \rightarrow [F_1,F_2]_{\theta,r}}
	\end{align}
\end{lemma}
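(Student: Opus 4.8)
The plan is to show that the $K$-functional is preserved exactly under the isometric isomorphism $\phi$, from which the equality of the interpolation norms follows immediately by the very definition of the $[\,\cdot\,,\,\cdot\,]_{\theta,r}$-norm as a weighted integral (or supremum) of the $K$-functional. The key observation is that $\phi$ being an isometry on both $E_1$ and $E_2$ simultaneously means that the two quantities appearing in the infimum defining $K$ are each preserved, so the whole infimum is preserved.

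First I would recall the definition \eqref{formula K-functional}: for $x \in E_1$ and $t>0$,
\begin{align}
	K(x,t,E_1,E_2) = \inf_{y \in E_2}\left( \Vert x - y \Vert_{E_1} + t \Vert y \Vert_{E_2} \right).
\end{align}
Fix $x \in E_1$ and $t>0$. For any $y \in E_2$, since $\phi|_{E_1 \rightarrow F_1}$ is an isometry we have $\Vert x - y \Vert_{E_1} = \Vert \phi(x) - \phi(y) \Vert_{F_1}$, and since $\phi|_{E_2 \rightarrow F_2}$ is an isometry we have $\Vert y \Vert_{E_2} = \Vert \phi(y) \Vert_{F_2}$. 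Because $\phi|_{E_2 \rightarrow F_2}$ is moreover surjective onto $F_2$, as $y$ ranges over $E_2$ the image $\phi(y)$ ranges over all of $F_2$, so the infimum over $y \in E_2$ transforms into the infimum over all $z = \phi(y) \in F_2$. This gives
\begin{align}
	K(x,t,E_1,E_2) = \inf_{z \in F_2}\left( \Vert \phi(x) - z \Vert_{F_1} + t \Vert z \Vert_{F_2} \right) = K(\phi(x), t, F_1, F_2).
\end{align}

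Next I would feed this pointwise-in-$t$ equality of $K$-functionals into the norm formulas. For $1 \le r < \infty$, the integrand $t^{-\theta r - 1} K^r(x,t,E_1,E_2)$ equals $t^{-\theta r -1} K^r(\phi(x),t,F_1,F_2)$ for every $t>0$, so the integrals agree and $\Vert x \Vert_{[E_1,E_2]_{\theta,r}} = \Vert \phi(x) \Vert_{[F_1,F_2]_{\theta,r}}$; the case $r=\infty$ is identical with the supremum in place of the integral. In particular $x \in [E_1,E_2]_{\theta,r}$ if and only if $\phi(x) \in [F_1,F_2]_{\theta,r}$, so $\phi$ restricts to a well-defined isometry between these spaces. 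Finally, surjectivity of $\phi|_{[E_1,E_2]_{\theta,r}\rightarrow[F_1,F_2]_{\theta,r}}$ follows by running the same argument on $\phi^{-1}: F_1 \rightarrow E_1$, which is again an isometry on both pairs; given any $w \in [F_1,F_2]_{\theta,r}$ its preimage $\phi^{-1}(w)$ lies in $[E_1,E_2]_{\theta,r}$ and maps to $w$. This completes the proof that $\phi|_{[E_1,E_2]_{\theta,r}\rightarrow[F_1,F_2]_{\theta,r}}$ is an isometric isomorphism.

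I do not expect any serious obstacle here: the result is a soft functorial statement, and the only point requiring a moment's care is the change of variables in the infimum, which relies precisely on $\phi|_{E_2 \rightarrow F_2}$ being \emph{onto} $F_2$ (not merely isometric). Linearity of $\phi$ is used to write $\phi(x)-\phi(y)=\phi(x-y)$ so that the $F_1$-isometry applies to the difference. If anything, the mildly delicate bookkeeping is making sure the restriction $\phi|_{[E_1,E_2]_{\theta,r}\rightarrow[F_1,F_2]_{\theta,r}}$ is genuinely well defined as a map between the two interpolation spaces before asserting it is an isomorphism, but this is handled by the equality of norms established above.
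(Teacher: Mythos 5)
Your proposal is correct and follows essentially the same route as the paper's own proof: both establish the pointwise identity $K(x,t,E_1,E_2)=K(\phi(x),t,F_1,F_2)$ via linearity of $\phi$ and the change of variables $z=\phi(y)$ in the infimum (using surjectivity of $\phi|_{E_2\rightarrow F_2}$), then deduce equality of the interpolation norms, and finally obtain surjectivity by running the same argument on $\phi^{-1}$. No gaps; your explicit remark on where surjectivity and linearity enter is a nice touch that the paper leaves implicit.
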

\begin{proof} \anmerkung{Insbesondere für allgemeineren Fall nutze den alten abstrakten Beweis}
	Let us begin by showing that $\phi |_{[E_1,E_2]_{\theta,r} \rightarrow [F_1,F_2]_{\theta,r}}$ is an isometry. To this end note that for all $x \in E_1$ and $t > 0$ we have 
	\begin{align}
		K(x,t,E_1,E_2) 
		&= \inf_{ y \in E_2 } \left( \Vert x-y \Vert_{E_1} + t \Vert y \Vert_{E_2} \right) \\
		&= \inf_{ y \in E_2 } \left( \Vert \phi (x-y) \Vert_{F_1} + t \Vert \phi (y) \Vert_{F_2} \right) \\
		&= \inf_{ z \in F_2 } \left( \Vert \phi (x) - z \Vert_{F_1} + t \Vert z \Vert_{F_2} \right) \\
		&= K(\phi(x),t,F_1,F_2)
		\text{.}
	\end{align}
	This shows that $\Vert \phi (x) \Vert_{[F_1,F_2]_{\theta,r}} = \Vert x \Vert_{[E_1,E_2]_{\theta,r}}$, that is $\phi |_{[E_1,E_2]_{\theta,r} \rightarrow [F_1,F_2]_{\theta,r}}$ is isometric. 
	Finally, applying the same argument to $\phi^{-1}_{E_i \rightarrow F_i}$ shows that $\phi^{-1} |_{[F_1,F_2]_{\theta,r} \rightarrow [E_1,E_2]_{\theta,r}}$ is also isometric. Obviously, it is the inverse to $\phi |_{[E_1,E_2]_{\theta,r} \rightarrow [F_1,F_2]_{\theta,r}}$. 
\end{proof}

Let us collect some properties of interpolation spaces for the situation considered here, that is $E_2 \hookrightarrow E_1$: 
\begin{lemma} \label{lemma interpolation space subsets}
	Let $E_2 \hookrightarrow E_1$ be Banach spaces. Suppose that $0 < \theta_1 \leq \theta_2<1$ and $1\leq r_1 \leq r_2 \leq \infty$. Then the following properties hold: \anmerkung{Geht auch als gro\ss e Mengenungleichung}
	\begin{enumerate}
		\item $[E_1,E_2]_{\theta_1,r_1} \hookrightarrow E_1$
		\item $E_2 \hookrightarrow [E_1,E_2]_{\theta_1,r_1}$
		\item $ [E_1,E_2]_{\theta_2,r_1} \hookrightarrow [E_1,E_2]_{\theta_1,r_1}$
		\item $ [E_1,E_2]_{\theta_1,r_1} \hookrightarrow [E_1,E_2]_{\theta_1,r_2}$
	\end{enumerate}
\end{lemma}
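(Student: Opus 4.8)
The plan is to derive all four embeddings from a handful of elementary properties of the $K$-functional $K(x,t) := K(x,t,E_1,E_2) = \inf_{y \in E_2}\left( \Vert x-y\Vert_{E_1} + t\Vert y\Vert_{E_2}\right)$, where I fix a constant $c \geq 1$ with $\Vert y\Vert_{E_1} \leq c\Vert y\Vert_{E_2}$ for all $y \in E_2$, coming from $E_2 \hookrightarrow E_1$. First I would record the following facts: (a) $t \mapsto K(x,t)$ is non-decreasing and $K(x,t) \leq \Vert x\Vert_{E_1}$ (take $y=0$); (b) for $t \geq c$ one even has $K(x,t) = \Vert x\Vert_{E_1}$, since every competitor satisfies $\Vert x-y\Vert_{E_1} + t\Vert y\Vert_{E_2} \geq \Vert x-y\Vert_{E_1} + (t/c)\Vert y\Vert_{E_1} \geq \Vert x\Vert_{E_1}$; and (c) for $x \in E_2$ one has $K(x,t) \leq t\Vert x\Vert_{E_2}$ (take $y=x$). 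Throughout I would use, writing $\Vert x\Vert_{\theta,r}$ for $\Vert x\Vert_{[E_1,E_2]_{\theta,r}}$, that this norm is the $\gL^r(\gd t/t)$-norm of $\Phi(t) := t^{-\theta}K(x,t)$, and that membership in the interpolation space is just finiteness of this norm among elements of $E_1$.

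For (i) I would use (b): restricting the defining integral to $t \geq c$ gives $\Vert x\Vert_{\theta_1,r_1}^{r_1} \geq \Vert x\Vert_{E_1}^{r_1}\int_c^\infty t^{-\theta_1 r_1 - 1}\,\gd t$, a positive multiple of $\Vert x\Vert_{E_1}^{r_1}$ since $\theta_1 r_1 > 0$, which yields the claim (with the obvious $\sup$-variant for $r_1 = \infty$). For (ii), take $x \in E_2$ and combine (a) and (c) into $K(x,t) \leq \min\left( t\Vert x\Vert_{E_2}, c\Vert x\Vert_{E_2}\right)$; splitting the integral at $t = c$, the lower piece converges because $\theta_1 < 1$ and the upper piece because $\theta_1 > 0$, each being a finite multiple of $\Vert x\Vert_{E_2}^{r_1}$.

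For (iii) I would split the $\theta_1$-integral at $t = 1$. On $(0,1]$ the pointwise bound $t^{-\theta_1} \leq t^{-\theta_2}$ shows that this part of $\Vert x\Vert_{\theta_1,r_1}$ is dominated by $\Vert x\Vert_{\theta_2,r_1}$. On $[1,\infty)$ I would use $K(x,t) \leq \Vert x\Vert_{E_1}$ from (a), so that this tail is a finite multiple of $\Vert x\Vert_{E_1}$ (integrability again uses $\theta_1 > 0$), and then invoke property (i) with the parameters $(\theta_2,r_1)$ to bound $\Vert x\Vert_{E_1}$ by $\Vert x\Vert_{\theta_2,r_1}$. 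Adding the two pieces gives the embedding, with a routine $\sup$-version for $r_1 = \infty$.

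The genuinely interpolation-theoretic step, and the one I expect to be the main obstacle, is (iv). Writing the norms as $\Vert \Phi\Vert_{\gL^{r}(\gd t/t)}$, the point is to show $\Vert x\Vert_{\theta_1,r_2} \leq C\Vert x\Vert_{\theta_1,r_1}$ for $r_1 \leq r_2$; this is false for arbitrary functions on the infinite measure space $\left( (0,\infty), \gd t/t\right)$ and works only because of the monotonicity (a) of $K$. First I would prove the endpoint bound $\Vert x\Vert_{\theta_1,\infty} \leq C\Vert x\Vert_{\theta_1,r_1}$: since $K(x,\cdot)$ is non-decreasing, for any fixed $s$ one has $\Phi(t) \geq 2^{-\theta_1}\Phi(s)$ for all $t \in [s,2s]$, so $\int_s^{2s}\Phi(t)^{r_1}\,\gd t/t \geq 2^{-\theta_1 r_1}(\ln 2)\,\Phi(s)^{r_1}$, which controls $\Phi(s)$ by $\Vert\Phi\Vert_{\gL^{r_1}(\gd t/t)}$ uniformly in $s$. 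With this in hand, the general case follows from the elementary interpolation inequality $\Vert\Phi\Vert_{r_2} \leq \Vert\Phi\Vert_{\infty}^{1-r_1/r_2}\Vert\Phi\Vert_{r_1}^{r_1/r_2}$ on $\left( (0,\infty),\gd t/t\right)$ combined with the endpoint bound, while the case $r_2 = \infty$ is exactly the endpoint bound itself.
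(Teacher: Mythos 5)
Your proof is correct, but it takes a genuinely different route from the paper, which disposes of all four parts by citation: parts (i) and (ii) are obtained from Bennett--Sharpley (Chapter 5, Proposition 1.8) after observing that $E_2 \hookrightarrow E_1$ forces $E_1 + E_2 = E_1$ and $E_1 \cap E_2 = E_2$, part (iii) is quoted from Bergh--L\"ofstr\"om (3.4.1 (d)), and part (iv) from Bennett--Sharpley (Chapter 5, Proposition 1.10). You instead give a self-contained argument from elementary $K$-functional facts, and all of it checks out: your fact (b) (saturation $K(x,t)=\Vert x\Vert_{E_1}$ for $t \geq c$) is exactly where the ordered-couple hypothesis enters; parts (i)--(iii) are then routine splittings of the defining integral; and your treatment of (iv) --- first the endpoint bound $\Vert x\Vert_{\theta_1,\infty} \leq C \Vert x\Vert_{\theta_1,r_1}$ via monotonicity of $K$ on intervals $[s,2s]$, then the log-convexity inequality $\Vert \Phi\Vert_{r_2} \leq \Vert \Phi\Vert_{\infty}^{1-r_1/r_2}\Vert \Phi\Vert_{r_1}^{r_1/r_2}$ --- is precisely the standard proof behind the cited Proposition 1.10, correctly reproduced, including the important caveat that a naive $\gL^{r_1} \subset \gL^{r_2}$ inclusion fails on the infinite measure space $\left((0,\infty), \gd t/t\right)$ and that monotonicity of $K$ is what rescues the argument. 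What the paper's route buys is brevity and placement of the lemma inside the general theory of interpolation couples (where $E_1+E_2$ and $E_1 \cap E_2$ are the natural objects); what your route buys is a fully self-contained proof with explicit constants, which in addition makes visible how the standing assumption $E_2 \hookrightarrow E_1$ degenerates the couple and simplifies every estimate.
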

\begin{proof}
	\ada{i} This follows by \cite[Proposition 1.8 in Chapter 5]{bennet_sharpley} because we have $E_1 + E_2 = E_1$ due to the embedding $E_2 \hookrightarrow E_1$. 
	
	\ada{ii} This follows by \cite[Proposition 1.8 in Chapter 5]{bennet_sharpley} because we have $E_1 \cap E_2 = E_2$ due to the embedding $E_2 \hookrightarrow E_1$. 
	
	\ada{iii} See \cite[3.4.1 (d)]{bergh_loefstroem}, where the notation is clarified in \cite[page VII]{bergh_loefstroem}\anmerkung{$\subset$ hei\ss t dort stetig eingebettet}. 
	
	\ada{iv} This follows by \cite[Proposition 1.10 in Chapter 5]{bennet_sharpley}. 
\end{proof}

\begin{lemma} \label{lemma interpolation space direct sum}
	Let $H$ be a Hilbert space and let $H_1 \subset H$ be a closed subspace. 
	Suppose the Hilbert space $H_2$ satisfies $H_2 \hookrightarrow H_1$. Then for $0<\theta<1$ and $1\leq r \leq \infty$, it holds that 
	\begin{align}
		[H, H_2]_{\theta,r} = [H_1, H_2]_{\theta,r}
	\end{align}
	and the norms of both spaces are equal. 
\end{lemma}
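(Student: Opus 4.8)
The plan is to work directly with the $K$-functional \eqref{formula K-functional} and exploit the Hilbert space structure of $H$. Since $H_1$ is a \emph{closed} subspace of the Hilbert space $H$, I have the orthogonal decomposition $H = H_1 \oplus H_1^\perp$, and I write every $x \in H$ as $x = Px + (I-P)x$, where $P$ denotes the orthogonal projection onto $H_1$. The proof then splits into two claims: that the two $K$-functionals agree on $H_1$, and that the interpolation space $[H, H_2]_{\theta,r}$ cannot contain any element with a nontrivial component in $H_1^\perp$.

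For the first claim I would observe that $H_2 \hookrightarrow H_1$ means the infimum defining $K(x, t, H, H_2)$ and the one defining $K(x, t, H_1, H_2)$ range over the \emph{same} set $H_2$. Moreover, for $x \in H_1$ and any $y \in H_2 \subset H_1$ we have $x - y \in H_1$, and since $H_1$ carries the norm inherited from $H$, the quantity $\Vert x - y \Vert_H$ equals $\Vert x - y \Vert_{H_1}$. Term by term the two infima therefore coincide, giving $K(x, t, H, H_2) = K(x, t, H_1, H_2)$ for all $x \in H_1$ and all $t > 0$; consequently the interpolation norms agree on $H_1$.

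For the second claim, suppose $x_0 := (I-P)x \neq 0$. For any $y \in H_2 \subset H_1$, orthogonality gives $\Vert x - y \Vert_H^2 = \Vert Px - y \Vert_H^2 + \Vert x_0 \Vert_H^2 \geq \Vert x_0 \Vert_H^2$, so $K(x, t, H, H_2) \geq \Vert x_0 \Vert_H > 0$ for \emph{every} $t > 0$. Feeding this uniform lower bound into the norm, the factor $t^{-\theta r - 1}$ (respectively $t^{-\theta}$ for $r = \infty$) makes the integral over $(0,1)$ diverge (respectively the supremum near $t = 0$ blow up), so $x \notin [H, H_2]_{\theta,r}$. Hence $[H, H_2]_{\theta,r} \subset H_1$. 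Combining the two claims then yields both the set equality and the equality of norms: any $x$ in $[H_1,H_2]_{\theta,r}$ lies in $H_1 \subset H$ with a matching finite norm, while any $x$ in $[H,H_2]_{\theta,r}$ lies in $H_1$ by the second claim and again has a matching norm.

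I expect the second claim --- showing that the orthogonal complement is excluded --- to be the only genuinely non-formal step, and its crux is the divergence of $\int_0^1 t^{-\theta r - 1}\,\gd t$ caused by the strictly positive lower bound on the $K$-functional as $t \to 0$. Note that the Hilbert structure is used only for $H$ (to obtain the orthogonal splitting and the Pythagorean identity); the role of $H_2$ being a Hilbert space is merely that it is an admissible Banach endpoint.
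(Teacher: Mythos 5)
Your proposal is correct and follows essentially the same route as the paper's own proof: both use the orthogonal decomposition $H = H_1 \oplus H_1^\perp$ with the Pythagorean identity to get the uniform lower bound $K(x,t,H,H_2) \geq \Vert x_0 \Vert_H$, conclude divergence of the interpolation norm (so $[H,H_2]_{\theta,r} \subset H_1$), and then observe that the two $K$-functionals coincide on $H_1$ because the infima range over the same set $H_2$ and the norms agree there. The only cosmetic difference is the order of the two claims and that the paper cites its Lemma \ref{lemma interpolation space subsets} for $[H_1,H_2]_{\theta,r} \subset H_1$, which you obtain directly from the definition.
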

\begin{proof} \anmerkung{Für $r<\infty$ geht der Beweis mit Dichtheit vom kleinen Raum im Interpolationsraum schneller. Die gilt aber für $r=\infty$ nicht, siehe \cite[Thm. 3.4.2]{bergh_loefstroem}. }
	Our first goal is to show 
	\begin{align} \label{formula union is in H1}
		[H, H_2]_{\theta,r} \cup [H_1, H_2]_{\theta,r} \subset H_1 \text{.}
	\end{align}
	To this end, we first note that $[H_1, H_2]_{\theta,r} \subset H_1$ by Lemma \ref{lemma interpolation space subsets}. 	
	
	For the proof of $[H, H_2]_{\theta,r} \subset H_1$ we pick an $x \in [H, H_2]_{\theta,r}$. As the latter space is included in $H = H_1 \oplus H_1^\perp$ by Lemma \ref{lemma interpolation space subsets}, we consider the unique decomposition $x=x_1+x_0$ with $x_1 \in H_1$ and $x_0 \in H_1^\perp$. 
	It suffices to show that $x_0=0$. To this end, we assume the converse, that is  $\Vert x_0 \Vert_{H} > 0$. Then for $t > 0$, it follows that 
	\begin{align}
		K(x,t,H, H_2)
		&= \inf_{y \in H_2} \left( \Vert x-y \Vert_{H} + t \Vert y \Vert_{H_2} \right) \\
		&=  \inf_{y \in H_2} \left( \Vert x_1-y+x_0 \Vert_{H} + t \Vert y \Vert_{H_2} \right) \\
		&=  \inf_{y \in H_2} \left( \sqrt{ \Vert x_1 - y \Vert_{H}^2 + \Vert x_0 \Vert_{H}^2} + t \Vert y \Vert_{H_2} \right) \\
		&\geq \Vert x_0 \Vert_H \text{.}
	\end{align}
	Hence for $1 \leq r < \infty$, we find that $x \notin [H, H_2]_{\theta,r}$ as 
	\begin{align}
		\Vert x \Vert_{[H, H_2]_{\theta,r}} 
		&= \left( \int_0^{\infty}  t^{-\theta r -1} K^r(x,t, H, H_2)  \gd t \right)^{\frac{1}{r}} \\
		&\geq \left( \int_0^{\infty} t^{-\theta r -1} \Vert x_0 \Vert_H^r  \gd t \right)^{\frac{1}{r}} \\
		&= \infty \text{.}
	\end{align}
	For $r=\infty$, we find the same result by estimating
	\begin{align}
		\Vert x \Vert_{[H, H_2]_{\theta,\infty}} 
		= \sup_{t>0} \left( t^{-\theta} K(x,t, H, H_2) \right)
		\geq \sup_{t>0} \left( t^{-\theta} \Vert x_0 \Vert_H \right)
		= \infty \text{.}
	\end{align}
	Hence we find a contradiction to $x \in [H, H_2]_{\theta,r}$. This shows the desired $x_0=0$. 
	
	Let us now fix an $x \in [H, H_2]_{\theta,r} \cup [H_1, H_2]_{\theta,r}$. 	
	By \eqref{formula union is in H1}, we note that $x \in H_1$ and hence we find 
	\begin{align}
		K(x,t,H, H_2)
		&= \inf_{y \in H_2} \left( \Vert x-y \Vert_{H} + t \Vert y \Vert_{H_2} \right) \\
		&= \inf_{y \in H_2} \left( \Vert x-y \Vert_{H_1} + t \Vert y \Vert_{H_2} \right) 
		= K(x,t,H_1, H_2) \text{.}
	\end{align}
	The equality of the interpolation spaces and their norms hence follows. 
\end{proof}

\stepcounter{appendix}
\section{Appendix: Direct sums of Banach spaces} \label{appendix direct sums}


A closer look at Definition \ref{def ell spaces} reveals that the $\ell^{(\mu, \theta, r)}$-norms are built up from two nested norms: The inner part \eqref{formula power space norm inner part} of the $(\mu,\theta,r)$-norm originates from an $\ell^2(\mu^{-\theta})$-norm taken over the restricted indices in $M_j$, while the outer part in \eqref{formula power space norm} corresponds to the $\ell^r$-norm taken over the values given by the inner part. 

To deal with these norms, let us introduce such nested norms in a more general setting. 
For notational convenience, let us assume that $I = \{1,...,n\}$ or $I = \naturals$ is an index set. 

\begin{definition} \label{def direct sum}
	Let $I$ be an index set and suppose that $E_i$ are Banach spaces for all $i \in I$. 
	Then for $1 \leq p \leq \infty$ we define the $p$-direct sum of the $E_i$ as 
	\begin{align}
		{\bigoplus_{i \in I}}^{(p)} E_i
		:= \left\{ x=(x_i)_{i \in I} \, \middle| \, x_i \in E_i \text{ and } \Vert x \Vert_p  < \infty \right\} \text{,}
	\end{align}
	where the norm is defined by $\Vert x \Vert_p := \Vert ( \Vert x_i \Vert_{E_i} )_{i \in I} \Vert_{\ell^p}$. 
	We define the $0$-direct sum as 
	\begin{align}
		{\bigoplus_{i \in I}}^{(0)} E_i
		:= \left\{ x=(x_i)_{i \in I} \, \middle| \, x_i \in E_i \text{ and } \lim_{i \rightarrow \infty} \Vert x_i \Vert_{E_i} = 0 \right\}
	\end{align}
	\anmerkung{$I=\naturals$ auch hier nicht nötig, da Grenzwert=einziger Häufungspunkt}where the limit of a finite sequence is defined as $0$. We equip this space with the norm $\Vert x \Vert_{0} := \Vert x \Vert_{\infty}$. 
\end{definition}
It is well known that such $p$-direct sums of Banach spaces $E_i$ are again Banach spaces, see e.g. \cite[Section 16.11]{aliprantis_border}. 

\begin{remark} \label{remark absoulte values in sums}
	Let $E_i$ be Banach spaces for indexes $i \in I$, let $1 \leq p \leq \infty$, $C>0$ and suppose that $x_i' \in E_i'$ for all $i \in I$. 
	Then the following holds: 
	\begin{align}
		\sum_{i \in I} x_i'(x_i) \text{ converges for all } x \in {\bigoplus_{i \in I}}^{(p)} E_i \; &\Leftrightarrow \; \sum_{i \in I} |x_i'(x_i)| \text{ converges for all } x \in {\bigoplus_{i \in I}}^{(p)} E_i \\
		\sum_{i \in I} x_i'(x_i) \leq C \Vert x \Vert_p \text{ for all }  x \in {\bigoplus_{i \in I}}^{(p)} E_i \; &\Leftrightarrow \; \sum_{i \in I} |x_i'(x_i)| \leq C \Vert x \Vert_p \text{ for all }  x \in {\bigoplus_{i \in I}}^{(p)} E_i \\
		\sup_{\Vert  x \Vert_p \leq 1} \sum_{i \in I} x_i'(x_i) \; &= \; \sup_{\Vert  x \Vert_p \leq 1} \sum_{i \in I} |x_i'(x_i)|
	\end{align}
	Thereby, \enquote{$\Leftarrow$} and \enquote{$\leq$} follow directly. To see \enquote{$\Rightarrow$} and \enquote{$\geq$}, we choose $\tilde{x}_i:= x_i$ if $x_i'(x_i) \geq 0$ and $\tilde{x}_i:= -x_i$ otherwise. Then we find that $x_i'(\tilde{x}_i) = |x_i'(x_i)|$, and we still have $\Vert \tilde{x} \Vert_p = \Vert x \Vert_p$. Hence using $\tilde{x}$ instead of $x$ on the left-hand side gives the right-hand side. 
	
	In addition, note that due to 
	\begin{align}
		\sum_{i \in I} x_i'(x_i) \leq \left| \sum_{i \in I} x_i'(x_i) \right| \leq \sum_{i \in I} | x_i'(x_i) |
	\end{align}
	we might replace one of the sums by $\left| \sum_{i \in I} x_i'(x_i) \right|$ and still obtain the equivalences. 
\end{remark}

\begin{lemma} \label{lemma dual of direct sum} \label{lemma dual of infty direct sum}
	Let $I$ be an at most countable index set and suppose that $E_i$ are Banach spaces for all $i \in I$. 
	Then the mapping 
	\begin{align} \label{formula isometry or isometric isomorphism to dual}
		{\bigoplus_{i \in I}}^{(p')} E_i' \rightarrow \left( {\bigoplus_{i \in I}}^{(p)} E_i \right)', 
		\quad 
		x' \mapsto \left( x \mapsto \sum_{i \in I} x'_i (x_i) \right)
	\end{align}
	is 
	\begin{enumerate}
		\item an isometric isomorphism if $p=0$ and $p'=1$, 
		\item an isometric isomorphism if $1 \leq p < \infty$ with $\frac{1}{p} + \frac{1}{p'} = 1$, and 
		\item an isometry if $p=\infty$ and $p'=1$. 
	\end{enumerate}
\end{lemma}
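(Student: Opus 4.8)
The plan is to handle well-definedness and the upper norm bound uniformly across all three cases, then establish the isometry (the lower norm bound) by a coordinatewise norming construction, and finally prove surjectivity separately in cases (i) and (ii), where finitely supported elements are dense.

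First I would check that the map is well defined and norm-decreasing. For $x' \in {\bigoplus_{i \in I}}^{(p')} E_i'$ and $x \in {\bigoplus_{i \in I}}^{(p)} E_i$, applying the scalar Hölder (respectively $\ell^1$--$\ell^\infty$) inequality to the sequences $(\Vert x_i' \Vert_{E_i'})_{i}$ and $(\Vert x_i \Vert_{E_i})_{i}$ gives
\[ \sum_{i \in I} |x_i'(x_i)| \leq \sum_{i \in I} \Vert x_i' \Vert_{E_i'} \Vert x_i \Vert_{E_i} \leq \Vert x' \Vert_{p'} \Vert x \Vert_{p} \text{,} \]
so the defining series converges absolutely (cf.\ Remark \ref{remark absoulte values in sums}) and the induced functional $T_{x'}$ has norm at most $\Vert x' \Vert_{p'}$. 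Linearity of $x' \mapsto T_{x'}$ is immediate.

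Next comes the lower bound $\Vert T_{x'} \Vert \geq \Vert x' \Vert_{p'}$. Fix $\epsilon>0$ and choose, for each $i$, a vector $y_i \in E_i$ with $\Vert y_i \Vert_{E_i} \leq 1$ and $x_i'(y_i) \geq (1-\epsilon)\Vert x_i' \Vert_{E_i'}$. For $1<p<\infty$ I would set
\[ x_i := \Vert x_i' \Vert_{E_i'}^{\,p'-1}\, y_i \text{,} \]
and use the identity $(p'-1)p=p'$: this yields $\Vert x \Vert_{p} \leq \Vert x' \Vert_{p'}^{\,p'/p}$ while $T_{x'}(x) \geq (1-\epsilon)\Vert x' \Vert_{p'}^{\,p'}$, so dividing gives $\Vert T_{x'} \Vert \geq (1-\epsilon)\Vert x' \Vert_{p'}$. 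For $p=1$ (hence $p'=\infty$) I instead support $x$ at a single index where $\Vert x_i' \Vert_{E_i'}$ is nearly the supremum; for $p=0$ and $p=\infty$ (hence $p'=1$) I take a finite set $J$ capturing most of $\sum_{i} \Vert x_i' \Vert_{E_i'}$ and set $x_i=y_i$ on $J$ and $0$ elsewhere, which lies in ${\bigoplus_{i \in I}}^{(0)} E_i$ respectively ${\bigoplus_{i \in I}}^{(\infty)} E_i$ with norm at most $1$. Letting $\epsilon \to 0$ yields equality of norms in all three cases, so the map is always an isometry; this already proves (iii).

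For surjectivity in (i) and (ii), I would start from $F \in \big({\bigoplus_{i \in I}}^{(p)} E_i\big)'$ and define $x_i' \in E_i'$ by $x_i'(y) := F(\iota_i y)$, where $\iota_i$ is the canonical isometric inclusion of $E_i$ onto the $i$-th coordinate; then $\Vert x_i' \Vert_{E_i'} \leq \Vert F \Vert$. By linearity, $F$ agrees with $T_{x'}$ on finitely supported elements, and since these are dense precisely when $p=0$ or $1 \leq p < \infty$, we get $F=T_{x'}$ once we know $x' \in {\bigoplus_{i \in I}}^{(p')} E_i'$. The membership follows from the same finite-set test-vector construction, read in the opposite direction: for each finite $J$ the quantity $\big(\sum_{i\in J}\Vert x_i' \Vert_{E_i'}^{\,p'}\big)^{1/p'}$ (or $\sum_{i\in J}\Vert x_i' \Vert_{E_i'}$ when $p'=1$, while $p'=\infty$ is already covered by the uniform bound $\Vert x_i' \Vert_{E_i'}\leq\Vert F\Vert$) is bounded by $\Vert F \Vert/(1-\epsilon)$; taking the supremum over $J$ and $\epsilon \to 0$ gives $\Vert x' \Vert_{p'} \leq \Vert F \Vert < \infty$, and the isometry then forces $\Vert x' \Vert_{p'}=\Vert F \Vert$. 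I expect the main obstacle to be the bookkeeping in this norming construction, where the scalar extremal weights $\Vert x_i' \Vert_{E_i'}^{\,p'-1}$ must be combined with the per-coordinate $\epsilon$-norming vectors $y_i$ while verifying that the resulting $x$ genuinely lies in the correct direct sum. The conceptual crux is that surjectivity hinges on density of finitely supported sequences, which holds for the $c_0$-type sum $p=0$ and for $1\leq p<\infty$ but fails for $p=\infty$; this is exactly why (iii) asserts only an isometry rather than an isometric isomorphism.
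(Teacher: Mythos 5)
Your proof is correct, but it is organised quite differently from the paper's. The paper only proves part \emph{(iii)} from scratch: well-definedness via the same H\"older-type estimate you use, and then the isometry by a direct interchange of supremum and sum,
\begin{align}
	\sup_{\Vert x \Vert_{\infty}\leq 1} \left| \sum_{i \in I} x'_i (x_i) \right|
	= \sup_{\substack{\Vert x_i \Vert_{E_i} \leq 1 \\ \text{for all } i \in I}}  \sum_{i \in I} x'_i (x_i)
	= \sum_{i \in I} \sup_{\Vert x_i \Vert_{E_i} \leq 1 } x'_i (x_i)
	= \Vert x' \Vert_1 \text{,}
\end{align}
justified by Remark \ref{remark absoulte values in sums}; parts \emph{(i)} and \emph{(ii)} are simply cited from the literature (Aliprantis--Border, Theorem 16.49 and the following remark). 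You instead give a self-contained proof of all three parts: your $\epsilon$-norming vectors $y_i$ with the weights $\Vert x_i'\Vert_{E_i'}^{\,p'-1}$ are essentially the coordinatewise optimisation the paper performs implicitly in the displayed sup-sum swap, but carried out uniformly for all exponents, and your surjectivity argument (define $x_i' := F \circ \iota_i$, bound $\Vert x'\Vert_{p'}$ by testing $F$ against finitely supported norming vectors, then use density of finitely supported sequences for $p=0$ and $1\leq p<\infty$) replaces the citation entirely. What the paper's route buys is brevity -- only the case it actually needs beyond the standard references is worked out; what your route buys is self-containedness and a transparent explanation of why \emph{(iii)} is only an isometry: density of finitely supported sequences, the sole ingredient of surjectivity, is exactly what fails for $p=\infty$. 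The only points deserving a little care in your write-up are the degenerate cases ($x'=0$, or $\Vert x_i'\Vert_{E_i'}=0$ for some $i$, where one takes $y_i=0$), and the fact that the norming choice of $y_i$ uses real scalars, which is the paper's setting.
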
	
\begin{proof} \labelirgendwo{proof lemma dual of infty direct sum}
	A proof of (i) and (ii) can be found in \cite[Theorem 16.49 and the following remark]{aliprantis_border}\anmerkung{Nach dem Theorem kommen die Fälle $p=1$ und $p=0$}.  
	Let us prove (iii) here. 
	We begin by showing that the operator is well-defined in the sense that the sum converges. In fact, we even have absolute convergence, as for  $x' \in {\bigoplus_{i \in I}}^{(1)} E_i'$ and $x \in {\bigoplus_{i \in I}}^{(\infty)} E_i$, we find that 
	\begin{align}
		\sum_{i \in I} | x'_i (x_i) |
		\leq \sum_{i \in I} \Vert x_i' \Vert_{E_i'} \Vert x_i \Vert_{E_i}
		\leq \Vert x' \Vert_{1} \Vert x \Vert_{\infty}
		< \infty \text{.}
	\end{align}
	To show that \eqref{formula isometry or isometric isomorphism to dual} is an isometry, let $x' \in {\bigoplus_{i \in I}}^{(1)} E_i'$. 
	We have to show that the operator norm of 
	\begin{align}
		{\bigoplus_{i \in I}}^{(\infty)} E_i \rightarrow \reals, 
		\quad
		x \mapsto \sum_{i \in I} x'_i (x_i)
	\end{align}
	agrees with $\Vert x' \Vert_{1}$. 
	To this end, note that
	\begin{align}
		\left\Vert x \mapsto \sum_{i \in I} x'_i (x_i) \right\Vert
		= \sup_{\Vert x \Vert_{\infty}\leq 1} \left| \sum_{i \in I} x'_i (x_i) \right| 
		&= \sup_{\substack{\Vert x_i \Vert_{E_i} \leq 1 \\ \text{for all } i \in I}}  \sum_{i \in I} x'_i (x_i)  \\
		&= \sum_{i \in I} \sup_{\Vert x_i \Vert_{E_i} \leq 1 } x'_i (x_i) \\
		&= \sum_{i \in I} \Vert x_i' \Vert \\
		&= \Vert x' \Vert_1 \text{.}
	\end{align}
	Concerning the absolute values, we used Remark \ref{remark absoulte values in sums}. 
	This shows that the mapping in question is in fact an isometry. 
\end{proof}

For $p$ and $p'$ as in Lemma \ref{lemma dual of direct sum}, we introduce for $x \in {\bigoplus_{i \in I}}^{(p)} E_i$ and $x' \in {\bigoplus_{i \in I}}^{(p')} E_i'$ the dual pairing 
\begin{align}
	\skalarprodukt{x'}{x} := \sum_{i \in I} x'_i (x_i) \text{.}
\end{align}
Note that in situations (i) and (ii), Lemma \ref{lemma dual of direct sum} allows us to describe the entire dual space. 
However, by part (iii) we can at least understand part of the dual space in a similar fashion as we can understand part of $(\ell^{\infty}(I))'$ by the embedding $\iota: \ell^1(I) \rightarrow \ell^1(I)'' \simeq (\ell^{\infty}(I))'$. 

\begin{lemma} \label{lemma finite norm by dual evaluation}
	Let $I$ be an at most countable index set and suppose that $E_i$ are Banach spaces for all $i \in I$. Let $x'_i \in E_i'$ for all $i \in I$ and suppose that $1 \leq p,p' \leq \infty$ with $\frac{1}{p}+\frac{1}{p'}=1$. 
	Then the following are equivalent: 
	\begin{enumerate}
		\item The sum 
		\begin{align} \label{formula sum over functionals}
			\sum_{i \in I} x'_i(x_i)
		\end{align}
		converges for all sequences $x \in \bigoplus_{i \in I}^{(p)} E_i$. 
		\item The sum \eqref{formula sum over functionals} converges absolutely for all sequences $x \in \bigoplus_{i \in I}^{(p)} E_i$. 
		\item We have $x' \in {\bigoplus_{i \in I}}^{(p')} E_i'$. 		
	\end{enumerate}
\end{lemma}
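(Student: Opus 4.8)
The plan is to prove the cycle \emph{(iii)} $\Rightarrow$ \emph{(ii)} $\Rightarrow$ \emph{(i)} $\Rightarrow$ \emph{(iii)}, doing the first two implications by hand and concentrating all the effort on the last one. The implication \emph{(ii)} $\Rightarrow$ \emph{(i)} is immediate, as absolute convergence implies convergence. For \emph{(iii)} $\Rightarrow$ \emph{(ii)} I would simply apply Hölder's inequality to the scalar sequences of norms: if $x' \in \bigoplus_{i \in I}^{(p')} E_i'$, then for any $x \in \bigoplus_{i \in I}^{(p)} E_i$ we have
\begin{align}
	\sum_{i \in I} |x_i'(x_i)|
	\leq \sum_{i \in I} \Vert x_i' \Vert_{E_i'} \Vert x_i \Vert_{E_i}
	\leq \Vert x' \Vert_{p'} \Vert x \Vert_{p} < \infty \text{,}
\end{align}
which is exactly absolute convergence. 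It then remains to prove \emph{(i)} $\Rightarrow$ \emph{(iii)}, where I would freely use that \emph{(i)} and \emph{(ii)} are equivalent by Remark \ref{remark absoulte values in sums}.

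So assume \emph{(i)}. Then the linear functional $T(x) := \sum_{i \in I} x_i'(x_i)$ is defined on all of the Banach space $\bigoplus_{i \in I}^{(p)} E_i$. The crucial first step is to upgrade this pointwise statement to boundedness of $T$. Taking $I = \naturals$ (the finite case being trivial), I would introduce the truncated functionals $T_n(x) := \sum_{i=1}^n x_i'(x_i)$, each of which is bounded as a finite sum of bounded functionals. By \emph{(i)} the scalar sequence $(T_n(x))_n$ converges, hence is bounded, for every $x$. Since the domain is complete, the uniform boundedness principle then yields $M := \sup_n \Vert T_n \Vert < \infty$, and passing to the limit gives $|T(x)| \leq M \Vert x \Vert_p$, so $T$ is bounded with norm at most $M$.

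With boundedness in hand, I would finish by identifying the coefficient sequence, splitting into two cases along Lemma \ref{lemma dual of direct sum}. For $1 \leq p < \infty$, part \emph{(ii)} of that lemma says the representation map $\bigoplus_{i \in I}^{(p')} E_i' \to (\bigoplus_{i \in I}^{(p)} E_i)'$ is an isometric isomorphism, so $T$ is represented by some $y' = (y_i') \in \bigoplus_{i \in I}^{(p')} E_i'$; testing $T$ against sequences supported in a single coordinate shows $y_i' = x_i'$ for every $i$, whence $x' = y' \in \bigoplus_{i \in I}^{(p')} E_i'$. For $p = \infty$ (so $p' = 1$), Lemma \ref{lemma dual of direct sum} provides only an isometry and not a surjection, so this shortcut is unavailable and I would argue directly: for each finite $N$, taking the supremum over coordinatewise norm-one vectors supported on $\{1, \dots, N\}$ (extended by zeros) gives
\begin{align}
	\sum_{i=1}^{N} \Vert x_i' \Vert_{E_i'}
	= \sup \left\{ \sum_{i=1}^N x_i'(x_i) \, : \, \Vert x_i \Vert_{E_i} \leq 1 \right\}
	\leq \Vert T \Vert \leq M \text{,}
\end{align}
exactly as in the computation proving Lemma \ref{lemma dual of direct sum}\emph{(iii)}. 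Letting $N \to \infty$ yields $\sum_{i \in I} \Vert x_i' \Vert_{E_i'} \leq M$, that is $x' \in \bigoplus_{i \in I}^{(1)} E_i'$.

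The main obstacle is the passage from the purely pointwise hypothesis \emph{(i)} to a uniform norm bound on $T$, which is precisely where completeness of the direct sum and the uniform boundedness principle are indispensable. A close second is the endpoint $p = \infty$: there the absence of a duality isomorphism forces the explicit truncation estimate above rather than a clean appeal to a representation theorem, so this case must be handled separately.
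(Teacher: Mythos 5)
Your proof is correct, and it shares the paper's engine: truncated partial-sum functionals, the uniform boundedness principle on the Banach space $\bigoplus_{i \in I}^{(p)} E_i$, and a case split at $p=\infty$ resolved via Lemma \ref{lemma dual of direct sum}. Still, there are two genuine differences in how the pieces are assembled. First, you run the cycle \emph{(iii)} $\Rightarrow$ \emph{(ii)} $\Rightarrow$ \emph{(i)} $\Rightarrow$ \emph{(iii)} and get pointwise boundedness of the truncations $T_n$ directly from convergence of the partial sums in \emph{(i)}; consequently you never actually need Remark \ref{remark absoulte values in sums}, whereas the paper first establishes \emph{(i)} $\Leftrightarrow$ \emph{(ii)} and feeds \emph{absolute} convergence into the uniform boundedness argument. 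Second, you use Lemma \ref{lemma dual of direct sum} differently: for $1 \leq p < \infty$ you invoke its surjectivity to represent the limit functional $T$ by some $y' \in \bigoplus_{i \in I}^{(p')} E_i'$ and then identify $y_i' = x_i'$ by testing sequences supported in one coordinate, while the paper uses only the isometry, applied to each truncation $S_n$, to obtain the exact identity $\Vert S_n \Vert = \Vert (x_i' \mathbbm{1}_{i \leq n})_{i \in I} \Vert_{p'}$ and then takes $\sup_n$. Your route is shorter for $p < \infty$, at the price of relying on the full representation theorem; the paper's route treats both cases through the single formula $\Vert x' \Vert_{p'} = \sup_n \Vert S_n \Vert$ and produces a quantitative norm identity along the way, which is the same spirit in which Proposition \ref{proposition finite norm by dual evaluation with constants} is later derived. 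For $p = \infty$, your explicit supremum over finitely supported, coordinatewise norm-one vectors is a correct, more hands-on substitute for the paper's detour through the $0$-direct sum and the isometric isomorphism $({\bigoplus_{i \in I}}^{(0)} E_i)' \simeq {\bigoplus_{i \in I}}^{(1)} E_i'$; the two computations are the same in substance.
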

\begin{proof} \labelirgendwo{proof lemma finite norm by dual evaluation}
	For finite $I$, all norms are finite sums, hence the assertion follows. So let us assume $I=\naturals$ without loss of generality. 
	
	\enquote{(i) $\Leftrightarrow$ (ii)} follows by Remark \ref{remark absoulte values in sums}. 
	
	\enquote{(iii) $\Rightarrow$ (i)}. Let $x' \in {\bigoplus_{i \in I}}^{(p')} E_i'$. Then an application of Lemma \ref{lemma dual of direct sum} yields that the functional 
	\begin{align}
		\gamma: {\bigoplus_{i \in I}}^{(p)} E_i \rightarrow \reals, 
		\quad x \mapsto \sum_{i \in I} x'_i(x_i)
	\end{align}
	is well-defined with $\Vert \gamma \Vert = \Vert x' \Vert_{p'}$. 
	In particular, all sums in \eqref{formula sum over functionals} converge. 

	\enquote{(ii) $\Rightarrow$ (iii)}. 
	For $n \in \naturals$, let us consider the bounded operator 
	\begin{align}
		S_n : {\bigoplus_{i \in I}}^{(p)} E_i \rightarrow \reals, 
		\quad x \mapsto \sum_{i=1}^n x'_i (x_i) \text{.}
	\end{align}
	Notice that for all $x \in  {\bigoplus_{i \in I}}^{(p)} E_i$, it holds that 
	\begin{align}
		\sup_{n \in \naturals} \vert S_n (x) \vert
		= \sup_{n \in \naturals} \left\vert \sum_{i=1}^n x'_i (x_i) \right\vert
		\leq \sup_{n \in \naturals} \sum_{i=1}^n |x'_i (x_i)|
		\leq \sum_{i=1}^{\infty} |x'_i (x_i)|
		< \infty \text{,}
	\end{align}
	where we used the absolute convergence of \eqref{formula sum over functionals}. 
	Hence the uniform boundedness principle \cite[Theorem 4.52]{rynne_youngson} yields that $\sup_{n \in \naturals} \Vert S_n \Vert < \infty$. 
	
	Our next goal is to show 
	\begin{align}
		\Vert (x'_i \mathbbm{1}_{i \leq n})_{i \in I} \Vert_{p'} = \Vert S_n \Vert \text{.}
	\end{align}
	
	Case $1\leq p < \infty$. 
	By Lemma \ref{lemma dual of direct sum}, we can understand $S_n \in \left( \bigoplus_{i \in I}^{(p)} E_i \right)'$ as the functional associated to the sequence $(x_i \mathbbm{1}_{i \leq n})_{i \in I}$. Hence 
	$
	\Vert S_n \Vert 
	= \Vert (x'_i \mathbbm{1}_{i \leq n})_{i \in I} \Vert_{p'}
	$ as desired. 
	
	Case $p = \infty$. Now we have $S_n \in \left( \bigoplus_{i \in \naturals}^{(\infty)} E_i \right)'$. As Lemma \ref{lemma dual of direct sum} does not directly cover this dual space, we argue differently here. 
	To this end, note that for the operator norm we find the following: \anmerkung{Letztendlich nutzt man, dass $S_n$ im Bild von $\iota$ (Einbettung ins Bidual) liegt}
	\begin{align}
		\Vert S_n \Vert
		= \sup_{\substack{x \in \bigoplus_{i \in \naturals}^{(\infty)} E_i \\ x \neq 0}} \frac{|S_n x|}{\Vert x \Vert_{\infty}}
		= \sup_{\substack{x \in \bigoplus_{i \in \naturals}^{(\infty)} E_i \\ x \neq 0}} \frac{|S_n ((x_i \mathbbm{1}_{i \leq n})_{i \in \naturals}|}{\Vert x \Vert_{\infty}}
		= \sup_{\substack{x \in \bigoplus_{i \leq n}^{(0)} E_i \\ x \neq 0}} \frac{|S_n x|}{\Vert x \Vert_{\infty}} 
	\end{align}
	This means that the operator norm of $S_n$ equals the operator norm of $S_n$ restricted to ${\bigoplus_{i \in \naturals}}^{(0)} E_i$. 
	This restricted operator is an element of $({\bigoplus_{i \in \naturals}}^{(0)} E_i)' \simeq {\bigoplus_{i \in \naturals}}^{(1)} E_i'$, where we are using the isometric isomorphism from Lemma \ref{lemma dual of direct sum}.  
	This isometric isomorphism assigns $S_n |_{\bigoplus_{i \leq n}^{(0)} E_i}$ to the sequence $(x'_i \mathbbm{1}_{i \leq n})_{i \in I} \in \bigoplus_{i \leq n}^{(1)} E_i'$. 
	We hence find the equality of norms 
	\begin{align}
		\Vert S_n \Vert 
		= \left\Vert S_n |_{\bigoplus_{i \leq n}^{(0)} E_i} \right\Vert
		= \left\Vert (x'_i \mathbbm{1}_{i \leq n})_{i \in I} \right\Vert_{1} 
	\end{align}
	which finishes the second case. 
	
	Finally, we can put together the previous findings to prove that 
	\begin{align}
		\Vert x' \Vert_{p'} 
		= \left( \sum_{i \in \naturals} \Vert x'_i \Vert^{p'} \right)^{\frac{1}{p'}} 
		&= \sup_{n \in \naturals} \left( \sum_{i \leq n} \Vert x'_i \Vert^{p'} \right)^{\frac{1}{p'}} \\
		&= \sup_{n \in \naturals} \Vert (x'_i \mathbbm{1}_{i \leq n})_{i \in \naturals} \Vert_{p'} \\
		&= \sup_{n \in \naturals} \Vert S_n \Vert
		< \infty
	\end{align}
	as desired. 
\end{proof}
Lemma \ref{lemma finite norm by dual evaluation} does not provide concrete bounds on \eqref{formula sum over functionals} or on $\Vert x'\Vert_{p'}$. However, in combination with the previous results, we can derive a quantitative version directly from Lemma \ref{lemma finite norm by dual evaluation}. 

\begin{proposition}\label{proposition finite norm by dual evaluation with constants}
	Let $I$ be an at most countable index set and suppose that $E_i$ are Banach spaces for all $i \in I$. Let $x'_i \in E_i'$ for all $i \in I$ and suppose that $1 \leq p,p' \leq \infty$ with $\frac{1}{p}+\frac{1}{p'}=1$. Let $C \geq 0$. 
	Then the following are equivalent: 
	\begin{enumerate}
		\item For all sequences $x \in {\bigoplus_{i \in I}}^{(p)} E_i$, the sum $\sum_{i \in I} x'_i(x_i)$ converges and it holds that 
		\begin{align}
			\left| \sum_{i \in I} x'_i(x_i) \right| \leq C \Vert x \Vert_p \text{.}
		\end{align}
		\item It holds that $x' \in {\bigoplus_{i \in I}}^{(p')} E_i'$ with $\Vert x' \Vert_{p'} \leq C$. 
	\end{enumerate}
\end{proposition}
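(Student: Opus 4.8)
The plan is to read off the quantitative bound from the isometry built into Lemma~\ref{lemma dual of direct sum}, using Lemma~\ref{lemma finite norm by dual evaluation} only to secure that $x'$ lies in the relevant direct sum in the first place. The central point is that, once the summation functional
\begin{align}
	\gamma: {\bigoplus_{i \in I}}^{(p)} E_i \rightarrow \reals, \quad x \mapsto \sum_{i \in I} x'_i(x_i)
\end{align}
is well defined and bounded, Lemma~\ref{lemma dual of direct sum} identifies its operator norm \emph{exactly} as $\Vert \gamma \Vert = \Vert x' \Vert_{p'}$. Indeed, for $1 \leq p < \infty$ the map \eqref{formula isometry or isometric isomorphism to dual} is an isometric isomorphism, and for $p = \infty$ it is still an isometry, so in every case $1 \leq p \leq \infty$ this norm equality holds. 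The whole proof then reduces to relating the constant $C$ in (i) to this operator norm.

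For \enquote{(ii) $\Rightarrow$ (i)}, I would start from $x' \in {\bigoplus_{i \in I}}^{(p')} E_i'$ with $\Vert x' \Vert_{p'} \leq C$. By Lemma~\ref{lemma dual of direct sum}, $\gamma$ is a well-defined bounded functional with $\Vert \gamma \Vert = \Vert x' \Vert_{p'} \leq C$; in particular, $\sum_{i \in I} x'_i(x_i)$ converges for every $x$, and
\begin{align}
	\left| \sum_{i \in I} x'_i(x_i) \right| = |\gamma(x)| \leq \Vert \gamma \Vert \, \Vert x \Vert_p \leq C \Vert x \Vert_p \text{,}
\end{align}
which is precisely (i).

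For the converse \enquote{(i) $\Rightarrow$ (ii)}, the first step is to observe that (i) in particular asserts convergence of $\sum_{i \in I} x'_i(x_i)$ for all $x \in {\bigoplus_{i \in I}}^{(p)} E_i$. Hence Lemma~\ref{lemma finite norm by dual evaluation} applies and yields the membership $x' \in {\bigoplus_{i \in I}}^{(p')} E_i'$. This makes $\gamma$ a bounded functional, so Lemma~\ref{lemma dual of direct sum} gives $\Vert x' \Vert_{p'} = \Vert \gamma \Vert$. Finally, the estimate in (i) says exactly that $|\gamma(x)| \leq C \Vert x \Vert_p$ for all $x$, i.e. $\Vert \gamma \Vert \leq C$, and combining the two identities we obtain $\Vert x' \Vert_{p'} \leq C$, which is (ii).

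The only point requiring a little care---and the closest thing to an obstacle---is the case $p = \infty$, $p' = 1$, where Lemma~\ref{lemma dual of direct sum} provides merely an isometry rather than a surjective isometric isomorphism onto the full dual. This does not affect the argument: membership of $x'$ in ${\bigoplus_{i \in I}}^{(1)} E_i'$ is delivered separately by Lemma~\ref{lemma finite norm by dual evaluation}, and the isometry property alone suffices to equate $\Vert \gamma \Vert$ with $\Vert x' \Vert_{1}$, which is all that the norm bookkeeping requires.
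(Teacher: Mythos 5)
Your proposal is correct and follows essentially the same route as the paper's own proof: membership of $x'$ in ${\bigoplus_{i \in I}}^{(p')} E_i'$ is obtained from Lemma \ref{lemma finite norm by dual evaluation}, and the quantitative bound then comes from the isometric identification $\Vert \gamma_{x'} \Vert = \Vert x' \Vert_{p'}$ supplied by Lemma \ref{lemma dual of direct sum}, with the $p=\infty$ case handled exactly as you describe (the isometry alone suffices). Your handling of the norm bookkeeping via $\Vert \gamma \Vert = \sup_{\Vert x \Vert_p \leq 1} |\gamma(x)| \leq C$ is even slightly more direct than the paper's detour through Remark \ref{remark absoulte values in sums}.
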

\begin{proof}
	At first, let $x' \in \bigoplus_{i \in \naturals}^{(p')} E_i'$.	By the isometric embeddings from Lemma \ref{lemma dual of direct sum}, the functional 
	\begin{align}
		\gamma_{x'}: {\bigoplus_{i \in \naturals}}^{(p)} E_i \rightarrow \reals, 
		\quad x \mapsto \sum_{i \in I} x_i'(x_i)
	\end{align}
	satisfies $\Vert \gamma_{x'} \Vert = \Vert x' \Vert_{p'}$. 
	
	\enquote{(i) $\Rightarrow$ (ii)}. An application of Lemma \ref{lemma finite norm by dual evaluation} (i) $\Rightarrow$ (iii) shows that $x' \in \bigoplus_{i \in \naturals}^{(p')} E_i'$. Hence using Remark \ref{remark absoulte values in sums} we find that 
	\begin{align}
		\Vert x' \Vert_{p'}
		= \Vert \gamma_{x'} \Vert
		= \sup_{\Vert x \Vert_p \leq 1} \sum_{i \in I} x_i'(x_i)
		= \sup_{\Vert x \Vert_p \leq 1} \left| \sum_{i \in I} x_i'(x_i) \right|
		\leq C \text{.}
	\end{align}
	\enquote{(ii) $\Rightarrow$ (i)}. The convergence of the sum follows by Lemma \ref{lemma finite norm by dual evaluation}. 
	For $x=0$, the desired estimate is clear. For $x \neq 0$, we observe that 
	\begin{align}
		\left| \frac{\sum_{i \in I} x_i'(x_i)}{\Vert x \Vert_p} \right|
		= \left| \sum_{i \in I} x_i'\left( \frac{x_i}{\Vert x \Vert_p} \right) \right|
		\leq \sup_{\Vert \tilde{x}  \Vert_p \leq 1} \left| \sum_{i \in I} x_i'(\tilde{x}_i)  \right|
		= \Vert \gamma_{x'} \Vert
		= \Vert x' \Vert_{p'}
		\leq C
	\end{align}
	and note that the assertion follows by treating absolute values as explained in Remark \ref{remark absoulte values in sums}. 
\end{proof}

\stepcounter{appendix}
\section{Appendix: Semicontinuous functions}

Let $(X,\mathcal{T})$ be a topological space. Then a function $g: X \rightarrow [0,\infty]$ is called lower semicontinuous (l.s.c.) if and only if the sets $g^{-1} ((a,\infty])$ are open in $X$ for all $a \geq 0$. 

\begin{lemma} \label{lemma non-negative lower semicontinuous}
	Let $(X,\mathcal{T})$ be a topological space and suppose that $g_i:X \rightarrow [0,\infty]$ are l.s.c. functions for all $i$ in an index set $I$. 
	Then the following functions $X \rightarrow [0,\infty]$ are l.s.c.: 
	\begin{enumerate}
		\item $\sup_{i \in I} g_i$ 
		\item $\sum_{i \in I} g_i$ 
		\item $g_1 \cdot g_2$ 
		\item $g_1^\alpha$ for all $\alpha>0$ 
	\end{enumerate}
\end{lemma}
\begin{proof}
	
	\ada{i} Non-negativity follows directly, for lower semicontinuity, see \cite[§6.2 Proposition 2]{Bourbaki_topology}. \anmerkung{Definition reellwertige Funktion mit $\infty$ §5.1}
	
	\ada{ii} Non-negativity follows directly. By \cite[§6.2 Proposition 2]{Bourbaki_topology}, finite sums of l.s.c. functions are l.s.c. The property follows for infinite sums as $\sum_{i \in I} g_i(x) = \sup_{j \in I} \sum_{i=1}^j g_i(x)$ holds for non-negative functions. 
	
	\ada{iii} Non-negativity follows directly, for lower semicontinuity, see \cite[§6.2 Proposition 2]{Bourbaki_topology}. 
	
	\ada{iv} This follows from the fact that for all $a \geq 0$  we have 
	\begin{align*}
		\{x \in X \, | \, g_1(x)^\alpha > a\} = \{x \in X \, | \, g_1(x) > a^{\frac 1\alpha}\} \text{,}
	\end{align*}
	which is an open set. 
\end{proof}

\begin{lemma} \label{lemma sup is esssup}
	Let $(X,\mathcal{T})$ be a topological space and $(X,\mathcal{A},\nu)$ be a measure space such that $\nu$ is $\mathcal{T}$-positive. Then for all l.s.c. functions $g: X \rightarrow [0,\infty]$ it holds that 
	\begin{align}
		\essup_{x \in X} g(x) = \sup_{x \in X} g(x) \text{.}
	\end{align} 
\end{lemma}
\begin{proof}
	Assume  the converse, that means $\essup_{x \in X} g(x) < \sup_{x \in X} g(x)$. It follows that there are $\varepsilon>0$ and $y \in X$ such that the set 
	\begin{align}
		M := \{ y \in X \, |  \, g(y) - \essup_{x \in X} g(x) \in (\varepsilon, \infty]\}
	\end{align}
	is not empty. Additionally, $M$ is open as $g$ is l.s.c. 
	By the positivity of $\nu$, we find that $\nu(M)>0$. Hence $g > \essup_{x \in X} g(x)$ holds on the set of positive measure $M$, a contradiction. 
\end{proof}

%

\begin{corollary} \label{corollary L infty = L infty}
	Let $(X,\mathcal{T})$ be a topological space and $(X,\mathcal{A},\nu)$ be a measure space such that $\nu$ is $\mathcal{T}$-positive. Let $f:X \rightarrow \reals$ be continuous. 
	Then $\Vert f \Vert_{\mathcal{L}^\infty (X)} = \Vert [f]_\sim \Vert_{\gL^\infty (\nu)}$. 
\end{corollary}
\begin{proof}
	Apply Lemma \ref{lemma sup is esssup} to the non-negative and $\mathcal{T}$-continuous function $|f|$. 
\end{proof}

\begin{lemma} \label{lemma Cb closed}
	Let $(X,\mathcal{T})$ be a topological space and $(X,\mathcal{A},\nu)$ be a measure space such that $\nu$ is $\mathcal{T}$-positive. 
	Then $\{ [f]_\sim \, | \, f \in \gC_b(X,\mathcal{T})\}$ is a closed subspace of $\gL^\infty(\nu)$ which is isometric isomorphic to $\gC_b(X,\mathcal{T})$ by the mapping $f \mapsto [f]_\sim$ for $f \in \gC_b(X,\mathcal{T})$. 
\end{lemma}
\begin{proof}
	The fact that $\{ [f]_\sim \, | \, f \in \mathrm{C}_b(X,\mathcal{T})\}$ is a linear subspace of $\gL^\infty(\nu)$ follows from the definition. 
	It remains to prove that it is closed. We do so by proving completeness, which is equivalent to closedness. 
	To this end, let $(f_n)_{n \in \naturals}$ be a sequence of functions in $\gC_b (X,\mathcal{T})$ such that the $[f_n]_\sim$ form a Cauchy sequence in $\gL^\infty(\nu)$-norm. It follows by Corollary \ref{corollary L infty = L infty} that for $n,m \rightarrow \infty$, we have 
	\begin{align}
		\Vert f_n - f_m \Vert_{\mathcal{L}^\infty (X)}
		= \Vert [f_n - f_m]_\sim \Vert_{\gL^\infty (\nu)}
		= \Vert [f_n]_\sim - [f_m]_\sim \Vert_{\gL^\infty (\nu)}
		\rightarrow 0 \text{.}
	\end{align}
	As $\gC_b(X,\mathcal{T})$ with the $\mathcal{L}^\infty (X)$-norm is complete, there is a $f \in \gC_b(X,\mathcal{T})$ such that $\Vert f_n - f \Vert_{\mathcal{L}^\infty (X)} \rightarrow 0$. But this implies 
	\begin{align}
		\Vert [f_n]_\sim - [f]_\sim \Vert_{\gL^\infty (\nu)}
		=\Vert [f_n - f]_\sim \Vert_{\gL^\infty (\nu)}
		= \Vert f_n - f \Vert_{\mathcal{L}^\infty (X)} 
		\rightarrow 0
	\end{align}
	by another application of Corollary \ref{corollary L infty = L infty}. 
	This shows that $[f]_\sim$ is the desired limit of $([f_n]_\sim)_{n \in \naturals}$ in $\gL^\infty(\nu)$-norm. 
	
	Finally, as the $\mathcal{L}^\infty (X)$-norm of a function $f \in \gC_b(X, \tau(H))$ agrees with the $\gL^\infty(\nu)$-norm of $[f]_\sim$ by Corollary \ref{corollary L infty = L infty}, the map 
	\begin{align}
		\mathrm{C}_b(X,\mathcal{T}) \rightarrow \{ [f]_\sim \, | \, f \in \mathrm{C}_b(X,\mathcal{T})\}, 
		\quad f \mapsto [f]_\sim
	\end{align}
	is isometric, hence injective. Its surjectivity follows directly. 
\end{proof}

	\bibliographystyle{plain}
	\bibliography{../bib/bib.bib}

\end{document}